\theoremstyle{definition}
\def\be{\begin{eqnarray}}
\def\ee{\end{eqnarray}}
\tikzstyle{vertex}=[circle, draw, inner sep=0pt, minimum size=6pt]
\def\matZ{{\mathbb{Z}}}
\def\matR{{\mathbb{R}}}
\def\matQ{{\mathbb{Q}}}
\def\matC{{\mathbb{C}}}
\def\matN{{\mathbb{N}}}
\newcommand{\bT}{\mathsf{T}}
\newcommand{\bK}{\mathsf{K}}
\newcommand{\Lie}{\mathrm{Lie}}
\let\bs\boldsymbol
\newcommand{\sslash}{\mathbin{/\mkern-6mu/}}
\def\zz{{\bs z}}
\def\aa{{\bs a}}
\theoremstyle{definition}
\newtheorem{Definition}{Definition}
\newtheorem{Proposition}{Proposition}
\newtheorem{Lemma}{Lemma}
\newtheorem{Corollary}{Corollary}
\newtheorem{Conjecture}{Conjecture}
\newtheorem{Theorem}{Theorem}
\newtheorem{Note}{Note}
\newcommand{\fC}{\mathfrak{C}} % chamber
\newcommand{\somespecialrotate}[3][]{%
\begingroup
\sbox\@tempboxa{#3}%
\@tempdima=.5\wd\@tempboxa
\sbox\@tempboxa{\rotatebox[#1]{#2}{\usebox\@tempboxa}}%
\advance\@tempdima by -.5\wd\@tempboxa
\mbox{\hskip\@tempdima\usebox\@tempboxa}%
\endgroup}
\def\qm {{{\textnormal{\textsf{QM}}}}}
\def \vss {\widehat{{\O}}_{{\rm{vir}}}}
\def\zz{{\bs z}}
\def\dd{{\bs d}}
\def\O {{\mathcal{O}}}
\begin{document}
\title{Capped vertex with descendants for zero dimensional $A_{\infty}$ quiver varieties}
\author{H. Dinkins and A. Smirnov}
\date{}
\maketitle
\thispagestyle{empty}
	
\begin{abstract}
In this paper, we study the capped vertex functions associated to certain zero-dimensional type-$A$ Nakajima quiver varieties. The insertion of descendants into the vertex functions can be expressed by the Macdonald operators, which leads to explicit combinatorial formulas for the capped vertex functions. 

We determine the monodromy of the vertex functions and show that it coincides with the elliptic R-matrix of symplectic dual variety. We apply our results to give the vertex functions and the characters of the tautological bundles on the quiver varieties formed from  arbitrary stability conditions.
\end{abstract}
	
\setcounter{tocdepth}{2}

\section{Introduction}
Our main objects of study in this paper are certain $K$-theoretic enumerative invariants of Nakajima quiver varieties known as vertex functions, see Section 7 in \cite{pcmilect} and Section 2 below. Let $X$ be a Nakajima quiver variety, see \cite{GinzburgLectures}, \cite{Nak1}, or Section 2 in \cite{MO}  for an introduction. Vertex functions for $X$ come in two flavors, capped and uncapped. The vertex functions are defined by an equivariant count of quasimaps to $X$, and the type of vertex function is determined by whether the moduli space of quasimaps is considered with a nonsingular condition or a relative condition, see Section 2 below. 

One can consider either type of vertex function with descendants inserted, and these give a collection of natural classes in the $K$-theory of $X$. Starting with a vector bundle on $X$ and an evaluation map on the appropriate moduli space of quasimaps, we can pullback the vector bundle under the evaluation map to obtain a class on the quasimap moduli space. The insertion of a descendant into a vertex function refers to the quasimap count obtained by tensoring the structure sheaf of the quasimap moduli space with a class obtained in this way. The $K$-theory of Nakajima quiver varieties is generated by a collection of tautological vector bundles, one for each vertex of the quiver, and any of these gives rise to a descendant that can be inserted into a vertex function.

For quiver varieties arising from type-$A$ quivers, there are known procedures for computing the vertex functions as a power series in the K\"ahler parameters, see Section 1 in \cite{OkBethe}, Section 4.5 in \cite{Pushk1}, and Section 2.4 \cite{dinksmir2}. In this paper, we restrict our attention to the capped and uncapped vertex functions with descendants for zero-dimensional type-$A$ quiver varieties. Such varieties are indexed by partitions, and we denote them by $X_{\lambda}$ for a partition $\lambda$. 

Our main result is that the insertion of descendants into the uncapped vertex function can be realized by the action of the difference operators of the trigonometric Ruijsenaars-Schneider model (also known as the Macdonald difference operators). Let $\textbf{V}_{\lambda}(\zz)$ be the uncapped vertex function for $X_{\lambda}$, and let $\textbf{V}^{(\tau)}_{\lambda}(\zz)$ be the uncapped vertex function with descendant $\tau$, then the Theorem  \ref{tRS} reads:
\begin{equation}\label{eq}
T(\tau) \textbf{V}_{\lambda}(\zz)= \textbf{V}^{(\tau)}_{\lambda}(\zz)
\end{equation}
where $T(\tau)$ is the Macdonald difference operator associated with $\tau$.

In \cite{dinksmir2}, we proved
\be \label{vacvect}
\textbf{V}_{\lambda}(\zz)= \prod\limits_{\square \in \lambda} \prod\limits_{i=0}^{\infty} \dfrac{1-\hbar  z_{\square} q^{i}}{1- z_{\square} q^{i}}
\ee
where $z_{\square}$ denotes a certain monomial in the K\"ahler parameters depending on the box $\square$ in the Young diagram for $\lambda$, see (\ref{prform}) below. 

This result allows us to explicitly compute the vertex functions with descendant insertions. For instance, if $\mathcal{V}_n$ is the tautological vector bundle on $X_{\lambda}$ corresponding to $n$th vertex of the quiver and $\tau_{n,r}=\bigwedge^r \mathcal{V}_n$, then from  (\ref{eq}) and (\ref{vacvect}), we obtain the following rational function for the capped vertex function with descendant $\tau_{n,r}$:
$$
 \hat{\textbf{V}}_{\lambda}^{(\tau_{n,r})}(\zz)= \sum_{\substack{I \subset C_{\lambda}(n) \\ |I|=r }} \prod_{\substack{\square \in I \\ \square' \in C_{\lambda}(n)\setminus I}} \frac{\hbar \zeta_{\square'} -  \zeta_{\square}}{\zeta_{\square'} - \zeta_{\square}} \prod_{\square\in I}\prod_{\square' \in S_{\lambda}(\square)} \frac{1-z_{\square'}}{1-\hbar z_{\square'}}
$$
where $C_{\lambda}(n)$ and $S_{\lambda}(\square)$ denote certain subsets of boxes in $\lambda$, and $\zeta_{\square}$ are monomials in $\zeta_i$, related to the K\"ahler parameters by $z_i=\frac{\zeta_{i-1}}{\zeta_{i}}$.

In Sections 5 and 6, we give an application of our formula to quiver varieties with non-canonical stability conditions. In general, Nakajima quiver varieties are defined as geometric invariant theory quotients and thus depend on a choice of stability condition $\theta \in \Lie_{\mathbb{R}}(\bK)$, where $\bK$ is the K\"ahler torus $\bK:= (\mathbb{C}^\times)^{|I|}$ and $I$ is the vertex set of the quiver. The varieties $X_{\lambda}$ are determined by the positive stability condition, see (\ref{variety}) below. In the literature, explicit computations involving Nakajima quiver varieties almost always consider only the positive and negative stability conditions.

As the stability condition varies, the varieties obtained from them change only when crossing certain hyperplanes. This gives rise to a collection of cones in $\Lie_{\matR}(\bK)$. The toric compactification $\overline{\bK}$ of $\bK$ given by the fan generated by these cones is known as the K\"ahler moduli space. 

The vertex function of $X_{\lambda}$ is the solution of a $q$-difference equation (see \cite{OS}), and it is expected that the vertex functions of the varieties given by the same dimension data as $X_{\lambda}$, but with different stability conditions, solve the same $q$-difference equation. Furthermore, the vertex function corresponding to a choice of stability condition gives a solution of this $q$-difference equation holomorphic in a neighborhood of the limit point on the K\"ahler moduli space corresponding to this stability condition. 

By studying the explicit form of the $q$-difference equation in Section 5, it is straightforward to give a formula for the solution holomorphic in a neighborhood of an arbitrary limit point of the K\"ahler moduli space. For the reasons explained above, such solutions are expected to coincide with the vertex functions of the quiver variety with the appropriate stability condition. As further evidence of this expected correspondence, we examine the monodromy of the $q$-difference equation and verify that this agrees, up to a constant, with the elliptic R-matrix of the symplectic dual variety, see~\cite{AOElliptic}. 

Putting all this together, we start with the capped vertex function with descendant $\tau$ for the variety with positive stability condition, and examine the limit as the K\"ahler parameters approach a limit point corresponding to a general stability condition $\theta$. This provides us with the character of $\tau$ on the quiver variety with identical dimension data as $X_{\lambda}$ and arbitrary stability parameter~$\theta$.

\section*{Acknowledgments}
A. Smirnov was supported by the Russian Science Foundation under grant 19-11-00062. The authors would also like to thank Peter Koroteev for pointing out the connection between the vertex functions of the cotangent bundle of the flag variety and the varieties studied here.

\section{Quasimaps and Vertex Functions}
\subsection{}
Let $\lambda$ be a Young diagram rotated  by $45^{\circ}$ as in Figure \ref{yng0}.  Let $\textsf{v}_i \in  \matN$, $i\in \matZ$ denote the number of boxes in the $i$th vertical column, as oriented in the Figure. We assume that $i=0$ corresponds to the column which contains the corner  box of $\lambda$. 
\begin{figure}[ht]
\centering
\begin{tikzpicture}[roundnode/.style={circle, draw=black, very thick, minimum size=5mm},squarednode/.style={rectangle, draw=black, very thick, minimum size=5mm}] 
\draw[ultra thick] (-5,5)--(0,0) -- (4,4);
\draw[ultra thick] (-4,6)--(1,1);
\draw[ultra thick] (-5,5)--(-4,6);
\draw[ultra thick] (-2,6)--(2,2);
\draw[ultra thick] (-4,4)--(-2,6);
\draw[ultra thick]  (0,6)--(3,3);
\draw[ultra thick] (2,6)--(4,4);
\draw[ultra thick] (-1,1)--(3,5);
\draw[ultra thick] (-2,2)--(2,6);
\draw[ultra thick] (-3,3)--(0,6);
\draw[ultra thick] (4,4)--(2,6);
\end{tikzpicture}
\caption{The partition $\lambda=(5,4,3,2)$ rotated by $45^{\circ}$ and $\textsf{v}=(\textsf{v}_i)=(\ldots,0,0,1,1,2,2,3,2,2,1,0,0,\ldots)$.} \label{yng0}
\end{figure}
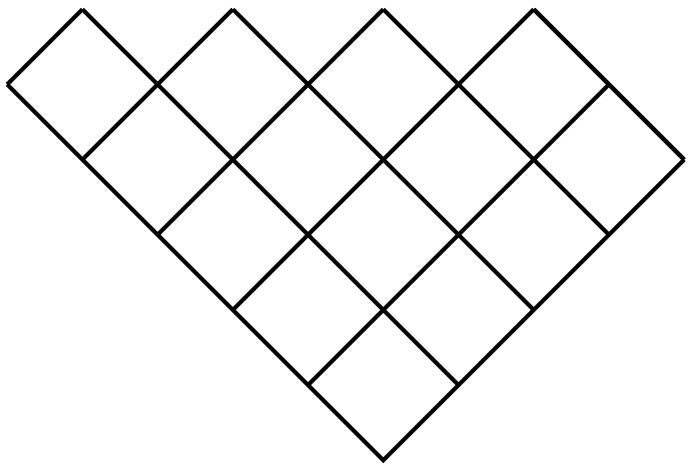
Let $\textsf{v}=(\textsf{v}_i)$ and $\textsf{w}_{i}=\delta_{i,0}$. Let  
\be \label{variety}
X_{\lambda}:={\cal{M}}(\textsf{v}, \textsf{w})
\ee
denote the $A_{\infty}$ Nakajima quiver variety defined by these data, with stability condition given by the character 
$$
\theta_+: (g_i) \to \prod_i\det (g_i).
$$
We will refer to this stability condition as the {\it canonical} 
or {\it positive} stability condition. One of the goals of this paper is to analyze the enumerative invariants (vertex functions) of quiver varieties arising for generic $\theta=(\theta_1,\dots,\theta_{|I|})$ stability conditions corresponding to characters
$$
\theta: (g_i) \to \prod_i\det (g_i)^{\theta_i}.
$$

Throughout, we will assume that the $A_{\infty}$ quiver is oriented with arrows pointing to the right. Nakajima quiver varieties come equipped with a natural action of a torus $\bT$ and a collection of $\bT$-equivariant vector bundles which we call $\mathcal{V}_{i}$, $i \in \mathbb{Z}$. We denote by $\hbar$ the weight of the $\bT$-module $\mathbb{C} \omega$, where $\omega$ is the symplectic form on $X_{\lambda}$.

\subsection{}
To define the vertex functions, we need to study moduli spaces of stable quasimaps from $\mathbb{P}^1$ to a Nakajima quiver variety $X$, as introduced in \cite{qm}. We review the main objects of study in the case of Nakajima quiver varieties, see Section 6 of \cite{pcmilect} and Section 2 of \cite{Pushk1}.

The definition of a quasimap to a variety $X$ requires a presentation of $X$ as a geometric invariant theory quotient. For a Nakajima quiver variety arising from a quiver $Q$ with vertex set $I$ and dimensions $\mathsf{v},\mathsf{w}$, this takes the form
$$
X:= \mu^{-1}(0)\sslash_{\theta} G_{\mathsf{v}} = \mu^{-1}(0)^{\theta-s} / G_{\mathsf{v}}
$$
where $\mu$ is the moment map associated to the $G_{\mathsf{v}}:=\prod_{i \in I} GL(\mathsf{v}_{i})$ action on $T^*Rep_Q(\mathsf{v},\mathsf{w})$, the cotangent bundle of the space of framed representations of $Q$ and $\mu^{-1}(0)^{\theta-s}$ is the intersection of $\mu^{-1}(0)$ with the stable points defined by a choice of  $\theta$ (\cite{GinzburgLectures}). 

In the context of the varieties $X_{\lambda}$, this data looks as follows. The dimension vectors correspond to vector spaces $V_{i}$ and the space $T^*Rep_Q(\mathsf{v},\mathsf{w})$ consists of 4-tuples $(A,B,I,J)$ so that
$$
A=\bigoplus_{i\in \matZ} A_i, \ \ B=\bigoplus_{i\in \matZ} B_i, \ \ \ A_i\in \mathrm{Hom}(V_{i},V_{i+1}), \ \ B_i\in \mathrm{Hom}(V_{i+1},V_{i}).
$$
and 
$$
I\in V_0 \ \ J\in V_0^*
$$
The moment map is $\mu(A,B,I,J)=[A,B]+ IJ$ and the tuple $(A,B,I,J)$ is $\theta_{+}$-stable if and only if 
\begin{equation}\label{stab}
    \mathbb{C}\langle A,B \rangle I= \bigoplus_{i\in \mathbb{Z}} V_i
\end{equation}
where $\mathbb{C}\langle A,B \rangle$ denotes the ring of (noncommutative) polynomials in $A,B$.

For generic choice of $\theta$ the condition (\ref{stab}) should be substituted by more complicated conditions described in Proposition 5.1.5 in \cite{GinzburgLectures}.

\subsection{}
We  recall some facts about quasimaps to a quiver variety $X$. For more details, see \cite{qm} and \cite{pcmilect} Sections 4-6.

\begin{Definition}\label{qm}
A stable genus zero quasimap to $X$ relative to $p_1,\ldots,p_m$ is given by the following data
$$
(C,p_1',\ldots,p_m',P,f,\pi)
$$
where 
\begin{itemize}
    \item $C$ is a genus zero connected curve with at worst nodal singularities and the $p_1', \ldots, p_m'$ are nonsingular points of $C$.
    \item $P$ is a principal $G_{\mathsf{v}}$ bundle over $C$.
    \item $f$ is a section of the bundle $P\times_{G_{\mathsf{v}}} T^*Rep_Q(\mathsf{v},\mathsf{w})$ satisfying $\mu=0$.
    \item $\pi: C \to D$ is a regular map.
\end{itemize}
satisfying the following conditions
\begin{enumerate}
    \item There is a distinguished component $C_0$ of $C$ so that $\pi$ restricts to an isomorphsism $\pi: C_0 \cong D$ and $\pi(C\setminus C_0)$ is zero dimensional (possibly empty).
    \item $\pi(p_i')=p_i$.
    \item $f(p)$ is stable in the sense of (\ref{stab}) for all but a finite set of points disjoint from $p_1',\ldots,p_m'$ and the nodes of $C$.
    \item The line bundle $\omega_{\widetilde{C}}\left(\sum_{i} p_i' + \sum_j q_j \right) \otimes \mathcal{L}_{\theta}^{\epsilon}$ is ample for every rational $\epsilon>0$, where $\mathcal{L}_{\theta}= P \times_{G_{\mathsf{v}}} \mathbb{C}_{\theta}$, $\widetilde{C}$ is the closure of $C\setminus C_0$, $q_j$ are the nodes of $\widetilde{C}$, and $\mathbb{C}_{\theta}$ is the one dimensional $G_{\mathsf{v}}$-module defined by the stability condition $\theta$.
\end{enumerate}
\end{Definition}

\begin{figure}[ht]
\centering
\includegraphics[width=8cm]{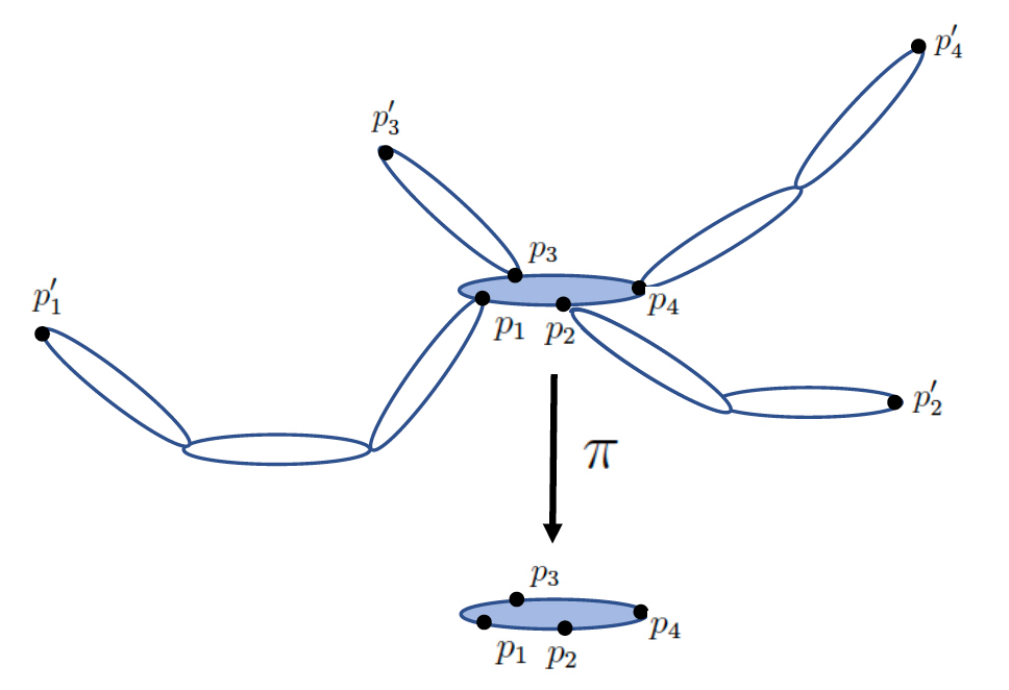}
\caption{An example of the domain of a relative quasimap with four marked points. A chain of rational curves is attached to each point $p_i$, and condition 4 implies that the last component of each chain has a marked point $p_i'$. The map $\pi$ collapses each chain to a single point.}
\end{figure}

\begin{Definition}
A relative quasimap $(C,p_1',\ldots,p_m',P,f,\pi)$ is nonsingular at $p\in C$ if $f(p)$ is stable in the sense of (\ref{stab}). In this case, $f(p)$ gives a point in the quiver variety.
\end{Definition}

\begin{Definition}
The degree of a quasimap $(C,p_1',\ldots,p_m',P,f,\pi)$ is the tuple $\dd=(d_i)_{i \in \mathbb{Z}}$ where $d_i$ is the degree of the rank $\mathsf{v}_i$ vector bundle $P\times_{G_{\mathsf{v}}} V_i \to C$.
\end{Definition}

\begin{Theorem}(\cite{qm} Theorem 7.2.2)
The stack $\qm^{\dd}_{\textrm{relative}\, p_1, \ldots, p_m}$ parameterizing the data of stable genus zero quasimaps to $X$ is a Deligne-Mumford stack of finite type with a perfect obstruction theory.
\end{Theorem}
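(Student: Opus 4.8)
The plan is to follow the general construction of moduli of stable quasimaps to GIT quotients, specialized to the Nakajima data $(\mathsf v,\mathsf w)$. The first step is to realize $\qm^{\dd}_{\mathrm{relative}\,p_1,\ldots,p_m}$ as an open substack of a larger algebraic stack. Let $\mathfrak M$ be the stack of tuples $(C,p_1',\ldots,p_m',P,\pi)$: a genus zero prestable curve with marked points, a principal $G_{\mathsf v}$-bundle $P$ on $C$, and a contraction $\pi\colon C\to D$ as in Definition \ref{qm}. This $\mathfrak M$ is an algebraic stack, locally of finite type: the stack of prestable genus zero curves is algebraic, $\mathfrak{Bun}_{G_{\mathsf v}}$ is a smooth algebraic stack since $G_{\mathsf v}=\prod_i GL(\mathsf v_i)$, and the contraction $\pi$ to a fixed $D$ is cut out inside a Hom-stack. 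Over $\mathfrak M$, the remaining datum $f$ is a section of the associated vector bundle $\mathcal R:=P\times_{G_{\mathsf v}}T^{*}Rep_Q(\mathsf v,\mathsf w)$ on the universal curve satisfying $\mu\circ f=0$; sections of a vector bundle on the universal curve form an algebraic stack affine over a pushforward, and $\mu=0$ is a closed condition, so the stack $\mathfrak Q$ of all such data is algebraic and locally of finite type. Finally, conditions (3) and (4) of Definition \ref{qm} — that $f$ is stable in the sense of (\ref{stab}) off a finite set disjoint from nodes and markings, and that the twisted line bundle is ample — are both open in families, so $\qm^{\dd}_{\mathrm{relative}\,p_1,\ldots,p_m}$ is an open substack of the degree-$\dd$ locus of $\mathfrak Q$, hence algebraic.

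Next I would upgrade ``locally of finite type'' to ``finite type'' by bounding the family. Fixing $\dd$, the distinguished component is $C_0\cong D\cong\mathbb P^1$, on which $P$ has fixed degree and so varies in a bounded family by Grothendieck's theorem; the ampleness condition (4) bounds both the number of non-distinguished components and their combinatorial type in terms of $\dd$, since each chain must carry positive $\mathcal L_{\theta}$-degree somewhere or else one of the marked points $p_i'$, which a degree count controls; and over this bounded family of curves-with-bundles the sections $f$ with $\mu(f)=0$ again form a bounded family. For the Deligne--Mumford property one checks that a stable quasimap has finite automorphism group: any automorphism commutes with $\pi$, hence restricts to the identity on $C_0$ (where $\pi$ is an isomorphism) and has finite stabilizer on each contracted chain by condition (4) together with the marked points and nodes, exactly as for stable maps; and an automorphism of $(P,f)$ over $C_0$ is trivial on the dense locus where $f$ is stable, since stable points of the Nakajima data have trivial $G_{\mathsf v}$-stabilizer, while on contracted chains where $f$ may be an unstable constant, the positivity of $\mathcal L_{\theta}$ forced by condition (4) rules out nontrivial automorphisms coming from a positive-dimensional stabilizer.

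Properness, which I expect to be \textbf{the main obstacle}, is proved by the valuative criterion. Given a DVR $R$ with fraction field $K$ and a stable quasimap over $\mathrm{Spec}\,K$, one must produce, after a finite base change, a unique extension over $\mathrm{Spec}\,R$. The strategy is: extend the prestable curve and the map $\pi$ by semistable reduction; control the limit away from finitely many points of the special fibre by composing the quasimap with the affine quotient $\mu^{-1}(0)\sslash G_{\mathsf v}$, over which the section extends; extend $P$ and $f$ over the resulting family away from those points, take closures, and perform elementary modifications of the bundle, which introduce rational chains over those points — these become the bubbles of the limiting quasimap — after which one contracts appropriately so that $C_0\cong D$ is preserved and condition (4) holds. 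Uniqueness follows because two extensions agree over $\mathrm{Spec}\,K$ and over $C_0$, and the combination of stability (3) with the ampleness normalization (4) pins down the special fibre, exactly as for stable maps and stable quasimaps. The relative structure at $p_1,\ldots,p_m$ — attaching chains collapsed by $\pi$ — is precisely what allows the limit to exist while keeping each $p_i'$ nonsingular, replacing the usual bubbling at marked points; I expect the bookkeeping of these chains and the base change to be the most delicate part.

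Finally, for the perfect obstruction theory one works relative to the smooth Artin stack $\mathfrak M$. Over $\mathfrak M$ the quasimap is the section $f$ of $\mathcal R$ subject to $\mu\circ f=0$, and its relative deformation theory is controlled by the pushforward along the universal curve of $f^{*}$ of the tangent complex of the quotient stack $[\mu^{-1}(0)/G_{\mathsf v}]$, built from $\mathcal R$, the adjoint bundle $P\times_{G_{\mathsf v}}\Lie\,G_{\mathsf v}$, and $d\mu$. The two features that make the resulting complex perfect of the expected amplitude $[-1,0]$ are that $\mu^{-1}(0)$ is a complete intersection in the vector space $T^{*}Rep_Q(\mathsf v,\mathsf w)$ (so that $T^{*}Rep_Q(\mathsf v,\mathsf w)$ and $d\mu$ resolve it correctly), and that the stability condition (\ref{stab}) forces the $G_{\mathsf v}$-action to be free on a dense open subset of the domain, which kills the infinitesimal gauge symmetry of $(P,f)$ and, dually, the top moment-map obstruction. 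Since $\mathfrak M$ is smooth, combining this relative obstruction theory with $L_{\mathfrak M}$ yields an absolute perfect obstruction theory on $\qm^{\dd}_{\mathrm{relative}\,p_1,\ldots,p_m}$, completing the proof.
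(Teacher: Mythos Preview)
The paper does not prove this theorem at all; it is quoted verbatim as Theorem 7.2.2 of \cite{qm} and used as a black box. So there is no ``paper's own proof'' to compare against, and your sketch is an outline of the Ciocan--Fontanine--Kim--Maulik argument rather than of anything in this paper.

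That said, there is a genuine error in your proposal. You devote a paragraph to establishing properness of $\qm^{\dd}_{\mathrm{relative}\,p_1,\ldots,p_m}$ via the valuative criterion and call it ``the main obstacle.'' But properness is \emph{not} part of the statement: the theorem asserts only that the stack is Deligne--Mumford, of finite type, and carries a perfect obstruction theory. More importantly, for the Nakajima targets considered here the stack is \emph{not} proper. The variety $X_{\lambda}$ (and Nakajima quiver varieties in general) is not proper, so a one-parameter family of quasimaps can simply run off to infinity in the target and the valuative criterion fails. What \emph{is} proper is the evaluation map $\mathrm{ev}_{p_2}$ on the relative moduli space, and the paper records that separately (Section 7.4 of \cite{pcmilect}) precisely because it is the substitute one uses to define pushforwards. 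Your bubbling/semistable-reduction argument would go through if the affine quotient $\mu^{-1}(0)\sslash G_{\mathsf v}$ were a point, but for quiver varieties it is the affine Nakajima variety, and there is no mechanism in your sketch that prevents the limit from escaping there.

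The remaining three paragraphs---algebraicity as an open substack of a section stack over a stack of curves with bundles, boundedness from fixing $\dd$ together with condition (4), finite automorphisms from free $G_{\mathsf v}$-action at the generic point plus ampleness on chains, and the relative perfect obstruction theory from $Rp_{*}f^{*}T_{[\mu^{-1}(0)/G_{\mathsf v}]}$---are a reasonable summary of how the cited reference proceeds. One small caution: you assert that $\mu^{-1}(0)$ is a complete intersection; what is actually needed (and what \cite{qm} assumes) is an LCI-type condition on the GIT presentation, which holds here, but the global complete intersection claim is stronger than necessary and not obviously true for arbitrary $(\mathsf v,\mathsf w)$.
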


\begin{Definition}Let $\qm^{\dd}_{\textrm{nonsing}\, p_1,\ldots,p_m}$ be the stack parameterizing the data of degree $\dd$ quasimaps to $X$ relative to $p_1,
\ldots,p_m$ such that $C\cong D \cong \mathbb{P}^1$. For such a quasimap, most of the conditions in Definition \ref{qm} become trivially satisfied.
\end{Definition}
Restricting the obstruction theory of $\qm^{\dd}_{\textrm{relative}\, p_1, \ldots, p_m}$ gives a perfect obstruction theory on $\qm^{\dd}_{\textrm{nonsing}\, p_1,\ldots,p_m}$. The symmetrized virtual structure sheaf on such a space will be denoted by $\vss^{\dd}$, with the context determining exactly which quasimap space we are considering.

Given a quasimap $(C,p_1',\ldots,p_m',P,f,\pi)$ and $p\in C$, there is an evaluation map to the quotient stack: 
$$
\mathrm{ev}_p(C,p_1',\ldots,p_m',P,f,\pi)=f(p)\in [\mu^{-1}(0)/G_{\mathsf{v}}]
$$

Given a Schur functor $\tau$ in the tautological bundles on $X_{\lambda}$, let $\tau_{\text{stack}}$ be the associated $K$-theory class on $[\mu^{-1}(0)/G_{\mathsf{v}}]$. Then we can define an induced $K$-theory class on $\qm^{\dd}_{\textrm{relative}\, p_1, \ldots, p_m}$:
\be \label{stackt}
\tau|_p:= \mathrm{ev}_p^*(\tau_{\text{stack}})
\ee
\subsection{}
The action of the torus $\bT$ on a quiver variety $X$ and of $\mathbb{C}^\times_q$ on $\mathbb{P}^1$ induce an action of $\bT\times \mathbb{C}^\times_q$ on quasimaps to $X$. Let $p_1=0$ and $p_2=\infty$ in $\mathbb{P}^1$. In what follows, we will denote $\zz^{\dd}=\prod_{i} z_i^{d_i}$ and use this notation to keep track of the degree of quasimaps. The variables $z_i$ are known as the K\"ahler parameters, and are characters of the K\"ahler torus
$$
\bK:= \left(\mathbb{C}^\times\right)^{|I|}
$$
where $I$ is the vertex set of the quiver.

The evaluation maps on relative quasimaps are proper (\cite{pcmilect} Section 7.4), and thus we can make the following definition.
\begin{Definition}
The capped vertex function with descendant $\tau$ inserted at $p_1$ is the formal power series
$$
\hat{\textbf{V}}^{(\tau)}(\zz)=\sum\limits_{\dd} \textrm{ev}_{p_2, *}(\vss^{\dd}\otimes \tau|_{p_1},\qm^{\dd}_{\textrm{relative}\, p_2} ) \zz^\dd  \in K_{\bT}(X)[[\zz]]
$$
where $\vss^{\dd}$ is the symmetrized virtual structure sheaf on $\qm^{\dd}_{\textrm{relative}\, p_2}$.
\end{Definition}
While the evaluation map $\mathrm{ev}_{p_2}$ on $\qm^{\dd}_{\text{nonsing}\, p_2}$ is not proper, the restriction to the $\mathbb{C}^{\times}_q$-fixed locus
$$
\mathrm{ev}_{p_2}: \left(\qm^{\dd}_{\text{nonsing}\, p_2}\right)^{\mathbb{C}^\times_q} \to X
$$
is (\cite{pcmilect} Section 7.2). Using equivariant localization, we can thus make the following definition.
\begin{Definition}
The bare vertex function with descendant $\tau$ inserted at $p_1$ is the formal power series
$$
\textbf{V}^{(\tau)}(\zz)=\sum\limits_{\dd} \textrm{ev}_{p_2, *}(\vss^{\dd}\otimes \tau|_{p_1},\qm^{\dd}_{\text{nonsing}\, p_2}) \zz^\dd  \in K_{\bT \times \mathbb{C}^\times_q}(X)_{loc}[[\zz]]
$$
where $\vss^{\dd}$ is the symmetrized virtual structure sheaf on $\qm^{\dd}_{\text{nonsing}\, p_2}$.
\end{Definition}

In what follows, we will omit the superscript $(\tau)$ in the bare vertex function when $\tau=1$.

\subsection{}

\begin{Definition} The capping operator is the formal series
$$\Psi(\zz)= \sum_{\dd} \textrm{ev}_{p_1,*} \otimes \textrm{ev}_{p_2,*}(\vss^\dd,\qm^{\dd}_{\substack{\textrm{relative}\,p_1 \\ \textrm{nonsing}\,p_2}}) \zz^\dd \in K_{\bT}^{\otimes 2}(X)_{loc}[[\zz]]$$
where $\vss^\dd$ denotes the symmetrized virtual structure sheaf on $\qm^{\dd}_{\substack{\textrm{relative}\,p_1 \\ \textrm{nonsing}\,p_2}}$
\end{Definition}
The standard pairing on equivariant $K$-theory
$$
(\mathcal{F}, \mathcal{G})= \chi(\mathcal{F}\otimes \mathcal{G})
$$
allows us to interpret $\Psi(\zz)$ as a linear map $$
\Phi(z): K_{\bT}(X)_{loc}[[\zz]] \to K_{\bT}(X)_{loc}[[\zz]]
$$
We have the following theorem:

\begin{Theorem}(\cite{pcmilect} Section 7.4)\label{capping}
The capping operator satisfies the equation
$$\hat{\textbf{V}}^{(\tau)}(\zz)= \Psi(\zz) \textbf{V}^{(\tau)}(\zz)$$
\end{Theorem}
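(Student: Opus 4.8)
The plan is to follow the degeneration argument of \cite{pcmilect}, Section~7.4, which realizes the capping operator as the contribution of the rational chains that can bubble off over the relative point $p_2$.

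First I would spell out the stratification of $\qm^{\dd}_{\textrm{relative}\,p_2}$ implicit in Definition~\ref{qm}. A relative quasimap has a distinguished component $C_0\cong D\cong\mathbb{P}^1$ to which a (possibly trivial) chain of rational curves is attached at the point $p_\bullet\in C_0$ lying over $p_2$; the quasimap is nonsingular at $p_\bullet$, so $\textrm{ev}_{p_\bullet}$ takes values in $X$ rather than in the quotient stack. Hence, on each stratum, a relative quasimap is obtained by gluing over $X$ a quasimap on $C_0$ that is nonsingular at $p_\bullet$ and carries the descendant $\tau|_{p_1}$ at $p_1\in C_0$, together with a quasimap on the chain that is relative to $p_2$ and nonsingular at the node. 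Summing over $\dd$ and over strata, the gluing formula for the symmetrized virtual structure sheaf (\cite{pcmilect}, Sections~6--7; \cite{qm}) --- in which the node is glued using precisely the $K$-theoretic pairing $(\mathcal F,\mathcal G)=\chi(\mathcal F\otimes\mathcal G)$ on $K_{\bT}(X)$ --- factors $\hat{\textbf V}^{(\tau)}(\zz)$ as a composition of two generating functions: the one built from the main components $C_0$ and the one built from the chains at $p_2$.

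It then remains to identify the two factors. The main-component factor is, by construction, the sum of $\textrm{ev}_{p_\bullet,*}\!\big(\vss^{\dd}\otimes\tau|_{p_1}\big)\,\zz^{\dd}$ over nonsingular quasimaps, i.e. $\textbf V^{(\tau)}(\zz)$ --- here one passes to the $\mathbb{C}^\times_q$-fixed locus, on which $\textrm{ev}_{p_\bullet}$ is proper, as recalled in the text. For the chain factor I would run the same stratification on the $\mathbb{P}^1$ defining $\Psi$: its moduli space $\qm^{\dd}_{\substack{\textrm{relative}\,p_1\\ \textrm{nonsing}\,p_2}}$ also consists of a main component nonsingular at $p_2$ together with a chain at the relative point, and matching the two stratifications (under $p_1\leftrightarrow p_2$, $p_2\rightsquigarrow p_\bullet$) shows that the chain generating function occurring in $\hat{\textbf V}^{(\tau)}$ is exactly $\Psi(\zz)$; equivalently, both are characterized as the unique solution of the $q$-difference equation of $X_\lambda$ (see \cite{OS}) normalized to the identity in degree $\dd=0$. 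Assembling the gluing formula then gives $\hat{\textbf V}^{(\tau)}(\zz)=\Psi(\zz)\textbf V^{(\tau)}(\zz)$.

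The main obstacle is the bookkeeping of the symmetrization. The sheaf $\vss^{\dd}$ is twisted by a square root $\mathcal K_{\mathrm{vir}}^{1/2}$ of the virtual canonical bundle, and one has to check that the polarizations on the two pieces of each degeneration stratum recombine, together with the gluing contribution at the node, to $\mathcal K_{\mathrm{vir}}^{1/2}$ on $\qm^{\dd}_{\textrm{relative}\,p_2}$ (and that the weight $\zz^{\dd}$ splits additively in $\dd$), so that the gluing formula holds with the symmetrized sheaves and the bare pairing, with no extra correction factor. A secondary point is that $\textbf V^{(\tau)}(\zz)$ only exists after $\mathbb{C}^\times_q$-localization, so the whole argument must be carried out $\mathbb{C}^\times_q$-equivariantly, and the non-localized identity then follows because $\hat{\textbf V}^{(\tau)}$, a genuine proper pushforward, is the sum of the $\mathbb{C}^\times_q$-fixed-point contributions organized as above.
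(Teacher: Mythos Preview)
The paper does not give its own proof of this theorem; it is quoted from \cite{pcmilect}, Section~7.4, and used as a black box. Your proposal correctly sketches the standard degeneration/gluing argument from that reference---factoring a relative quasimap at a node into a main component carrying the descendant and a bubble chain over the relative point, and identifying the two factors with $\textbf{V}^{(\tau)}$ and $\Psi$ respectively---so there is nothing in the paper to compare against, and your outline is the expected one. Your caveats about matching the polarizations $\mathcal{K}_{\mathrm{vir}}^{1/2}$ across the node and working $\mathbb{C}^\times_q$-equivariantly are exactly the technical points that need to be checked when one actually carries this out.
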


\subsection{}
In the simplest situation of the zero-dimesnional quiver varieties $X_{\lambda}$, which we consider in the present paper, we have $K_{\bT}(X_{\lambda})_{loc}=\matQ(\hbar,q)$. Thus, in this case all the functions defined above are power series in the K\"ahler parameters with some rational coefficients in  $\hbar,q$:
$$
\hat{\textbf{V}}^{(\tau)}(\zz), \textbf{V}^{(\tau)}(\zz), \Psi(\zz)  \in \matQ(\hbar,q)[[\zz]]
$$
Let us denote $g(\zz)=\hat{\textbf{V}}^{(1)}(\zz)$. With this notation, it follows from the previous theorem that for $X_{\lambda}$ we have
$$
\Psi(\zz) = \frac{g(\zz)}{\textbf{V}^{(1)}(\zz)}
$$
and thus the capped vertex with descendent $\tau$ has the form:
$$
\hat{\textbf{V}}^{(\tau)}(\zz)=g(\zz) \dfrac{\textbf{V}^{(\tau)}(\zz)}{\textbf{V}^{(1)}(\zz)}
$$
We see that $g(\zz)$ appears as a normalization prefactor in the formulas for the capped vertex. Thus, it will be convenient to redefine the capped vertex function by normalizing as $\hat{\textbf{V}}^{(\tau)}(\zz)\to \hat{\textbf{V}}^{(\tau)}(\zz)/g(\zz)$.  

Let us note here that the function $g(\zz)$ can be computed explicitly.  It coincides with the multiplicative identity of the quasimap quantum K-theory ring, see Section 3.2 \cite{Pushk1}. In the case of $X_{\lambda}$, it is given by the gluing matrix ${\bf G}$. It can be shown \cite{Kononov2} that the gluing matrix of $X$ equals the zero slope $K$-theoretic $R$-matrix of the symplectic dual variety $X^{!}$. 
The K-theoretic version of Proposition \ref{Rmatprop} then gives:
$$
g(\zz)={\bf G}=\prod\limits_{\Box\in \lambda} \dfrac{1-z_{\Box}}{1-\hbar z_{\Box}}.
$$

\section{Vertex Functions for $X_{\lambda}$}
\subsection{}
Fix a partition $\lambda$, We do not distinguish between a partition and its Young diagram. Let $\mathsf{v}=(\mathsf{v}_i)$ be as in Section 2.1. We define the following functions for $\square \in \lambda$. Let $c_{\lambda}(\square)$ denote the content of $\square$, or the horizontal coordinate of $\square$ when the Young diagram of $\lambda$ is rotated as in Figure \ref{yng2}. We assume the corner box has content 0. Let $C_{\lambda}(n)$ be the set of all boxes of content $n$.

Let $h_{\lambda}(\square)$ denote the height of $\square$ in $\lambda$, normalized so that the heights of the boxes with content $i$ take all the values between 1 and $\mathsf{v}_i$. Let $a_{\lambda}(\square)$ and $l_{\lambda}(\square)$ denote the length of the arm and leg (not including $\square$) based at $\square$, respectively. If $\square$ has rectangular coordinates $(i,j)$, then 
$$l_{\lambda}(\square)=\lambda_i-j \quad \text{and} \quad
a_{\lambda}(\square)=\lambda'_j-i
$$
where $\lambda'$ is the transpose of $\lambda$.

Define
$$
\zeta_{\square}= \begin{cases} \left(\frac{\hbar}{q}\right)^{h_{\lambda}(\square)}\zeta_{c(\square)+a_{\lambda}(\square)} & \text{if} \ \ c(\square) \geq 0\\
\left(\frac{q}{\hbar}\right)^{h_{\lambda}(\square)} \zeta_{c(\square)-l_{\lambda}(\square)-1} & \text{if} \ \  c(\square)< 0
\end{cases}
$$
where $\zeta_i$ are a collection of variables related to the K\"ahler parameters by $z_i=\frac{\zeta_{i-1}}{\zeta_{i}}$.
Define the difference operator
$$
p_i f(\zeta_i):=f(q\zeta_i)
$$
where $f$ is some function of $\zeta_i$.
Then define
\begin{equation} \label{pbox}
p_{\square}=\begin{cases} \prod_{i=c(\square)+\mathsf{v}_{c(\square)}-h_{\lambda}(\square)}^{c(\square)+a_{\lambda}(\square)} p_i^{-1} & \text{if} \ \ c(\square)\geq 0 \\
\prod_{i=c(\square)-\mathsf{v}_{c(\square)}+h_{\lambda}(\square)-1}^{c(\square)+l_{\lambda}(\square)-1} p_i & \text{if} \ \  c(\square)<0
\end{cases}
\end{equation}

See Figure \ref{yng2} for an example. 

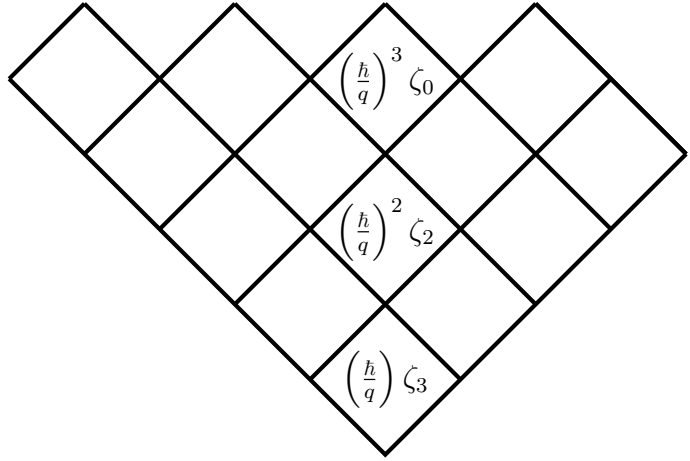
\begin{figure}[ht]
\centering
\begin{tikzpicture}[roundnode/.style={circle, draw=black, very thick, minimum size=5mm},squarednode/.style={rectangle, draw=black, very thick, minimum size=5mm}] 
\draw[ultra thick] (-5,5)--(0,0) -- (4,4);
\draw[ultra thick] (-4,6)--(1,1);
\draw[ultra thick] (-5,5)--(-4,6);
\draw[ultra thick] (-2,6)--(2,2);
\draw[ultra thick] (-4,4)--(-2,6);
\draw[ultra thick]  (0,6)--(3,3);
\draw[ultra thick] (2,6)--(4,4);
\draw[ultra thick] (-1,1)--(3,5);
\draw[ultra thick] (-2,2)--(2,6);
\draw[ultra thick] (-3,3)--(0,6);
\draw[ultra thick] (4,4)--(2,6);
\node[] at (0,5) {$\left(\frac{\hbar}{q}\right)^3 \zeta_{0}$};
\node[] at (0,3) {$\left(\frac{\hbar}{q}\right)^2 \zeta_{2}$};
\node[] at (0,1) {$\left(\frac{\hbar}{q}\right) \zeta_{3}$};
\end{tikzpicture}
\caption{The values of $\zeta_{\square}$ are shown inside the boxes of $C_{\lambda}(0)$. If $\square$ is the box with content 0 and height 1, then $a_{\lambda}(\square)=3$ and $l_{\lambda}(\square)=4$. Also, $p_{\square}=\prod_{i=0+3-1}^{0+3} p_i^{-1}=p_{2}^{-1}p_{3}^{-1}$.} \label{yng2}
\end{figure}

\subsection{}
Fix $n,r\in \mathbb{Z}$ so that $1\leq r \leq \mathsf{v}_n$.

% Define
% $$S^n:= \begin{cases}
% \{i+\lambda'_{\mathsf{v}_n-i+n}-\mathsf{v}_n -n \mid n\leq i \leq n+\mathsf{v}_n-1\} & n \geq 0 \\
% \{i+\mathsf{v}_n -n - \lambda_{\mathsf{v}_n+i+1} \mid n-\mathsf{v}_n\leq i \leq n-1 \} & n<0
% \end{cases} $$

% Define
% $$S^n:= \begin{cases}
% (j-1+\lambda'_{\mathsf{v}_n-j+1}-\mathsf{v}_n)_{j=1}^{\mathsf{v}_n} \geq 0 \\
% (\mathsf{v}_n-j-\lambda_{\mathsf{v}_n-j+1})_{j=1}^{\mathsf{v}_n} & n<0
% \end{cases} $$
We define the following difference operator:
$$T^{n,r}_{\lambda}= \hbar^{r(r-1)/2+\beta(n)}\sum_{\substack{I \subset C_{\lambda}(n) \\ |I|=r }} \prod_{\substack{\square \in I \\ \square' \in C_{\lambda}(n) \setminus I}} \frac{\hbar \zeta_{\square'} -  \zeta_{\square}}{\zeta_{\square'} - \zeta_{\square}} \prod_{i \in I} p_{\square}$$
where $\beta(n)=|n|$ if $n<0$ and $0$ otherwise.
We note that up to normalization and relabeling of the variables, the operators $T_{\lambda}^{n,r}$ are exactly the Macdonald difference operators, also known as the difference operators of the  trigonometric Ruijsenaars-Schneider model, see \cite{mac} and \cite{tRSKor}. In particular the $q$-difference operators $T^{n,r}_{\lambda}$ commute with each other. We denote by 
$$
\textsf{RS}=\matQ[T^{n,r}_{\lambda}]_{n,r\in \matZ}
$$
the commutative ring they generate.

\subsection{}
Let us recall that the descendent insertions of the vertex functions are labeled by elements (\ref{stackt}) of the ring:
$$
 K( [\mu^{-1}(0)/G_{\textsf{v}}] ) \cong \textrm{char}(G_{\textsf{v}}) \, \, \, \, \text{where} \, \, \, \,   G_{\textsf{v}}=\prod_{i\in \mathbb{Z}} GL(\mathsf{v}_i)
$$
Let us denote by $V_n$ the $\textsf{v}_n$-dimensional fundamental representation of $GL(\textsf{v}_n)$. Then this ring is generated by the classes $\tau_{n,r}:=\bigwedge^r V_n$. Let us define a homomorphism 
$$
T: \textrm{char}(G_{\textsf{v}}) \rightarrow \textsf{RS}
$$
by $T(\tau_{n,r})=T^{n,r}_{\lambda}$. Applying the substitution $z_{i}=\frac{\zeta_{i-1}}{\zeta_i}$, the elements of  $\textsf{RS}$ act as $q$-difference operators on the vertex functions. Our main result involves expressing the insertion of a descendant as the action of such operators:

\begin{Theorem}\label{tRS}
The insertion of descendent $\tau$ into the bare vertex function can be expressed as
\begin{align*}
    T(\tau) \textbf{V}_{\lambda}(\zz) = \textbf{V}_{\lambda}^{(\tau)}(\zz)
\end{align*}
\end{Theorem}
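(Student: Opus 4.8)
The plan is to prove Theorem \ref{tRS} by an explicit fixed-point computation of both sides. The bare vertex function $\textbf{V}_\lambda^{(\tau)}(\zz)$ is defined via equivariant localization on $\left(\qm^{\dd}_{\text{nonsing}\,p_2}\right)^{\mathbb{C}^\times_q}$. For the zero-dimensional varieties $X_\lambda$ the torus-fixed points of $X_\lambda$ are isolated, and the $\mathbb{C}^\times_q$-fixed quasimaps are (as in \cite{dinksmir2}) classified combinatorially: each degree $\dd$ corresponds to a choice of vanishing orders of the tautological bundles at $p_2$, encoded box-by-box in the Young diagram. First I would recall from \cite{dinksmir2} the precise bijection between $\mathbb{C}^\times_q$-fixed loci and tuples of integers attached to the boxes of $\lambda$, together with the resulting product formula (\ref{vacvect}) for $\textbf{V}_\lambda(\zz)$; the key structural fact is that the contribution of the virtual structure sheaf factorizes over boxes, with the variable $z_\square$ governing the box $\square$. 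Then I would insert the descendant: tensoring by $\tau|_{p_1} = \mathrm{ev}_{p_1}^*(\tau_{\text{stack}})$ multiplies the localization contribution of each fixed quasimap by the value of the Schur functor $\tau$ evaluated on the fiber of the tautological bundles at $p_1$, which on the $\mathbb{C}^\times_q$-fixed locus is a product of monomials in $\hbar, q$ and the equivariant parameters, shifted according to the degree.

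The main step is then to recognize the degree-shifted descendant weights as the $q$-difference action of $T(\tau)$. Concretely, the passage from the fiber of $\mathcal{V}_n$ at $p_1$ to its fiber at $p_2$ on a fixed quasimap of degree $\dd$ multiplies Chern roots by powers of $q$ dictated exactly by the $d_i$; after the substitution $z_i = \zeta_{i-1}/\zeta_i$, increasing $d_i$ by one is the operator $p_i$. I would show that summing $\tau_{n,r} = \bigwedge^r \mathcal{V}_n$ over the Chern roots of $\mathcal{V}_n$ produces the sum over $r$-subsets $I \subset C_\lambda(n)$, that the ``interaction'' factor $\prod \frac{\hbar\zeta_{\square'} - \zeta_\square}{\zeta_{\square'} - \zeta_\square}$ arises from the ratio of tangent/obstruction contributions at $p_1$ between the chosen and unchosen boxes of content $n$ (this is where the symplectic pairing/weight $\hbar$ enters), and that the monomial prefactor $\hbar^{r(r-1)/2 + \beta(n)}$ matches the polarization convention in the definition of $\vss^{\dd}$. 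The key identity to verify is that, box by box, the operator $\prod_{\square\in I} p_\square$ acting on the product formula (\ref{vacvect}) reproduces precisely the ratio $\textbf{V}_\lambda^{(\tau_{n,r})}(\zz)/\textbf{V}_\lambda(\zz)$ predicted by the fixed-point count — i.e. that the range of the product in (\ref{pbox}) is exactly the set of K\"ahler variables whose $q$-shift records how the descendant weight changes with degree.

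To assemble the full theorem from the generators $\tau_{n,r}$, I would use that $\text{char}(G_{\textsf{v}})$ is generated as a ring by the $\tau_{n,r}$ and that descendant insertion is multiplicative on products of tautological classes in the appropriate sense (insertion of $\tau\tau'$ corresponds to $\mathrm{ev}_{p_1}^*$ of the product), together with commutativity of the $T^{n,r}_\lambda$ noted after their definition, so that $T$ is a well-defined ring homomorphism and it suffices to check the identity on generators; this reduces the statement to the single-$\tau_{n,r}$ case treated above. I expect the main obstacle to be the bookkeeping in the second step: matching the $q$-shift ranges in (\ref{pbox}) — which are stated in terms of content, height, arm, and leg — against the combinatorics of how the degree vector $\dd$ deforms the fixed-point contribution, and in particular getting the edge cases $c(\square) \geq 0$ versus $c(\square) < 0$ to come out with the correct signs of exponents and the correct prefactor $\beta(n)$. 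A clean way to manage this is to first establish the formula for $\tau_{0,1}$ (a single box of content $0$), where (\ref{vacvect}) and the fixed-point analysis of \cite{dinksmir2} are most transparent, and then propagate to general $n,r$ by the combinatorial structure of $C_\lambda(n)$ and $S_\lambda(\square)$.
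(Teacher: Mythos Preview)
Your proposal does not match the paper's argument, and it has a genuine gap. The paper proves Theorem \ref{tRS} via two ingredients you do not invoke: (i) Lemma \ref{lem1}, which rewrites the localization sum (\ref{vf1}) for $\textbf{V}_\lambda(\zz)$ so that, for each choice of the degrees $d_{i,j}$ with $i\le n$, the dependence on $z_{n+1},\dots,z_s$ is packaged into the vertex function $\textbf{V}_p(\aa,\zz)$ of a ``redundant'' cotangent-of-flag variety with equivariant parameters specialized to $a_i=\hbar^{i-1}q^{d_{n,i}}$; and (ii) Koroteev's theorem (Theorem \ref{Kor}, from \cite{tRSKor}) that $\textbf{V}_p(\aa,\zz)$ is an eigenfunction of the tRS operator $T_r$ with eigenvalue $e_r(\aa)$. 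Applying $T_\lambda^{n,r}$ to $\textbf{V}_\lambda$ then acts, through (i), as $T_r$ on each $\textbf{V}_p$ factor, and by (ii) inserts the factor $e_r(\hbar^0 q^{d_{n,1}},\dots,\hbar^{\mathsf{v}_n-1}q^{d_{n,\mathsf{v}_n}})$ into the localization sum; since these are exactly the Chern roots of $\mathcal{V}_n$ at $p_1$ on a degree-$\dd$ fixed quasimap, this is precisely the insertion of $\tau_{n,r}$.

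The gap in your plan is the claim that the interaction factor $\prod\frac{\hbar\zeta_{\square'}-\zeta_\square}{\zeta_{\square'}-\zeta_\square}$ ``arises from the ratio of tangent/obstruction contributions at $p_1$ between the chosen and unchosen boxes.'' It does not: inserting $\tau_{n,r}$ only multiplies each fixed-point contribution by the character $e_r(\{\hbar^{j-1}q^{d_{n,j}}\})$ and leaves $\vss^{\dd}$ untouched, so no such rational factor appears on the descendant side. What must actually be shown is that the Macdonald operator --- rational coefficients and $q$-shifts together --- acts on the localization sum so as to produce exactly that $e_r$ factor at every degree. This is not a bookkeeping issue about the index ranges in (\ref{pbox}); it is the eigenfunction statement (ii), which is the substantive analytic input, and your sketch neither cites it nor outlines an independent proof of it. Relatedly, starting from the closed product formula (\ref{vacvect}) is the wrong end: in the paper that formula is used \emph{after} Theorem \ref{tRS} (in the proof of Corollary \ref{cor}) to evaluate $\hat{\textbf{V}}^{(\tau_{n,r})}_\lambda$, whereas the proof of Theorem \ref{tRS} itself works at the level of the raw localization series (\ref{verpow}).
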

Clearly, to prove the theorem it is enough to show that
$$
T^{n,r}_{\lambda} \textbf{V}_{\lambda}(\zz) = \textbf{V}_{\lambda}^{(\tau_{n,r})}(\zz). 
$$
\subsection{}

Before giving the proof of Theorem \ref{tRS}, we explore an important consequence. Define
\begin{equation*}
   z_{\square} := \prod_{\square' \in H_{\lambda}(\square)} \widehat{z}_{c(\square')}
\end{equation*}
where the shifted parameters $\widehat{z}_i$ are
\begin{equation*}
    \widehat{z}_i:=\left(\frac{\hbar}{q}\right)^{\sigma_{\lambda}(i)} z_i \ \ \ \text{where} \ \ \   \sigma_{\lambda}(i):= \begin{cases} 
      \textsf{v}_{i-1}-\textsf{v}_{i} & \text{if} \ \ i \neq 0 \\
 \textsf{v}_{i-1}-\textsf{v}_{i}+1 & \text{if} \ \ i = 0
   \end{cases} 
\end{equation*}
and $H_{\lambda}(\square)$ denotes the set of boxes in the hook based at $\square$ in $\lambda$. See Figure \ref{yng1}.

\begin{figure}[ht]
\centering
\begin{tikzpicture}[roundnode/.style={circle, draw=black, very thick, minimum size=5mm},squarednode/.style={rectangle, draw=black, very thick, minimum size=5mm}] 
\draw[ultra thick] (-5,5)--(0,0) -- (4,4);
\draw[ultra thick] (-4,6)--(1,1);
\draw[ultra thick] (-5,5)--(-4,6);
\draw[ultra thick] (-2,6)--(2,2);
\draw[ultra thick] (-4,4)--(-2,6);
\draw[ultra thick]  (0,6)--(3,3);
\draw[ultra thick] (2,6)--(4,4);
\draw[ultra thick] (-1,1)--(3,5);
\draw[ultra thick] (-2,2)--(2,6);
\draw[ultra thick] (-3,3)--(0,6);
\draw[ultra thick] (4,4)--(2,6);
\draw[fill=black,opacity=.1] (2,4)--(0,6)--(-1,5)--(2,2)--(4,4)--(3,5)--cycle;
\end{tikzpicture}
\caption{The shaded boxes are an example of a hook in $\lambda=(5,4,3,2)$. If $\square$ is the box at the base of the hook shown, then $z_{\square}= z_0 \left(\frac{\hbar}{q}z_1 \right) z_2 \left(\frac{\hbar}{q}z_3\right)$ and $\zeta_{\square}=\frac{\hbar}{q} \zeta_{3}$.} \label{yng1}
\end{figure}
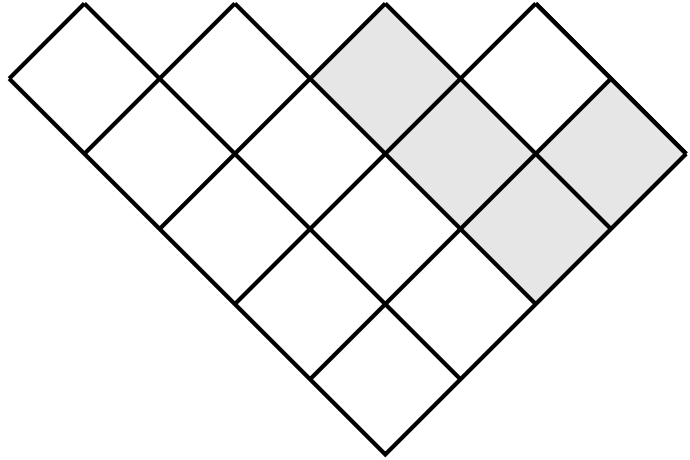

In \cite{dinksmir2}, the authors prove the following formula for $\textbf{V}_{\lambda}(\zz)$:

\begin{Theorem}(\cite{dinksmir2} Theorem 1)
\begin{align} \label{prform}
    \textbf{V}_{\lambda}(\zz)= \prod\limits_{\square \in \lambda} \prod\limits_{i=0}^{\infty} \dfrac{1-\hbar  z_{\square} q^{i}}{1- z_{\square} q^{i}}.
\end{align}
\end{Theorem}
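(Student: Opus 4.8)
The plan is to compute $\textbf{V}_{\lambda}(\zz)$ directly by equivariant localization on the moduli spaces $\qm^{\dd}_{\text{nonsing}\, p_2}$, along the lines of the bare-vertex computations for type-$A$ quiver varieties in \cite{OkBethe} and \cite{Pushk1}. Since $X_{\lambda}$ is a single point, it carries one $\bT$-fixed point, at which each tautological bundle $\mathcal{V}_i$ splits as a sum of $\mathsf{v}_i$ lines, one for each box of content $i$, whose equivariant weights combine into explicit monomials in $\hbar$ and $q$. A $\mathbb{C}^\times_q$-fixed quasimap of degree $\dd$ then reduces to a choice of degrees of these lines over $\mathbb{P}^1$ — a nonnegative integer $\dd_{\square}$ for each $\square\in\lambda$ — together with the unique torus-fixed section, the nonsingular condition at $p_2$ forcing the fibre there to be the quiver datum of the fixed point. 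Since $\mathrm{ev}_{p_2}$ lands in the point $X_{\lambda}$, virtual localization gives
\begin{equation*}
\textbf{V}_{\lambda}(\zz)=\sum_{\dd}\zz^{\dd}\,F_{\lambda}(\dd),
\end{equation*}
where $F_{\lambda}(\dd)$ is the equivariant Euler characteristic of $\vss^{\dd}$ on the corresponding fixed locus, an explicit ratio of $q$-Pochhammer symbols read off from the $\mathbb{C}^\times_q$-graded virtual tangent complex (the $\chi(\mathbb{P}^1,-)$ of the quiver deformation complex assembled from $A_i,B_i,I,J$, the gauge term $\bigoplus_i\mathrm{End}(\mathcal{V}_i)$, and the moment-map term).

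The core of the proof is to show that this series factorizes over the boxes of $\lambda$. Two things must be checked. First, the $\zz$-monomials indexing the fixed loci must regroup into the products $\prod_{\square}z_{\square}^{\dd_{\square}}$ of (\ref{prform}): because the hook $H_{\lambda}(\square)$ of a box with content $c$, leg $l$, arm $a$ occupies exactly the contiguous range of contents $\{c-l,\dots,c+a\}$, the substitution $z_i=\zeta_{i-1}/\zeta_i$ makes $\prod_{m}z_m$ over such a range telescope to $\zeta_{c-l-1}/\zeta_{c+a}$, and the shifts $(\hbar/q)^{\sigma_{\lambda}(i)}$ defining $\widehat z_i$ should emerge from the square-root ($T^{1/2}$) twist built into $\vss$ together with the framing at vertex $0$. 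Second, $F_{\lambda}(\dd)$ itself must factorize as $\prod_{\square}\tfrac{(\hbar;q)_{\dd_{\square}}}{(q;q)_{\dd_{\square}}}$, i.e.\ the contributions of the $A$- and $B$-maps must pair up under the polarization so that all cross-terms between distinct boxes cancel. Granting both points, the sum separates into independent geometric sums over the $\dd_{\square}\ge 0$, and the $q$-binomial theorem
\begin{equation*}
\sum_{d\ge 0}\frac{(\hbar;q)_d}{(q;q)_d}\,x^d=\frac{(\hbar x;q)_\infty}{(x;q)_\infty}=\prod_{i=0}^{\infty}\frac{1-\hbar x q^{i}}{1-x q^{i}}
\end{equation*}
applied with $x=z_{\square}$ yields exactly (\ref{prform}).

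I expect the factorization of $F_{\lambda}(\dd)$ to be the main obstacle: a priori the localization contribution couples all the Chern roots through the nonabelian group $G_{\mathsf{v}}=\prod_i GL(\mathsf{v}_i)$ and the moment-map relation, so the product structure is genuine geometric content rather than bookkeeping. I would attack it either by abelianization — comparing $\textbf{V}_{\lambda}$ with the vertex of the associated abelian GIT quotient and verifying that the usual correction factor collapses because $X_{\lambda}$ is zero-dimensional — or via the connection, pointed out by Koroteev, between $X_{\lambda}$ and cotangent bundles of flag varieties: realize $X_{\lambda}$ as a torus fixed component inside a suitable $T^{*}\mathrm{Fl}$ and deduce (\ref{prform}) from the known $q$-hypergeometric formula for that vertex function. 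A route that avoids the combinatorics entirely uses that $K_{\bT}(X_{\lambda})_{loc}$ has rank one, so $\textbf{V}_{\lambda}$ satisfies a \emph{scalar} $q$-difference system in the $z_i$ (the specialization to this point of the quantum difference equations of \cite{OS}); then one need only check that the right-hand side of (\ref{prform}) solves this system and equals $1$ at $\zz=0$, which is a direct $q$-Pochhammer shift computation.
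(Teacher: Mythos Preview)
The present paper does not prove this statement: it is quoted from \cite{dinksmir2} (Theorem~1 there) and used as input throughout. So there is no proof here to compare against. What one can infer from the text is that the argument in \cite{dinksmir2} works directly with the localization sum (\ref{vf1}) and relies on a combinatorial description of its nonzero terms as tuples of \emph{interlacing partitions} (invoked here as \cite{dinksmir2}, Proposition~7) to reorganize the series into the product over boxes.

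Your write-up is a strategy outline, not a proof, and the step you flag as ``the main obstacle'' is indeed the entire content of the theorem, which you do not carry out. Your proposed termwise factorization $F_{\lambda}(\dd)=\prod_{\square}(\hbar;q)_{\dd_\square}/(q;q)_{\dd_\square}$ is not what happens: look at (\ref{vf1}), where each summand contains cross-factors such as $\varphi(\hbar^{k-j+1}q^{d_{i+1,k}-d_{i,j}})$ coupling distinct Chern roots, so the summand does \emph{not} split as a product over boxes. The product form (\ref{prform}) appears only after the summation is performed, and controlling that sum (via the interlacing constraint and a nontrivial reindexing) is the actual work. None of your three suggested routes closes this gap as written: abelianization leaves the cross-terms intact (they are already present for the maximal torus of $G_{\mathsf{v}}$); the flag-variety comparison is what this paper runs in the \emph{opposite} direction in Section~4 (embedding the flag vertex into $\textbf{V}_\lambda$, not deducing $\textbf{V}_\lambda$ from it); and the $q$-difference-equation route is circular in this text, since (\ref{verqde}) is obtained \emph{from} (\ref{prform}) rather than from \cite{OS} --- to make that approach honest you would have to compute the scalar operator of \cite{OS} for $X_\lambda$ independently and identify it with $(1-z_\square)/(1-\hbar z_\square)$, which is a computation of the same order as the theorem itself.
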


Note that $z_{\square}$ can be expressed in terms of the $\zeta_i$ as
\begin{equation}\label{zzeta}
z_{\square} =  \frac{\zeta_{c_{\lambda}(\square)-l_{\lambda}(\square)-1}}{\zeta_{c_{\lambda}(\square)+a_{\lambda}(\square)}} \prod_{i=c_{\lambda}(\square)-l_{\lambda}(\square)}^{c_{\lambda}(\square)+a_{\lambda}(\square)} \left(\frac{\hbar}{q}\right)^{\sigma_{\lambda}(i)}
\end{equation}

Using our notation above, the set of all boxes with the same content as a given box $\square\in \lambda$ is denoted $C_{\lambda}(c_{\lambda}(\square))$, There is a minimal rectangular Young diagram $\mu\subset \lambda$ containing $C_{\lambda}(c_{\lambda}(\square))$. Explicitly, if $c_{\lambda}(\square)=i<0$, then $\mu=(\mu_1,\ldots,\mu_{\mathsf{v}_{i}})$ where $\mu_j=-i+v_{i}$. If $c_{\lambda}(\square)=i\geq 0$, then $\mu=(\mu_1,\ldots,\mu_{i+\mathsf{v}_i})$ where $\mu_j=\mathsf{v}_i$. With this notation, we define the slice inside $\lambda$ through $\square$ as:
$$
S_{\lambda}(\square):=\begin{cases}
\text{row containing} \,\, \square \subset \mu & \text{if} \  c_{\lambda}(\square)\geq 0 \\
\text{column containing} \,\, \square \subset \mu & \text{if} \ c_{\lambda}(\square)<0
\end{cases}
$$
where the row and column are understood in the ``rectangular" sense, see Figure \ref{yng3}.

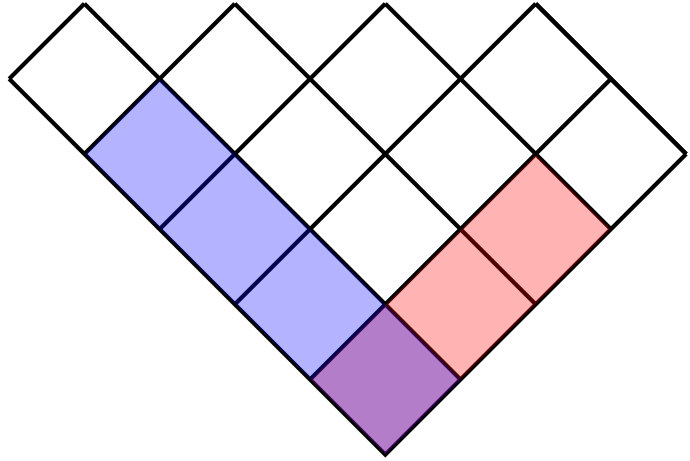
\begin{figure}[ht]
\centering
\begin{tikzpicture}[roundnode/.style={circle, draw=black, very thick, minimum size=5mm},squarednode/.style={rectangle, draw=black, very thick, minimum size=5mm}] 
\draw[ultra thick] (-5,5)--(0,0) -- (4,4);
\draw[ultra thick] (-4,6)--(1,1);
\draw[ultra thick] (-5,5)--(-4,6);
\draw[ultra thick] (-2,6)--(2,2);
\draw[ultra thick] (-4,4)--(-2,6);
\draw[ultra thick]  (0,6)--(3,3);
\draw[ultra thick] (2,6)--(4,4);
\draw[ultra thick] (-1,1)--(3,5);
\draw[ultra thick] (-2,2)--(2,6);
\draw[ultra thick] (-3,3)--(0,6);
\draw[ultra thick] (4,4)--(2,6);
\draw[fill=red,opacity=.3] (0,0)--(3,3)--(2,4)--(-1,1)--cycle;
\draw[fill=blue,opacity=.3] (0,0)--(-4,4)--(-3,5)--(1,1)--cycle;
\end{tikzpicture}
\caption{Two slices inside $\lambda$ are shown. The red boxes make up the slice through the box with content 1 and height 1. The blue boxes make up the slice through the box with content -2 and height 1.} \label{yng3}
\end{figure}

Combining Theorem \ref{tRS} with (\ref{prform}), we obtain the following corollary, giving an explicit combinatorial formula for the capped vertex with descendant.
\begin{Corollary}\label{cor}
\begin{align*}
    \hat{\textbf{V}}_{\lambda}^{(\tau_{n,r})}(\zz)= \hbar^{r(r-1)/2+\beta(n)} \sum_{\substack{I \subset C_{\lambda}(n) \\ |I|=r }} \prod_{\substack{\square \in I \\ \square' \notin I}} \frac{\hbar \zeta_{\square'} -  \zeta_{\square}}{\zeta_{\square'} - \zeta_{\square}} \prod_{\square\in I}\prod_{\square' \in S_{\lambda}(\square)} \frac{1-z_{\square'}}{1-\hbar z_{\square'}}
\end{align*}
\end{Corollary}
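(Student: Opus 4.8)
The plan is to obtain Corollary~\ref{cor} by feeding the product formula (\ref{prform}) into Theorem~\ref{tRS}, thereby reducing the entire statement to a combinatorial identity describing how the shift operators $p_\square$ act on $\textbf{V}_\lambda(\zz)$.

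First I would carry out the reduction. With the normalization $\hat{\textbf{V}}^{(\tau)}\mapsto\hat{\textbf{V}}^{(\tau)}/g(\zz)$ adopted in the text, Theorem~\ref{capping} gives $\hat{\textbf{V}}^{(\tau)}_\lambda(\zz)=\textbf{V}^{(\tau)}_\lambda(\zz)/\textbf{V}_\lambda(\zz)$, and taking $\tau=\tau_{n,r}$ and applying Theorem~\ref{tRS} turns this into $\hat{\textbf{V}}^{(\tau_{n,r})}_\lambda(\zz)=\big(T^{n,r}_\lambda\,\textbf{V}_\lambda(\zz)\big)/\textbf{V}_\lambda(\zz)$. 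In the operator $T^{n,r}_\lambda$ the rational coefficients $\prod_{\square\in I,\ \square'\in C_\lambda(n)\setminus I}\frac{\hbar\zeta_{\square'}-\zeta_\square}{\zeta_{\square'}-\zeta_\square}$ stand to the left of the shifts $p_I:=\prod_{\square\in I}p_\square$ and hence remain evaluated at the original variables; matching the sum over $I$ term by term against the right-hand side of the Corollary then reduces everything to proving
\[
\frac{p_I\,\textbf{V}_\lambda(\zz)}{\textbf{V}_\lambda(\zz)}\;=\;\prod_{\square\in I}\ \prod_{\square'\in S_\lambda(\square)}\frac{1-z_{\square'}}{1-\hbar z_{\square'}},\qquad I\subset C_\lambda(n),\ |I|=r.
\]

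To prove this I would write $\phi(x):=\prod_{i\ge 0}\frac{1-\hbar xq^i}{1-xq^i}$, so that by (\ref{prform}) one has $\textbf{V}_\lambda(\zz)=\prod_{\square\in\lambda}\phi(z_\square)$, with $\phi(q^ex)/\phi(x)=\prod_{i=0}^{e-1}\frac{1-xq^i}{1-\hbar xq^i}$ for $e\ge 0$ (and the reciprocal expression for $e<0$); in particular $\phi(qx)/\phi(x)=\frac{1-x}{1-\hbar x}$. By (\ref{zzeta}) the monomial $z_{\square'}$ depends on the $\zeta$-variables only through the ratio $\zeta_{c-l-1}/\zeta_{c+a}$, where $c=c_\lambda(\square')$, $l=l_\lambda(\square')$, $a=a_\lambda(\square')$, so each $p_\square$ rescales every $z_{\square'}$ by an integer power $q^{e_{\square'}(\square)}$ that can be read off from the index range in (\ref{pbox}). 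The plan is then to establish two claims: (i) for a single box $\square\in C_\lambda(n)$ one has $e_{\square'}(\square)=1$ if $\square'\in S_\lambda(\square)$ and $e_{\square'}(\square)=0$ otherwise; and (ii) for distinct $\square\in C_\lambda(n)$ the slices $S_\lambda(\square)$ are pairwise disjoint, being distinct rows (resp.\ columns) of the minimal rectangle $\mu\supset C_\lambda(n)$. Granting (i) and (ii): $p_I$ rescales $z_{\square'}$ by $q^{\sum_{\square\in I}e_{\square'}(\square)}$, an exponent equal to $1$ when $\square'\in\bigsqcup_{\square\in I}S_\lambda(\square)$ and $0$ otherwise, so
\[
\frac{p_I\,\textbf{V}_\lambda(\zz)}{\textbf{V}_\lambda(\zz)}=\prod_{\square'\in\,\bigsqcup_{\square\in I}S_\lambda(\square)}\frac{\phi(qz_{\square'})}{\phi(z_{\square'})}=\prod_{\square\in I}\ \prod_{\square'\in S_\lambda(\square)}\frac{1-z_{\square'}}{1-\hbar z_{\square'}},
\]
which is the required identity, and the Corollary follows.

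The hard part will be claim~(i): one must match the interval $\{c_\lambda(\square)+\mathsf{v}_{c_\lambda(\square)}-h_\lambda(\square),\dots,c_\lambda(\square)+a_\lambda(\square)\}$ from (\ref{pbox}) (and its counterpart for $c_\lambda(\square)<0$) against the two shift indices $c-l-1$ and $c+a$ attached to each box $\square'$ via (\ref{zzeta}), and verify both that exactly the boxes of $S_\lambda(\square)$ get rescaled and that the rescaling is by $q^{+1}$ — the sign is precisely what produces the factor $\frac{1-z_{\square'}}{1-\hbar z_{\square'}}$ rather than its $q$-shifted reciprocal, so it must be tracked carefully. This is a direct but somewhat intricate bookkeeping with the hook statistics $a_\lambda,l_\lambda,h_\lambda$ and the column sizes $\mathsf{v}_i$, carried out separately (and symmetrically) for $c_\lambda(\square)\ge 0$ and $c_\lambda(\square)<0$. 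Claim~(ii) is elementary, since a row or column of the rectangle $\mu$ meets the diagonal $C_\lambda(n)$ in at most one box, and once (i) and (ii) are in place the assembly above completes the proof.
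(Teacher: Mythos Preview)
Your proposal is correct and follows essentially the same route as the paper: reduce to $\hat{\textbf{V}}^{(\tau_{n,r})}_\lambda=(T^{n,r}_\lambda\textbf{V}_\lambda)/\textbf{V}_\lambda$ via Theorems~\ref{capping} and~\ref{tRS}, then use (\ref{prform}) together with the fact that $p_\square$ sends $z_{\square'}\mapsto qz_{\square'}$ exactly when $\square'\in S_\lambda(\square)$. The paper simply asserts this last fact (``from (\ref{pbox}) and (\ref{zzeta}) we see that\ldots'') and does not isolate the disjointness of slices, whereas you flag both as claims (i) and (ii) to be checked; this is a matter of exposition rather than a different argument.
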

\begin{proof}
By Theorem \ref{capping}, we know that $$ \hat{\textbf{V}}_{\lambda}^{(\tau_{n,r})}(\zz)= \frac{ \textbf{V}_{\lambda}^{(\tau_{n,r})}(\zz)}{ \textbf{V}_{\lambda}(\zz)}$$
Applying Theorem \ref{tRS}, we obtain
$$ \hat{\textbf{V}}_{\lambda}^{(\tau_{n,r})}(\zz)=\frac{T_{\lambda}^{n,r} \textbf{V}_{\lambda}(\zz)}{ \textbf{V}_{\lambda}(\zz)}$$
From (\ref{pbox}) and (\ref{zzeta}), we see that the operator $p_{\square}$ has the following effect on $z_{\square'}$:
$$p_{\square} \left(z_{\square'}\right) =
\begin{cases}
qz_{\square'}, & \square' \in S_{\lambda}(\square) \\
z_{\square'}, & \square' \notin S_{\lambda}(\square)
\end{cases}
$$
From (\ref{prform}), we see that the bare vertex function transforms under the scaling of a fixed $z_{\square}$ by $q$ as follows:
\begin{equation*}
    \textbf{V}_{\lambda}(\zz)\big|_{z_{\square}=qz_{\square}} = \frac{1-z_{\square}}{1-\hbar z_{\square}} \textbf{V}_{\lambda}(\zz)
\end{equation*}
Putting all this together, we have 
\begin{align*}
  &\hat{\textbf{V}}^{(\tau_{n,r})}(\zz) =  \frac{  T_{\lambda}^{n,r} \textbf{V}_{\lambda}(\zz)}{\textbf{V}_{\lambda}(\zz)} \\
  &= \frac{1}{\textbf{V}_{\lambda}(\zz)} \left(\hbar^{r(r-1)/2+\beta(n)} \sum_{\substack{I \subset C_{\lambda}(n) \\ |I|=r }} \prod_{\substack{\square \in I \\ \square' \notin I}} \frac{\hbar \zeta_{\square'} -  \zeta_{\square}}{\zeta_{\square'} - \zeta_{\square}} \prod_{\square\in I}\prod_{\square' \in S_{\lambda}(\square)} \frac{1-z_{\square'}}{1-\hbar z_{\square'}} \right) \textbf{V}_{\lambda}(\zz) \\
  &= \hbar^{r(r-1)/2+\beta(n)} \sum_{\substack{I \subset C_{\lambda}(n) \\ |I|=r }} \prod_{\substack{\square \in I \\ \square' \notin I}} \frac{\hbar \zeta_{\square'} -  \zeta_{\square}}{\zeta_{\square'} - \zeta_{\square}} \prod_{\square\in I}\prod_{\square' \in S_{\lambda}(\square)} \frac{1-z_{\square'}}{1-\hbar z_{\square'}}
\end{align*} 
\end{proof}
We note that for $|q|<1$, $\hat{\textbf{V}}_{\lambda}^{(\tau_{n,r})}(\zz)$ converges to a rational function of the K\"ahler parameters. Rationality of descendant insertions in the case of the cohomology of the Hilbert scheme of points in $\mathbb{C}^2$ was established in \cite{PPRationality}. It is expected that the same is true in $K$-theory for general Nakajima quiver varieties, see \cite{OkBethe}.

\section{Proof of Theorem \ref{tRS}}

\subsection{}
The vertex functions for type $A$ Nakajima quiver varieties can be described by natural integral representations of Mellin-Barnes type see Section 1.1.5-1.1.6 in \cite{OkBethe}. These integral representations were investigated for the quiver varieties isomorphic to cotangent bundles over Grassmannians and partial flag varieties in \cite{Pushk1,Pushk2,tRSKor}. We explain below what this description looks like.

Let $X$ be a Nakajima quiver variety arising from a type $A_n$ quiver with vertex set $I$, dimension vector $\textsf{v}$, and framing dimension vector $\textsf{w}$.

For a character $w_1+ \ldots + w_m \in K_{\bT}(X)$ we denote
\begin{equation*}\Phi(w_1+\ldots + w_m)=\varphi(w_1)\dots \varphi(w_m), \ \ \ \ \ \varphi(w):=\prod_{n=0}^{\infty}\left( 1-w q^n \right).
\end{equation*}
and extend this definition by linearity to polynomials in $K_\bT(X)$ with negative coefficients. Let $\mathcal{P}$ be the bundle over $X$ associated to the virtual $G$-module 
\be
\bigoplus_{i\in I} Hom(W_i,V_i) + \bigoplus_{i\to j} Hom(V_i,V_j) - \bigoplus_{i\in I} Hom(V_i,V_i)
\ee
where $i\to j$ denotes the sum over the arrows of the quiver. 

If $x_{i,1},\dots,x_{i,\mathsf{v}_i}$ denote the Grothendieck roots of $i$-th tautological bundle (i.e. the bundle over $X$ associated to the $G$-module $V_i$) and $a_{i,j}$ denote the equivariant parameters associated to the framings for $j=1,\ldots, \mathsf{w}_i$, then
\begin{align*}
    \mathcal{P}= \sum_{i \in I} \left( \sum_{j=1}^{\mathsf{v}_i} x_{i,j}^{-1} \right) \left( \sum_{j=1}^{\mathsf{v}_{i+1}} x_{i+1,j} \right) +\sum_{i \in I} \left( \sum_{j=1}^{\textsf{w}_i} a_{i,j}^{-1} \right) \left( \sum_{j=1}^{\mathsf{v}_i} x_{i,j} \right) \\
    - \sum_{i\in I} \left( \sum_{j=1}^{\mathsf{v}_i} x_{i,j}^{-1} \right) \left( \sum_{j=1}^{\mathsf{v}_i} x_{i,j} \right) \in K_\bT(X).
\end{align*}

We abbreviate the set of Grothendieck roots of the tautological bundles by $\bs{x}$ and define the following formal expression:
$$
{\bf e}({\bs x},\zz):=\exp\Big(\dfrac{1}{\ln{q}} \sum\limits_{ i \in I } \ln(z_i)  \ln(\det \mathcal{V}_i) \Big)=\exp\Big(\dfrac{1}{\ln{q}} \sum\limits_{ i \in I }\sum_{j=1}^{\textsf{v}_i} \ln(z_i)  \ln(x_{i,j}) \Big)
$$
For $t\in K_{\bT}(X)$ and $p\in X^{\bT}$, let $t|_p \in K_{\bT}(p)$ be the restriction of $t$. Then the restriction of the vertex function to a fixed point $p\in X^{\bT}$ is
\begin{equation} \label{verpow}
{\bf V}^{(\tau)}_p(\aa,\zz)= \frac{1}{\Phi\Big((q-\hbar) \mathcal{P}|_p \Big) {\bf e}({\bs x}|_p,\zz)} {\bf \tilde V}^{(\tau)}_p(\aa,\zz),
\end{equation}
where
\begin{equation*}\label{vertild}
\tilde{{\bf V}}^{(\tau)}_p(\aa,\zz)= \prod\limits_{i,j} \int\limits_{0}^{x_{i,j}|_p}  d_q x_{i,j} \, \Phi\Big((q-\hbar) \mathcal{P} \Big) {\bf e}({\bs x},\zz)\tau({\bs{x}})
\end{equation*}
and the integral symbols denote the Jackson $q$-integral over all Grothendieck roots:
$$
\int\limits_{0}^{a}  d_q x f(x) :=\sum\limits_{n=0}^{\infty}\, f(a q^n),
$$
For an indeterminate $x$, we define the $q$-Pochhammer symbol by
\begin{equation*}
(x)_d:=\frac{\varphi(x)}{\varphi(xq^d)} 
\end{equation*}
We note that (\ref{verpow}) is a power series in $z_i$ with coefficients given by combinations of $q$-Pochhammer symbols in the equivariant parameters. 

Equation (\ref{verpow}) arises when one analyzes the torus fixed points on the quasimap moduli space and computes the vertex function using $K$-theoretic equivariant localization.

\subsection{}
The cotangent bundle of the full flag variety can be described as a Nakajima quiver variety corresponding to the $A_{n-1}$ quiver with dimension vectors $\mathsf{v}=(n-1,n-2, \ldots,0)$ and $\mathsf{w}=(n,0,\ldots,0)$. In \cite{tRSKor}, Koroteev proves that the vertex function of the cotangent bundle of the full flag variety restricted to an appropriate fixed point is an eigenvector of the tRS operators, with eigenvalues given by the elementary symmetric functions of the equivariant parameters. 

More generally, we can allow some redundant information which disappears upon taking the quotient. We start with a partition $\lambda$ with associated dimension $\mathsf{v}$ as before, and some integer $n$ which will correspond to the content of a set of boxes in $\lambda$. If $n\geq 0$, we consider the quiver variety with dimensions given by the number of boxes in the Young diagram strictly to the right of the $n$th column. If $n<0$, we obtain our dimensions from the portion of the partition strictly to the left of the $n$th column. The number of boxes with content $n$ gives the framing dimension.

The corresponding quiver variety is canonically isomorphic to the cotangent bundle of the variety parameterizing full flags inside a $\mathsf{v}_n$-dimensional vector space. The vertex functions of the quiver varieties differ, as the vertex functions are sensitive to the way in which the quotient is taken. Because of this difference, we refer to the flag variety obtained by a partition $\lambda$ and choice of $n$ as a ``redundant flag variety."

For an example of this ``partition truncation", see Figure \ref{yng4}. In this notation, the usual quiver variety description of the cotangent bundle of the full flag variety can be obtained from square partitions.

\begin{figure}[ht]
\centering
\begin{tikzpicture}[roundnode/.style={circle, draw=black, very thick, minimum size=5mm},squarednode/.style={rectangle, draw=black, very thick, minimum size=5mm}] 
\draw[ultra thick, dashed] (-5,5)--(-1,1);
\draw[ultra thick] (-1,1)--(0,0) -- (5,5);
\draw[ultra thick] (4,6)--(5,5);
\draw[ultra thick,dashed] (-4,6)--(-1,3);
\draw[ultra thick] (-1,3)--(1,1);
\draw[ultra thick,dashed] (-5,5)--(-4,6);
\draw[ultra thick,dashed] (-2,6)--(-1,5);
\draw[ultra thick] (-1,5)--(2,2);
\draw[ultra thick,dashed] (-4,4)--(-2,6);
\draw[ultra thick]  (0,6)--(3,3);
\draw[ultra thick] (2,6)--(4,4);
\draw[ultra thick] (-1,1)--(4,6);
\draw[ultra thick] (-1,3)--(2,6);
\draw[ultra thick,dashed] (-2,2)--(-1,3);
\draw[ultra thick,dashed] (-3,3)--(-1,5);
\draw[ultra thick] (-1,5)--(0,6);
\draw[ultra thick] (4,4)--(2,6);
\node[squarednode](F) at (0,-1){3};
\node[roundnode](V1) at (1,-1){2};
\node[roundnode](V2) at (2,-1){2};
\node[roundnode](V3) at (3,-1){1};
\node[roundnode](V4) at (4,-1){1};
\draw[very thick, ->] (F) -- (V1);
\draw[very thick, ->] (V1) -- (V2);
\draw[very thick, ->] (V2) -- (V3);
\draw[very thick, ->] (V3) -- (V4);
\end{tikzpicture}
\caption{Obtaining the dimension information for a redundant flag variety from the partition $(5,4,3,2,1)$ with $n=0$. The framing corresponds to the square node.} \label{yng4}
\end{figure}
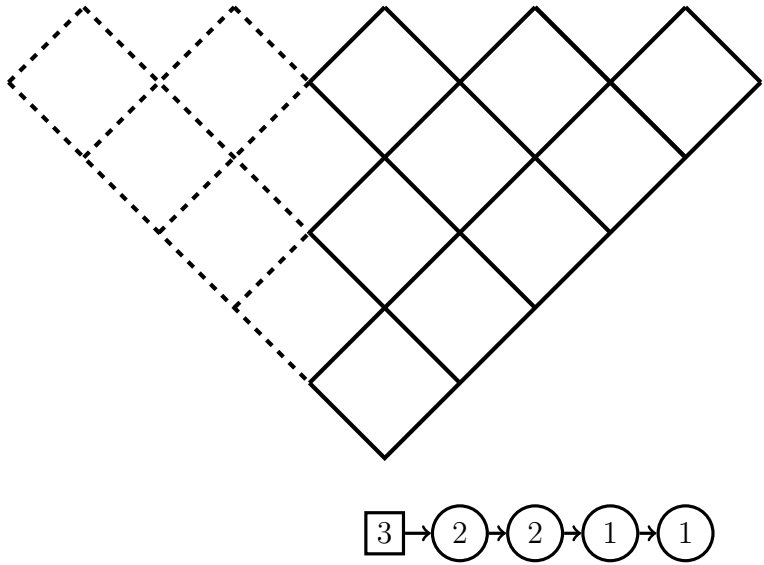

With this in mind, we can make sense of $\zeta_{\square}$ as before for $\square$ inside a truncated partition.

Let 
\begin{align*}
    T_r=\hbar^{r(r-1)/2} \sum_{\substack{I \subset C_{\lambda}(n) \\ |I|=r }} \prod_{\substack{\square \in I \\ \square' \notin I}} \frac{\hbar \zeta_{\square'} - \zeta_{\square}}{\zeta_{\square'} - \zeta_{\square}} \prod_{\square \in I} \widetilde{p}_{\square}
\end{align*}
where $\widetilde{p}_{\square}=\frac{a_{n-h_{\lambda}(\square)}}{\hbar^{n-h_{\lambda}(\square)-1}} p_{\square}$. 

As before, with the change of variables $z_{i}=\frac{\zeta_{i-1}}{\zeta_i}$, the operators $T_r$ act on the vertex functions of the redundant flag variety. In our notation, Koroteev's theorem can be generalized to the following:

\begin{Theorem}(\cite{tRSKor} Theorem 2.6)\label{Kor}
Fix $\lambda$ and $n$. Let ${\bf V}_p(\aa,\zz)$ be the bare vertex function of the associated redundant flag variety restricted to the fixed point $p$ at which the weights of the tautological bundle $\mathcal{V}_i$ are $\{a_{j} : 1\leq j \leq \mathsf{v}_i\}$ for all $i$. Then
\begin{align*}
   T_r {\bf V}_p(\aa,\zz)= e_r(\aa) {\bf V}_p(\aa,\zz)
\end{align*}
where $e_r(\aa)$ denotes the $r$th elementary symmetric function in the equivariant parameters.
\end{Theorem}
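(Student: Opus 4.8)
I would prove this by working directly with the Mellin--Barnes/Jackson-integral representation (\ref{verpow}) of the fixed-point vertex function and tracking how the operators $T_r$ act on it. The only $\zz$-dependent factor appearing in (\ref{verpow}) is ${\bf e}({\bs x},\zz)$, and by its very definition the elementary shift $z_i\mapsto qz_i$ multiplies ${\bf e}$ by $\det\mathcal{V}_i=\prod_j x_{i,j}$. Since each $\widetilde p_\square$ is a composite of such shifts together with the explicit scalar $a_{n-h_\lambda(\square)}\hbar^{-(n-h_\lambda(\square)-1)}$, applying $T_r$ to ${\bf V}_p$ and moving the operator through (\ref{verpow}) produces the same expression with an extra rational factor inserted into the integrand,
\[
\widetilde T_r({\bs x})=\hbar^{r(r-1)/2}\sum_{\substack{I\subset C_\lambda(n)\\ |I|=r}}\ \prod_{\substack{\square\in I\\ \square'\notin I}}\frac{\hbar\zeta_{\square'}-\zeta_\square}{\zeta_{\square'}-\zeta_\square}\ \prod_{\square\in I}m_\square({\bs x}),
\]
where $m_\square({\bs x})$ is the Laurent monomial in the Grothendieck roots coming from $\widetilde p_\square$. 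The theorem then amounts to showing that, under the $q$-integral and specialized at the fixed point $p$ at which the weights of $\mathcal{V}_i$ are $\{a_j\}$, the factor $\widetilde T_r$ may be replaced by the constant $e_r(\aa)$.

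\textbf{The key step.} To establish this replacement I would use the quasi-periodicity $\varphi(qw)=\varphi(w)/(1-w)$ of the building block of $\Phi\big((q-\hbar)\mathcal{P}\big)$ to trade each monomial shift hidden in $m_\square$ for a shift of the Jackson summation variables; the Jackson integral of a total $q$-difference vanishes, and the residue terms at $x_{i,j}=0$ produced along the way cancel in the alternating sum over the subsets $I$ precisely because the rational weights $\prod_{\square\in I,\ \square'\notin I}\frac{\hbar\zeta_{\square'}-\zeta_\square}{\zeta_{\square'}-\zeta_\square}$ are the Lagrange-interpolation coefficients adapted to the diagonal $C_\lambda(n)$. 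After this reorganization $\widetilde T_r$ collapses to $e_r(\aa)$; the residual combinatorial identity one must check --- that the $I$-sum of interpolation weights against the relevant monomials equals $e_r(\aa)$ --- is of the type familiar from the eigenvalue theory of the trigonometric Ruijsenaars--Schneider/Macdonald operators, and may be proved by induction on $\mathsf{v}_n=|C_\lambda(n)|$ or imported from the genuine $A_{\mathsf{v}_n-1}$ flag variety treated in \cite{tRSKor}.

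\textbf{Redundancy and alternatives.} The only genuinely new feature compared with Koroteev's original statement is that $\lambda$ may exceed the rectangle cut out by $C_\lambda(n)$, so the redundant flag variety carries extra tautological directions; comparing (\ref{pbox}) with the ranges of shifts occurring in $\widetilde p_\square$ shows that these extra directions contribute only trivial $z_i$-shifts and hence neither enter $\widetilde T_r$ nor alter the eigenvalue. A cleaner but less self-contained route would be to observe that, up to the explicit prefactor in (\ref{verpow}), ${\bf V}_p(\aa,\zz)$ is the unique $\zz$-power series solving the quantum difference equation of the redundant flag variety with leading term the class of $p$, that the $T_r$ form the commuting Macdonald/tRS Hamiltonians of that equation, and that the torus fixed points diagonalize its classical ($\zz\to 0$) limit with eigenvalues $e_r(\aa)$; the difference equation then propagates the eigenvalue relation to all orders in $\zz$. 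I expect the residue-and-contour bookkeeping in the key step to be the main obstacle, with the interpolation identity a close second.
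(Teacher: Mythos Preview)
The paper does not supply its own proof of this statement: Theorem~\ref{Kor} is quoted verbatim as a result of Koroteev, with the citation \cite{tRSKor} attached and no argument given. The authors treat it as an external input, so there is no ``paper's own proof'' to compare your proposal against.

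On the substance of your sketch: the overall architecture is sound --- the only $\zz$-dependence in (\ref{verpow}) is indeed through ${\bf e}({\bs x},\zz)$, and pushing the shifts $p_i$ through the Jackson integral to produce monomials in the Grothendieck roots is the right first move. Where your argument becomes thin is the ``key step'': the assertion that residue terms cancel ``precisely because'' the rational weights are Lagrange-interpolation coefficients is more of a slogan than an argument, and in practice Koroteev's proof proceeds by first doing the innermost Jackson integral explicitly (a $q$-binomial/Cauchy type sum), then observing that the resulting function is already a Macdonald-operator eigenfunction in the outer variables. Your plan to ``trade monomial shifts for shifts of the Jackson summation variables'' via $\varphi(qw)=\varphi(w)/(1-w)$ is correct in spirit but you have not said which summation index absorbs which shift, and without that the cancellation you invoke is not visible. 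The alternative route you mention at the end --- uniqueness of the holomorphic solution to the quantum difference equation plus the $\zz\to 0$ eigenvalue computation --- is in fact the cleanest way to close the gap, and would make the redundancy observation (that extra columns of $\lambda$ beyond the rectangle contribute only trivially) immediate.
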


\subsection{}
Let $\lambda$ be a partition so that the associated dimension vector is $\mathsf{v}=(\mathsf{v}_{-r}, \ldots, \mathsf{v}_s)$. Fix $n\in \mathbb{Z}$. For definiteness, we assume $n\geq 0$. Let $\textbf{V}_p(\aa, \zz)$ be as in Theorem \ref{Kor}. In other words, $\textbf{V}_p(\aa, \zz)$ is the restriction to a fixed point of the vertex function of the quiver variety with dimension $(\mathsf{v}_{n+1}, \ldots, \mathsf{v}_s)$ and framing dimension $(\mathsf{v}_n, 0, \ldots, 0)$.

\begin{Note}
The proof of Theorem \ref{tRS} involves interpreting certain terms in the power series for $\textbf{V}_{\lambda}(\zz)$ as vertex functions for redundant flag varieties with specialized equivariant parameters. For definiteness, we will assume throughout that $n\geq 0$. If $n<0$, the same arguments given below can be modified in the obvious manner.
\end{Note}

\begin{Lemma}\label{lem1} Specializing the equivariant parameters to $a_i=\hbar^{i-1} q^{d_{n,i}}$ and relabeling the K\"ahler paramters in $\textbf{V}_p(\aa,\zz)$, we have
\begin{align*}
  \textbf{V}_{\lambda}(\zz)= \sum_{\substack{d_{i,j}}} \Psi \prod_{i=-r}^{n} \prod_{j=1}^{\mathsf{v}_i} z_{i}^{d_{i,j}} \prod_{i=n+1}^{s} \prod_{j=1}^{\mathsf{v}_i} z_{i}^{d_{n,j}}  {\bf V}_p(\aa,z_{n+1},\ldots,z_{s}) \big|_{a_i=\hbar^{i-1} q^{d_{n,i}}} 
\end{align*}
where $\Psi$ represents $\varphi$ function terms in (\ref{verpow}) that do not depend on $d_{i,j}$ for $i>n$. 
\end{Lemma}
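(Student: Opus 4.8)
The plan is to rewrite the Mellin--Barnes integral representation (\ref{verpow}) of $\textbf{V}_{\lambda}(\zz)$ by splitting off the subquiver of the $A_{\infty}$ quiver lying strictly to the right of vertex $n$, which is precisely the redundant flag variety of Theorem \ref{Kor}. First I would write $\textbf{V}_{\lambda}(\zz)$ using (\ref{verpow}) at the unique $\bT$-fixed point $p_{\lambda}$ of $X_{\lambda}$: expanding all Jackson $q$-integrals, this exhibits $\textbf{V}_{\lambda}(\zz)$ as a sum over all degree tuples $\dd=(d_{i,j})$, $-r\le i\le s$, $1\le j\le\textsf{v}_i$, of the summand $\Phi\big((q-\hbar)\mathcal{P}\big)\,{\bf e}(\bs{x},\zz)$ with each Grothendieck root $x_{i,j}$ specialized to $x_{i,j}|_{p_{\lambda}}\,q^{d_{i,j}}$, normalized by its $\dd=0$ value $\Phi\big((q-\hbar)\mathcal{P}|_{p_{\lambda}}\big){\bf e}(\bs{x}|_{p_{\lambda}},\zz)$. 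I would then record the restrictions of the tautological bundles to $p_{\lambda}$: for $n\ge 0$ one has $\mathcal{V}_n|_{p_{\lambda}}=\sum_{j=1}^{\textsf{v}_n}\hbar^{\,j-1}$, the $j$-th root being attached to the box of content $n$ and height $j$, and more generally $\mathcal{V}_i|_{p_{\lambda}}$ for $i\ge n$ is a monomial character in $\hbar$ read off from the heights.

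Next I would split the polarization bundle as $\mathcal{P}=\mathcal{P}_{\le n}+\mathcal{P}_{>n}$, where $\mathcal{P}_{\le n}$ collects the summands of $\mathcal{P}$ built only from the roots $x_{i,j}$ with $i\le n$ --- the bonds $\mathrm{Hom}(V_i,V_{i+1})$ for $i<n$, the framing $\mathrm{Hom}(W_0,V_0)$ (which lies here since $0\le n$), and $-\mathrm{Hom}(V_i,V_i)$ for $i\le n$ --- while $\mathcal{P}_{>n}$ collects the bridge $\mathrm{Hom}(V_n,V_{n+1})$ together with $\mathrm{Hom}(V_i,V_{i+1})$ and $-\mathrm{Hom}(V_i,V_i)$ for $i>n$. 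The key point is that after substituting $x_{n,j}=a_j:=\hbar^{\,j-1}q^{d_{n,j}}$ into the bridge, $\mathcal{P}_{>n}$ becomes term by term the polarization of the Nakajima quiver variety on the vertex set $\{n+1,\dots,s\}$ with dimension vector $(\textsf{v}_{n+1},\dots,\textsf{v}_s)$, framing $\textsf{v}_n$ at vertex $n+1$, and framing parameters $a_1,\dots,a_{\textsf{v}_n}$ --- the redundant flag variety --- and the fixed point $p$ of that variety at which $\mathcal{V}_i$ has weights $\{a_j\}$ is precisely the one induced by restricting $p_{\lambda}$ to the right subquiver. Hence $\Phi\big((q-\hbar)\mathcal{P}\big)$ factors into the $\varphi$-terms coming from $\mathcal{P}_{\le n}$, which depend only on $(d_{i,j})_{i\le n}$ and which I would collect into $\Psi$, times the $\Phi$-factor occurring inside $\textbf{V}_p(\aa,z_{n+1},\dots,z_s)\big|_{a_i=\hbar^{i-1}q^{d_{n,i}}}$, the residual sum over the degrees at vertices $>n$ turning into the Jackson sum defining $\textbf{V}_p$. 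The outer sum then runs only over $(d_{i,j})_{-r\le i\le n}$.

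It remains to chase the K\"ahler prefactor ${\bf e}(\bs{x},\zz)/{\bf e}(\bs{x}|_{p_{\lambda}},\zz)=\prod_{i,j}z_i^{d_{i,j}}$: the vertices $i\le n$ produce $\prod_{i=-r}^{n}\prod_{j}z_i^{d_{i,j}}$, while the vertices $i>n$ produce $\prod_{i=n+1}^{s}\prod_{j}z_i^{d_{i,j}}$, which, under the reindexing that converts the node-$n$ data into a framing, separates into the correction factor $\prod_{i=n+1}^{s}\prod_{j=1}^{\textsf{v}_i}z_i^{d_{n,j}}$ and the K\"ahler prefactor internal to $\textbf{V}_p$ after the obvious relabeling of its K\"ahler variables. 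Collecting all pieces yields the asserted formula; the case $n<0$ is the mirror image, splitting off the vertices to the left of $n$ instead. I expect the main obstacle to be exactly this middle step: verifying the fixed-point weights of the tautological bundles at vertices $\ge n$ on $X_{\lambda}$, matching the induced fixed point of the redundant flag variety with the one in Theorem \ref{Kor}, and aligning the ranges of the Jackson summations together with all $\hbar$-exponents and $q$-shifts so that exactly the correction factor $\prod_{i=n+1}^{s}\prod_{j}z_i^{d_{n,j}}$ --- and nothing more --- is generated; once the two integral representations are matched term by term, the polarization split and the ${\bf e}$-bookkeeping are routine.
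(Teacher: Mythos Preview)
Your proposal is correct and follows essentially the same route as the paper: write out the localized power series (\ref{verpow}) for $\textbf{V}_{\lambda}$, split the polarization at vertex $n$, and recognize the right-hand piece as the vertex function of the redundant flag variety with equivariant parameters specialized to $a_i=\hbar^{i-1}q^{d_{n,i}}$. The paper carries this out by brute force, writing every $\varphi$-factor explicitly, while you phrase it via the decomposition $\mathcal{P}=\mathcal{P}_{\le n}+\mathcal{P}_{>n}$; the content is the same.

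Two points the paper handles explicitly that you only anticipate as obstacles deserve emphasis. First, the reindexing that turns the residual sum over $(d_{i,j})_{i>n}$ into the Jackson sum defining $\textbf{V}_p$ is $f_{i,j}=d_{i,j}-d_{n,j}$, and one needs the interlacing constraint on nonzero terms (Proposition~7 of \cite{dinksmir2}) to guarantee $f_{i,j}\ge 0$, so that the summation ranges actually match. Second, your claim that $\Psi$ collects only the $\varphi$-terms from $\mathcal{P}_{\le n}$ is not quite right: $\textbf{V}_p$ carries its own normalization $1/\Phi\big((q-\hbar)\mathcal{P}_{>n}|_p\big)$, and after specializing $a_i=\hbar^{i-1}q^{d_{n,i}}$ this differs from the corresponding piece $1/\Phi\big((q-\hbar)\mathcal{P}_{>n}|_{p_\lambda}\big)$ in the normalization of $\textbf{V}_\lambda$. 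The resulting ratio is a product of $\varphi$-terms depending on $d_{n,k}-d_{n,j}$, which the paper simplifies telescopically to a product over the staircase indices $j$ with $\mathsf{v}_j-\mathsf{v}_{j+1}=1$ and then checks is already absorbed by the $\Psi$ coming from the left-hand side. This extra normalization ratio is precisely the ``alignment of $\hbar$-exponents and $q$-shifts'' you flag, and it is the only nonobvious cancellation in the argument.
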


% \begin{align*}
%     \textbf{V}_{\lambda}(\zz) &= \sum_{\substack{d_{i,j} \\ i<l}} X \sum_{\substack{d_{i,j}\\ i\geq l}} \prod_{i=l}^{\mathsf{v}_l-2} \prod_{j=1}^{\mathsf{v}_l-i+l} \prod_{k=1}^{\mathsf{v}_l-i+l-1} \frac{\varphi\left(q \hbar^{k-j} q^{d_{i+1,k}-d_{i,j}}\right) \varphi\left(\hbar \hbar^{k-j}\right)}{\varphi\left(q \hbar^{k-j}\right) \varphi\left(\hbar \hbar^{k-j} q^{d_{i+1,k}-d_{i,j}}\right)} \\ & \prod_{i=l}^{\mathsf{v}_l-1} \prod_{j,k=1}^{\mathsf{v}_l-i+l} \frac{\varphi\left(\hbar \hbar^{k-j} q^{d_{i,k}-d_{i,j}}\right) \varphi\left(q \hbar^{k-j} \right)}{\varphi\left(\hbar \hbar^{k-j}\right) \varphi\left(q \hbar^{k-j} q^{d_{i,k}-d_{i,j}}\right)} \prod_{i=l+1}^{\mathsf{v}_l-1} \prod_{j=1}^{\mathsf{v}_l-i+l} z_i^{d_{i,j}}
% \end{align*}
% where $X$ is everything else that needs to be here.

\begin{proof}

From (\ref{verpow}) vertex function for $X_{\lambda}$ is 
\begin{align}\nonumber
  \textbf{V}_{\lambda}(\zz)  = \sum_{d_{i,j}} \prod_{j=1}^{\mathsf{v}_0}  \frac{\varphi(\hbar^j)}{\varphi(\hbar^j q^{d_{0,j}})} \frac{\varphi(q\hbar^{j-1}q^{d_{0,j}})}{\varphi(q\hbar^{j-1})}  \\ \nonumber
  \prod_{i<0} \prod_{j=1}^{\mathsf{v}_i} \prod_{k=1}^{\mathsf{v}_{i+1}}  \frac{\varphi(\hbar^{k-j})}{\varphi(\hbar^{k-j} q^{d_{i+1,k}-d_{i,j}})} \frac{\varphi(q \hbar^{k-j-1} q^{d_{i+1,k}-d_{i,j}})}{\varphi(q \hbar^{k-j-1} )}  \\  \nonumber
    \prod_{i\geq 0} \prod_{j=1}^{\mathsf{v}_i} \prod_{k=1}^{\mathsf{v}_{i+1}}   \frac{\varphi(\hbar^{k-j+1})}{\varphi(\hbar^{k-j+1} q^{d_{i+1,k}-d_{i,j}})} \frac{\varphi(q\hbar^{k-j} q^{d_{i+1,k}-d_{i,j}})}{\varphi(q\hbar^{k-j})}  \\ \label{vf1}
    \prod_{i\in \mathbb{Z}} \prod_{j,k=1}^{\mathsf{v}_i} \frac{\varphi(q\hbar^{k-j})}{\varphi(q\hbar^{k-j} q^{d_{i,k}-d_{i,j}})} \frac{\varphi(\hbar^{k-j+1} q^{d_{i,k}-d_{i,j}})}{\varphi(\hbar^{k-j+1})} \, \zz^\dd
\end{align}
where each $d_{i,j}$ is summed from $0$ to $\infty$.

Separating the terms corresponding to the $i$th column for $i>n$, we have
\begin{align*}
    \textbf{V}_{\lambda}(\zz) &= \sum_{d_{i,j}} \Psi \prod_{i=-r}^{n} \prod_{j=1}^{\mathsf{v}_i} z_{i}^{d_{i,j}} \prod_{j=1}^{\mathsf{v}_n} \prod_{k=1}^{\mathsf{v}_{n+1}} \frac{\varphi\left(q \hbar^{i-j} q^{d_{n+1,k}-d_{n,j}} \right) }{\varphi\left(\hbar \hbar^{i-j} q^{d_{n+1,k}-d_{n,j}} \right)}
    \\ & \prod_{i=n+1}^{s-1} \prod_{j=1}^{\mathsf{v}_i} \prod_{k=1}^{\mathsf{v}_{i+1}} \frac{\varphi\left(q \hbar^{k-j} q^{d_{i+1,k}-d_{i,j}}\right) }{ \varphi\left(\hbar \hbar^{k-j} q^{d_{i+1,k}-d_{i,j}}\right)} \\ & \prod_{i=n+1}^{s} \prod_{j,k=1}^{\mathsf{v}_i} \frac{\varphi\left(\hbar \hbar^{k-j} q^{d_{i,k}-d_{i,j}}\right) }{\varphi\left(q \hbar^{k-j} q^{d_{i,k}-d_{i,j}}\right)} \prod_{i=n+1}^{s} \prod_{j=1}^{\mathsf{v}_i} z_i^{d_{i,j}}
\end{align*}
where $\Psi$ represents some $\varphi$ function terms, which do not depend on $d_{i,j}$ for $i> n$.

On the other hand, from (\ref{verpow}) the vertex function of the redundant flag variety arising from a partition $\lambda$ and integer $n$ at the fixed point $p$ has the form
% \begin{align*}
%     {\bf V}_p(\aa,\zz) &= \sum_{d_{i,j}} \zz^{d} \prod_{j=1}^n \prod_{i-1}^{n-1} \frac{\varphi\left(q\frac{x_{1,i}}{a_{j}}q^{d_{1,i}}\right)}{\varphi\left(q\frac{x_{1,i}}{a_{j}}\right)} \frac{\varphi\left(\hbar\frac{x_{1,i}}{a_{j}}\right)}{\varphi\left(\hbar\frac{x_{1,i}}{a_{j}}q^{d_{1,i}}\right)}  \\
%     & \prod_{i=1}^{n-2} \prod_{j=1}^{n-i} \prod_{k=1}^{n-i-1} \frac{\varphi\left(q\frac{x_{i+1,k}}{x_{i,j}}q^{d_{i+1,k}-d_{i,j}}\right)}{\varphi\left(q\frac{x_{i+1,k}}{x_{i,j}}\right)} \frac{\varphi\left(\hbar\frac{x_{i+1,k}}{x_{i,j}}\right)}{\varphi\left(\hbar \frac{x_{i+1,k}}{x_{i,j}}q^{d_{i+1,k}-d_{i,j}}\right)} \\
%     & \prod_{i=1}^{n-1} \prod_{j,k=1}^{n-i} \frac{\varphi\left(\hbar\frac{x_{i,k}}{x_{i,j}}q^{d_{i,k}-d_{i,j}}\right)}{\varphi\left(\hbar\right)} \frac{\varphi\left(q\right)}{\varphi\left(q\frac{x_{i,k}}{x_{i,j}}q^{d_{i,k}-d_{i,j}}\right)}
% \end{align*}
% where the summation is over all $d_{i,j}$ from 0 to $\infty$ for $1\leq i \leq n-1$ and $1\leq j \leq n-i$.

\begin{align*}
    {\bf V}_p(\aa, z_{n+1}, \ldots, z_s) &= \sum_{f_{i,j}} \prod_{j=1}^{\mathsf{v}_n} \prod_{k=1}^{\mathsf{v}_{n}+1} \frac{\varphi\left(q\frac{a_{k}}{a_{j}}q^{f_{n+1,k}}\right)}{\varphi\left(q\frac{a_{k}}{a_{j}}\right)} \frac{\varphi\left(\hbar\frac{a_{k}}{a_{j}}\right)}{\varphi\left(\hbar\frac{a_{k}}{a_{j}}q^{f_{n+1,k}}\right)}  \\
    & \prod_{i=n+1}^{s-1} \prod_{j=1}^{\mathsf{v}_i} \prod_{k=1}^{\mathsf{v}_{i+1}} \frac{\varphi\left(q\frac{a_{k}}{a_{j}}q^{f_{i+1,k}-f_{i,j}}\right)}{\varphi\left(q\frac{a_{k}}{a_{j}}\right)} \frac{\varphi\left(\hbar\frac{a_{k}}{a_{j}}\right)}{\varphi\left(\hbar \frac{a_{k}}{a_{j}}q^{f_{i+1,k}-f_{i,j}}\right)} \\
    & \prod_{i=n+1}^{s} \prod_{j,k=1}^{\mathsf{v}_i} \frac{\varphi\left(\hbar\frac{a_{k}}{a_{j}}q^{f_{i,k}-f_{i,j}}\right)}{\varphi\left(\hbar \frac{a_k}{a_j}\right)} \frac{\varphi\left(q \frac{a_k}{a_j}\right)}{\varphi\left(q\frac{a_{k}}{a_{j}}q^{f_{i,k}-f_{i,j}}\right)} \prod_{i=n+1}^{s} \prod_{j=1}^{\mathsf{v}_i} z_i^{f_{i,j}}
\end{align*}
% where the sum is over the variables $f_{i,j}$, indexed so that $n+1\leq i \leq s$ and $1\leq j \leq \mathsf{v}_i$. Moving some terms around, this becomes
% \begin{align*}
%     {\bf V}_p(\aa, z_{n+1}, \ldots, z_s) &= \sum_{f_{i,j}} \prod_{j=1}^{\mathsf{v}_n} \prod_{k=1}^{\mathsf{v}_{n}+1} \frac{\varphi\left(q\frac{a_{k}}{a_{j}}q^{f_{n+1,k}}\right)}{\varphi\left(\hbar\frac{a_{k}}{a_{j}}q^{f_{n+1,k}}\right)} \frac{\varphi\left(\hbar\frac{a_{k}}{a_{j}}\right)}{\varphi\left(q\frac{a_{k}}{a_{j}}\right)}  \\
%     & \prod_{i=n+1}^{s-1} \prod_{j=1}^{\mathsf{v}_i} \prod_{k=1}^{\mathsf{v}_{i+1}} \frac{\varphi\left(q\frac{a_{k}}{a_{j}}q^{f_{i+1,k}-f_{i,j}}\right)}{\varphi\left(\hbar \frac{a_{k}}{a_{j}}q^{f_{i+1,k}-f_{i,j}}\right)} \frac{\varphi\left(\hbar\frac{a_{k}}{a_{j}}\right)}{\varphi\left(q\frac{a_{k}}{a_{j}}\right)} \\
%     & \prod_{i=n+1}^{s} \prod_{j,k=1}^{\mathsf{v}_i} \frac{\varphi\left(\hbar\frac{a_{k}}{a_{j}}q^{f_{i,k}-f_{i,j}}\right)}{\varphi\left(q\frac{a_{k}}{a_{j}}q^{f_{i,k}-f_{i,j}}\right)} \frac{\varphi\left(q \frac{a_k}{a_j}\right)}{\varphi\left(\hbar \frac{a_k}{a_j}\right)} \prod_{i=n+1}^{s} \prod_{j=1}^{\mathsf{v}_i} z_i^{f_{i,j}}
% \end{align*}

We reindex the summation as follows. First, replace $f_{n,j}$ by $d_{n,j}$. Second, substitute $f_{i,j}$ by $d_{i,j}-d_{n,j}$ for $i>n$. By examining the terms in $\textbf{V}_{\lambda}(\zz)$, we see that a term in the sum is only nonzero if and only if the $d_{i,j}$ give a set of interlacing partitions (see \cite{dinksmir2} Proposition 7). In particular, we must have $d_{i,j}-d_{n,j}\geq 0$ and so the sum for $\textbf{V}_p(\aa,z_{n+1},\ldots, z_s)$ can be reindexed as 

\begin{align*}
      &{\bf V}_p(\aa, z_{n+1}, \ldots, z_s) = \sum_{d_{i,j}} \prod_{j=1}^{\mathsf{v}_n} \prod_{k=1}^{\mathsf{v}_{n}+1} \frac{\varphi\left(q\frac{a_{k}}{a_{j}}q^{d_{n+1,k}-d_{n,k}}\right)}{\varphi\left(\hbar\frac{a_{k}}{a_{j}}q^{d_{n+1,k}-d_{n,k}}\right)} \frac{\varphi\left(\hbar\frac{a_{k}}{a_{j}}\right)}{\varphi\left(q\frac{a_{k}}{a_{j}}\right)}  \\
    & \prod_{i=n+1}^{s-1} \prod_{j=1}^{\mathsf{v}_i} \prod_{k=1}^{\mathsf{v}_{i+1}} \frac{\varphi\left(q\frac{a_{k}}{a_{j}}q^{d_{i+1,k}-d_{n,k}-(d_{i,j}-d_{n,j})}\right)}{\varphi\left(\hbar \frac{a_{k}}{a_{j}}q^{d_{i+1,k}-d_{n,k}-(d_{i,j}-d_{n,j})}\right)} \frac{\varphi\left(\hbar\frac{a_{k}}{a_{j}}\right)}{\varphi\left(q\frac{a_{k}}{a_{j}}\right)} \\
    & \prod_{i=n+1}^{s} \prod_{j,k=1}^{\mathsf{v}_i} \frac{\varphi\left(\hbar\frac{a_{k}}{a_{j}}q^{d_{i,k}-d_{n,k}-(d_{i,j}-d_{n,j})}\right)}{\varphi\left(q\frac{a_{k}}{a_{j}}q^{d_{i,k}-d_{n,k}-(d_{i,j}-d_{n,j})}\right)} \frac{\varphi\left(q \frac{a_k}{a_j}\right)}{\varphi\left(\hbar \frac{a_k}{a_j}\right)} \prod_{i=n+1}^{s} \prod_{j=1}^{\mathsf{v}_i} z_i^{d_{i,j}-d_{n,j}}
\end{align*}

Next, we substitute $a_i=\hbar^{i-1}q^{d_{n,i}}$ and obtain

\begin{align*}
     & {\bf V}_p(\aa, z_{n+1}, \ldots, z_s)\big|_{a_i=\hbar^{i-1}q^{d_{n,i}}} \\
     &= \sum_{d_{i,j}} \prod_{j=1}^{\mathsf{v}_n} \prod_{k=1}^{\mathsf{v}_{n}+1} \frac{\varphi\left(q\hbar^{k-j} q^{d_{n+1,k}-d_{n,j}}\right)}{\varphi\left(\hbar \hbar^{k-j} q^{d_{n+1,k}-d_{n,j}}\right)} \frac{\varphi\left(\hbar\hbar^{k-j} q^{d_{n,k}-d_{n,j}}\right)}{\varphi\left(q\hbar^{k-j} q^{d_{n,k}-d_{n,j}}\right)}  \\
    & \prod_{i=n+1}^{s-1} \prod_{j=1}^{\mathsf{v}_i} \prod_{k=1}^{\mathsf{v}_{i+1}} \frac{\varphi\left(q \hbar^{k-j} q^{d_{i+1,k}-d_{i,j}}\right)}{\varphi\left(\hbar \hbar^{k-j} q^{d_{i+1,k}-d_{i,j}}\right)} \frac{\varphi\left(\hbar\hbar^{k-j}q^{d_{n,k}-d_{n,j}}\right)}{\varphi\left(q\hbar^{k-j}q^{d_{n,k}-d_{n,j}}\right)} \\
    & \prod_{i=n+1}^{s} \prod_{j,k=1}^{\mathsf{v}_i} \frac{\varphi\left(\hbar\hbar^{k-j}q^{d_{i,k}-d_{i,j}}\right)}{\varphi\left(q\hbar^{k-j}q^{d_{i,k}-d_{i,j}}\right)} \frac{\varphi\left(q \hbar^{k-j} q^{d_{n,k}-d_{n,j}}\right)}{\varphi\left(\hbar \hbar^{k-j} q^{d_{n,k}-d_{n,j}}\right)} \prod_{i=n+1}^{s} \prod_{j=1}^{\mathsf{v}_i} z_i^{d_{i,j}-d_{n,j}}
\end{align*}
We simplify some of the terms in the above summation:
\begin{align*}
      & \prod_{j=1}^{\mathsf{v}_n} \prod_{k=1}^{\mathsf{v}_{n}+1}  \frac{\varphi\left(\hbar\hbar^{k-j} q^{d_{n,k}-d_{n,j}}\right)}{\varphi\left(q\hbar^{k-j} q^{d_{n,k}-d_{n,j}}\right)}  \prod_{i=n+1}^{s-1} \prod_{j=1}^{\mathsf{v}_i} \prod_{k=1}^{\mathsf{v}_{i+1}}  \frac{\varphi\left(\hbar\hbar^{k-j}q^{d_{n,k}-d_{n,j}}\right)}{\varphi\left(q\hbar^{k-j}q^{d_{n,k}-d_{n,j}}\right)} \\ &\prod_{i=n+1}^{s} \prod_{j,k=1}^{\mathsf{v}_i}  \frac{\varphi\left(q \hbar^{k-j} q^{d_{n,k}-d_{n,j}}\right)}{\varphi\left(\hbar \hbar^{k-j} q^{d_{n,k}-d_{n,j}}\right)} \\
      &=  \prod_{i=n}^{s-1} \prod_{j=1}^{\mathsf{v}_i} \prod_{k=1}^{\mathsf{v}_{i+1}}  \frac{\varphi\left(\hbar\hbar^{k-j}q^{d_{n,k}-d_{n,j}}\right)}{\varphi\left(q\hbar^{k-j}q^{d_{n,k}-d_{n,j}}\right)} \prod_{i=n+1}^{s} \prod_{j,k=1}^{\mathsf{v}_i}  \frac{\varphi\left(q \hbar^{k-j} q^{d_{n,k}-d_{n,j}}\right)}{\varphi\left(\hbar \hbar^{k-j} q^{d_{n,k}-d_{n,j}}\right)} \\
      &= \prod_{i=n}^{s-1} \prod_{j=\mathsf{v}_{i+1}+1}^{\mathsf{v}_i} \prod_{k=1}^{\mathsf{v}_{i+1}} \frac{\varphi\left(\hbar\hbar^{k-j}q^{d_{n,k}-d_{n,j}}\right)}{\varphi\left(q\hbar^{k-j}q^{d_{n,k}-d_{n,j}}\right)} \\
      &=\prod_{\substack{j \\ \mathsf{v}_j-\mathsf{v}_{j+1}=1}} \prod_{k=1}^{\mathsf{v}_{j+1}} \frac{\varphi\left(\hbar\hbar^{k-j}q^{d_{n,k}-d_{n,j}}\right)}{\varphi\left(q\hbar^{k-j}q^{d_{n,k}-d_{n,j}}\right)} 
\end{align*}
From (\ref{verpow}), it is easy to see that all of these $\varphi$ function terms appear in $\Psi$. Thus we have shown that 
\begin{align*}
     \textbf{V}_{\lambda}(\zz) &= \sum_{d_{i,j}} \Psi'  \prod_{i=-r}^{n} \prod_{j=1}^{\mathsf{v}_i} z_{i}^{d_{i,j}} \prod_{i=n+1}^{s} \prod_{j=1}^{\mathsf{v}_i} z_i^{d_{n,j}}  {\bf V}_p(\aa, z_{n+1}, \ldots, z_s)\big|_{a_i=\hbar^{i-1}q^{d_{n,i}}}
\end{align*}
where $\Psi'$ is a product of $\varphi$ function terms, none of which depend on $d_{i,j}$ for $i>n$. This proves the lemma.

\end{proof}

\subsection{Conclusion of the proof}
Let $\square \in \lambda$ be the box with content $n$ and height $m$. Substituting $z_{i}=\frac{\zeta_{i-1}}{\zeta_{i}}$ and applying $p_{\square}$ to Lemma \ref{lem1} we have
\begin{align*}
p_{\square} \textbf{V}_{\lambda}(\zz) &=  \sum_{\substack{d_{i,j}}} \Psi \prod_{i=-r}^{n} \prod_{j=1}^{\mathsf{v}_i} \left(\frac{\zeta_{i-1}}{\zeta_{i}} \right)^{d_{i,j}} \prod_{i=n+1}^{s} \prod_{j=1}^{\mathsf{v}_i} \left(\frac{\zeta_{i-1}}{\zeta_{i}} \right)^{d_{n,j}}  q^{d_{n,m}} p_{\square} {\bf V}_p(\aa,\zeta) \big|_{a_i=\hbar^{i-1} q^{d_{n,i}}}   \\
% &= \sum_{\substack{d_{i,j}}} \Psi  \prod_{i=-r}^{n} \prod_{j=1}^{\mathsf{v}_i} \left(\frac{\zeta_{i-1}}{\zeta_{i}} \right)^{d_{i,j}} \prod_{i=n+1}^{s} \prod_{j=1}^{\mathsf{v}_i} \left(\frac{\zeta_{i-1}}{\zeta_{i}} \right)^{d_{n,j}}  \left( \frac{a_{m} }{\hbar^{m-1}}p_{\square} {\bf V}_p(\aa,\zeta_{n},\ldots,\zeta_{s}) \right) \big|_{a_i=\hbar^{i-1} q^{d_{n,i}}}  \\
&= \sum_{\substack{d_{i,j}}} \Psi  \prod_{i=-r}^{n} \prod_{j=1}^{\mathsf{v}_i} \left(\frac{\zeta_{i-1}}{\zeta_{i}} \right)^{d_{i,j}} \prod_{i=n+1}^{s} \prod_{j=1}^{\mathsf{v}_i} \left(\frac{\zeta_{i-1}}{\zeta_{i}} \right)^{d_{n,j}}  \left( \widetilde{p}_{\square} {\bf V}_p(\aa,\zeta) \right) \big|_{a_i=\hbar^{i-1} q^{d_{n,i}}}  
\end{align*}
And so 
\begin{align*}
&T_{\lambda}^{n,r} \textbf{V}_{\lambda}(\zz) =\sum_{\substack{d_{i,j}}} \Psi \prod_{i=-r}^{n} \prod_{j=1}^{\mathsf{v}_i} \left(\frac{\zeta_{i-1}}{\zeta_{i}} \right)^{d_{i,j}} \prod_{i=n+1}^{s} \prod_{j=1}^{\mathsf{v}_i} \left(\frac{\zeta_{i-1}}{\zeta_{i}} \right)^{d_{n,j}}  T_r {\bf V}_p(\aa,\zeta) \big|_{a_i=\hbar^{i-1} q^{d_{n,i}}}   \\
% &=\sum_{\substack{d_{i,j}}} \Psi \prod_{i=-r}^{n} \prod_{j=1}^{\mathsf{v}_i} \left(\frac{\zeta_{i-1}}{\zeta_{i}} \right)^{d_{i,j}} \prod_{i=n+1}^{s} \prod_{j=1}^{\mathsf{v}_i} \left(\frac{\zeta_{i-1}}{\zeta_{i}} \right)^{d_{n,j}} \left( e_r(\aa) {\bf V}_p(\aa,\zeta_{n},\ldots,\zeta_{s}) \right)\big|_{a_i=\hbar^{i-1} q^{d_{n,i}}}   \\
&=\sum_{\substack{d_{i,j}}} \Psi  \prod_{i=-r}^{n} \prod_{j=1}^{\mathsf{v}_i} \left(\frac{\zeta_{i-1}}{\zeta_{i}} \right)^{d_{i,j}} \\  &\qquad \qquad \prod_{i=n+1}^{s} \prod_{j=1}^{\mathsf{v}_i} \left(\frac{\zeta_{i-1}}{\zeta_{i}} \right)^{d_{n,j}}  e_r(q^{d_{n,1}}, \ldots, \hbar^{\mathsf{v}_n-1} q^{d_{n,\mathsf{v}_n}}) {\bf V}_p(\aa,\zeta) \big|_{a_i=\hbar^{i-1} q^{d_{n,i}}}   \\
&= \textbf{V}^{(\tau_{n,r})}_{\lambda}(\zz)
\end{align*}
which concludes the proof.

\section{Monodromy of vertex functions}

\subsection{}
Let $\bK$ be a torus with coordinates $\zz=(z_1,\cdots,z_n)$. Let $\sigma=(\sigma_1,\dots,\sigma_n) \in \textrm{cochar}_{\matR}(\bK)$ be a cocharacter. We denote $\zz q^{\sigma}=(z_1 q^{\sigma_1},\dots,z_n q^{\sigma_n})$ for some $q\in \matC^{\times}$ with $|q|<1$. 

Let us consider scalar $q$-difference equations (qde) of the form:
\be \label{qde}
\Psi(\zz q^{\sigma}) = M_{\sigma}(\zz) \Psi(\zz), 
\ee
where $\Psi(\zz)$ denotes a $\matC$-valued function on $\bK$ and $M_{\sigma}(\zz)\in \matQ(z_1,\dots, z_n)$ satisfies
\be \label{limsm}
\lim_{q\to 0} M_{\sigma}(q^{\delta}) \neq  0,\infty
\ee
for generic $\delta \in \textrm{cochar}_{\matR}(\bK)$. 
Clearly, the limits above will not change if we scale the cocharacter $\delta\to l \delta$ for some real $l>0$. This provides a decomposition  $$
\textrm{Lie}_{\matR}(\bK) \supset \coprod\, \fC$$ 
into a set of cones for which the limits (\ref{limsm}) remain the same. The cones $\fC$ are sometimes called {\it asymptotic zones} of the qde (\ref{qde}).

\begin{Definition}
We say that a function $F(\zz)$ is analytic in an asymptotic zone $\fC$ if it is given by a power series
\be \label{fholm}
F(z)=\sum\limits_{\langle \dd, \fC \rangle >0} c_{\dd} \zz^{\dd}  
\ee
with non-zero radius of convergence. Here, $\langle \cdot, \cdot \rangle$ denotes the natural pairing on characters and cocharacters.
\end{Definition}

It is convenient to view the asymptotic zones $\fC$ as ``infinities'' in certain toric compactification $ \overline{\bK}$ of $\bK$. The closure of each chamber $\fC$ is a strongly convex rational polyhedral cone, and the set of such cones generates a fan $\Delta$. The toric variety $ \overline{\bK}$ associated to this fan contains $\bK$ as a subvariety. See Figure \ref{toric} below for an example. The chambers $\fC$ then correspond to  certain points $0_{\fC} \in \overline{\bK}$.  A function $F(z)$ is analytic in an asymptotic zone $\fC$ when (\ref{fholm}) is the Taylor series of a function on  $\overline{\bK}$ holomorphic in a non-zero~neighborhood of~$0_{\fC}$.

\begin{figure}[h]
\centering
\includegraphics[width=10cm]{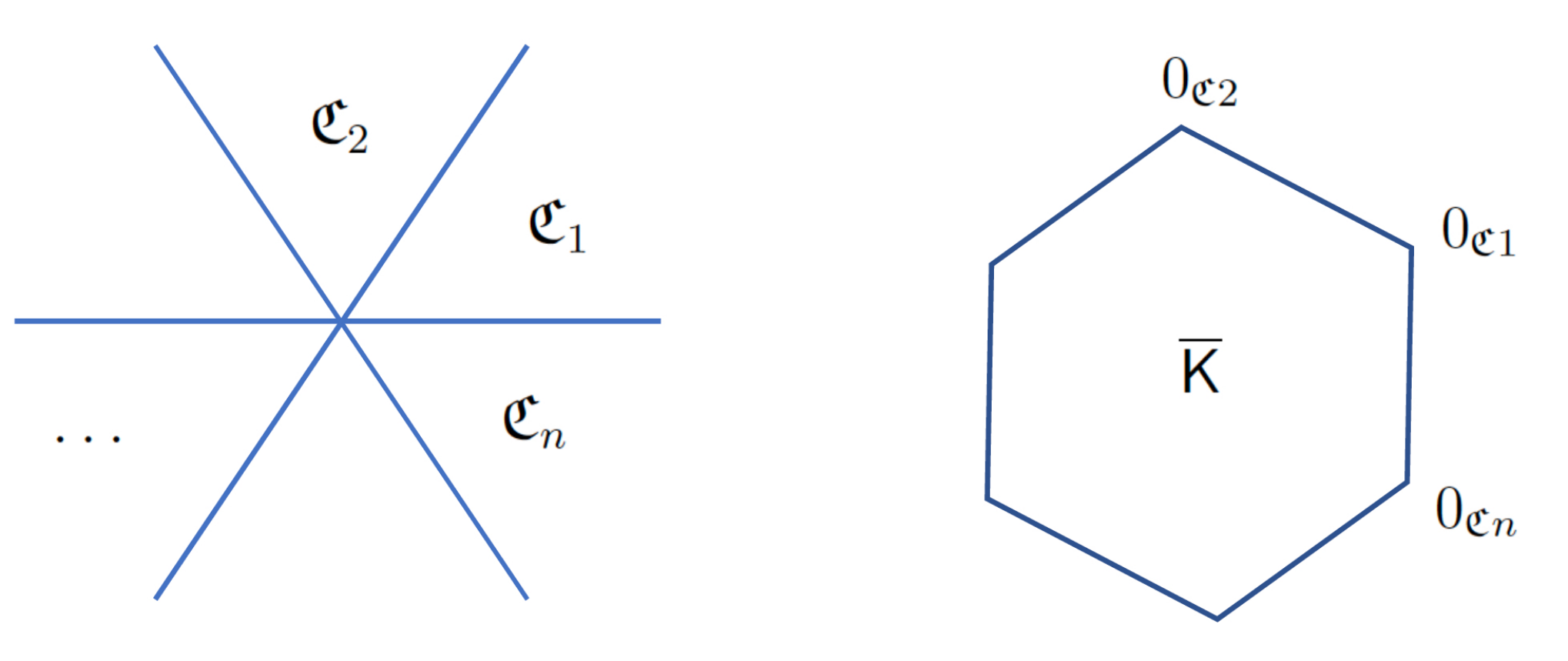}
\caption{\label{toric} An arrangement of chambers in $\Lie_{\matR}(\bK)$ and the corresponding toric compactification $\overline{\bK}$. }
\end{figure}

We will use the following notation
$$
M_{\sigma}(0_{\fC}):=\lim\limits_{q\to 0} M_{\sigma}(q^{\delta}), \ \ \delta \in \fC. 
$$

\begin{Proposition}
For a chamber $\fC$ there exists a unique solution of (\ref{qde}) of the form:
\be \label{phiform}
\Psi_{\fC}(\zz) =\zz^{\frac{\ln(a^{\sigma}) }{\ln(q)}} F_{\fC}(\zz)
\ee
where $F_{\fC}(\zz)$ is holomorphic near $0_{\fC}$ with $F_{\fC}(0_{\fC})=1$ and 
$
a^{\sigma}=M_{\sigma}(0_{\fC}). 
$
\end{Proposition}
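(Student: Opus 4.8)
The plan is to construct $\Psi_{\fC}$ by a standard ``divide out the leading term then solve for the holomorphic part'' argument, adapted to the $q$-difference setting. First I would dispose of the prefactor: write $a^{\sigma} := M_{\sigma}(0_{\fC})$, which is a nonzero finite constant by~\eqref{limsm}, and observe that the monomial $\zz^{\ln(a^{\sigma})/\ln(q)}$ (interpreted so that $z_i \mapsto z_i q^{\sigma_i}$ multiplies it by $a^{\sigma}$, i.e. $\langle\sigma, \ln(a^{\sigma})/\ln q\rangle$ in the exponent) is a solution of the ``constant'' $q$-difference equation $\Phi(\zz q^{\sigma}) = a^{\sigma}\,\Phi(\zz)$. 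Substituting $\Psi(\zz) = \zz^{\ln(a^{\sigma})/\ln q}\,F(\zz)$ into~\eqref{qde} reduces the problem to finding $F$ holomorphic near $0_{\fC}$ with $F(0_{\fC}) = 1$ solving
\begin{equation*}
F(\zz q^{\sigma}) = \widetilde{M}_{\sigma}(\zz)\, F(\zz), \qquad \widetilde{M}_{\sigma}(\zz) := \frac{M_{\sigma}(\zz)}{a^{\sigma}},
\end{equation*}
where now $\widetilde{M}_{\sigma}(0_{\fC}) = 1$.

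Next I would set up the recursion for the Taylor coefficients. Expanding in the cone $\fC$, write $F(\zz) = \sum_{\dd}\, c_{\dd}\, \zz^{\dd}$ with the sum over $\dd$ in the monoid $\{\dd : \langle \dd, \fC\rangle \geq 0\}$, normalized by $c_{0} = 1$; similarly expand $\widetilde{M}_{\sigma}(\zz) = \sum_{\ee} m_{\ee}\, \zz^{\ee}$, convergent near $0_{\fC}$ with $m_{0} = 1$ (this is exactly where condition~\eqref{limsm}, together with $\widetilde M_\sigma$ being a ratio of polynomials nonvanishing at $0_{\fC}$, is used — it guarantees $\widetilde M_\sigma$ is holomorphic and nonzero at $0_{\fC}$, hence has such an expansion). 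Comparing coefficients of $\zz^{\dd}$ in $F(\zz q^{\sigma}) = \widetilde{M}_{\sigma}(\zz) F(\zz)$ gives
\begin{equation*}
\bigl(q^{\langle\sigma,\dd\rangle} - 1\bigr)\, c_{\dd} = \sum_{\substack{\ee + \dd' = \dd \\ \dd' \neq \dd}} m_{\ee}\, c_{\dd'},
\end{equation*}
so each $c_{\dd}$ with $\dd \neq 0$ is determined uniquely from coefficients $c_{\dd'}$ with $\dd'$ strictly smaller in the monoid order, provided $q^{\langle\sigma,\dd\rangle} \neq 1$. Since $|q| < 1$ and $\langle\sigma,\dd\rangle > 0$ for $\dd \neq 0$ in the cone (after possibly perturbing $\sigma$ within $\fC$, or noting $\langle\sigma,\dd\rangle$ is a positive real), the factor $q^{\langle\sigma,\dd\rangle} - 1$ never vanishes; this yields existence and uniqueness of the formal solution $F_{\fC}$. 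Uniqueness of $\Psi_{\fC}$ of the prescribed form follows because any two such solutions have ratio a $q$-periodic function holomorphic and equal to $1$ at $0_{\fC}$, hence identically $1$.

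The main obstacle is convergence: one must show the formal series $F_{\fC}(\zz)$ has a nonzero radius of convergence. I would argue this by a majorant/dominated-series estimate. Because $\langle\sigma,\dd\rangle$ grows at least linearly in $|\dd|$ on the cone, the denominators $|q^{\langle\sigma,\dd\rangle} - 1|$ are bounded below by a positive constant (they tend to $1$ as $|\dd| \to \infty$), so the recursion does not introduce small divisors — this is the crucial simplification compared to the differential or generic-$q$ situations. Then a standard induction comparing $|c_{\dd}|$ against the coefficients of a geometric-type majorant series built from a common bound $C\rho^{|\ee|}$ on the $|m_{\ee}|$ (available since $\widetilde M_\sigma$ is holomorphic at $0_{\fC}$) shows $|c_{\dd}| \leq A\, B^{|\dd|}$ for suitable constants, giving the desired positive radius of convergence and completing the proof. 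If one prefers to avoid explicit estimates, one can instead invoke the general existence theorem for holomorphic solutions of regular-singular $q$-difference systems near a torus fixed point, but I expect the direct majorant argument to be cleaner here.
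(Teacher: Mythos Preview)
Your proposal is correct and follows the same approach as the paper's proof --- substitute the ansatz~\eqref{phiform} into~\eqref{qde} and solve recursively for the Taylor coefficients --- but the paper compresses this into a single sentence (``Substituting~\eqref{phiform} to~\eqref{qde} we see that it is consistent near $0_{\fC}$ and coefficients $c_{\dd}$ in~\eqref{fholm} are fixed uniquely by $c_{0}=1$''), omitting entirely the convergence discussion you supply. Your treatment is thus a fleshed-out version of the paper's sketch, and the majorant argument for convergence is a genuine addition beyond what the paper provides.
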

 
\begin{proof}
Substituting (\ref{phiform}) to (\ref{qde}) we see that it is consistent  near $0_{\fC}$ and coefficients $c_{\dd}$ in (\ref{fholm}) are fixed uniquely by $c_{0}=1$.
\end{proof}
We call $F_{\fC}(\zz)$ the {\it analytic part} of the solution $\Psi_{\fC}(\zz)$. 

\begin{Definition}
The $q$-periodic function
$$
R_{\fC_1 \leftarrow \fC_2}(\zz)=\Psi_{\fC_1}(\zz)  \Psi^{-1}_{\fC_2}(\zz) 
$$
is called the monodromy of the qde from asymptotic zone $\fC_2$ to zone $\fC_1$.
\end{Definition}

\subsection{}
Now, let $\bK=(\matC^{\times})^{|I|}$ be a K\"ahler torus of $X_{\lambda}$. From (\ref{prform}) we have:
\begin{Proposition}
The vertex function of $X_{\lambda}$  satisfies the $q$-difference equation:
\be \label{verqde}
\textbf{V}_{\lambda}(z_{\square} q) =  \dfrac{1-z_{\square}}{1-z_{\square} \hbar} \textbf{V}_{\lambda}(z_{\square})
\ee
\end{Proposition}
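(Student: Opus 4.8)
The plan is to read off the $q$-difference equation directly from the product formula (\ref{prform}). First I would recall that
$$
\textbf{V}_{\lambda}(\zz) = \prod_{\square \in \lambda} \prod_{i=0}^{\infty} \frac{1 - \hbar z_{\square} q^i}{1 - z_{\square} q^i},
$$
and note that the variables $z_{\square}$, for $\square$ running over $\lambda$, are multiplicatively independent expressions in the K\"ahler parameters (indeed by (\ref{zzeta}) they span the same lattice as the $z_i$, since $\lambda$ has $|\lambda| = \sum_i \mathsf{v}_i$ boxes and the hook sets $H_{\lambda}(\square)$ are triangular with respect to content), so it makes sense to regard $\textbf{V}_{\lambda}$ as a function of the $z_{\square}$ and to shift one of them, say $z_{\square_0}$, by $q$ while fixing the others.

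The key step is the elementary telescoping identity for the infinite $q$-Pochhammer factor: writing
$$
f(w) := \prod_{i=0}^{\infty} \frac{1 - \hbar w q^i}{1 - w q^i},
$$
one has
$$
f(qw) = \prod_{i=0}^{\infty} \frac{1 - \hbar w q^{i+1}}{1 - w q^{i+1}} = \frac{1 - w}{1 - \hbar w}\, \prod_{i=0}^{\infty} \frac{1 - \hbar w q^{i}}{1 - w q^{i}} = \frac{1-w}{1 - \hbar w}\, f(w),
$$
since shifting the index $i \mapsto i+1$ simply removes the $i=0$ terms from numerator and denominator. Applying this with $w = z_{\square}$, and observing that all the other factors $f(z_{\square'})$ with $\square' \neq \square$ are untouched by the substitution $z_{\square} \mapsto q z_{\square}$, gives
$$
\textbf{V}_{\lambda}(\zz)\big|_{z_{\square} = q z_{\square}} = \frac{1 - z_{\square}}{1 - \hbar z_{\square}}\, \textbf{V}_{\lambda}(\zz),
$$
which is exactly (\ref{verqde}). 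This is precisely the scaling relation already used in the proof of Corollary \ref{cor}.

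There is essentially no obstacle here; the only point requiring a word of care is the legitimacy of treating the $z_{\square}$ as independent coordinates in which one may perform a single-variable $q$-shift — that is, one should check that the map from K\"ahler parameters to the tuple $(z_{\square})_{\square \in \lambda}$ identifies the relevant lattices, so that ``shift $z_{\square}$ by $q$'' is a well-defined cocharacter action on $\bK$. This follows from (\ref{zzeta}): ordering boxes by content and then by height, the exponent matrix expressing $z_{\square}$ in terms of the $z_i$ (up to the fixed monomials in $\hbar/q$) is unitriangular, hence invertible over $\matZ$. Once this is noted the proposition is immediate from the telescoping computation above.
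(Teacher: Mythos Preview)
Your telescoping computation is correct and is exactly how the paper obtains the proposition (the paper simply says ``From (\ref{prform}) we have'' and leaves the one-line check to the reader). So the essential argument is fine and matches the paper.

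However, your side remark about the $z_{\square}$ being multiplicatively independent coordinates on $\bK$ is wrong. The number of boxes is $|\lambda|=\sum_i \mathsf{v}_i$, but the number of K\"ahler parameters is the number of vertices with $\mathsf{v}_i\neq 0$, i.e.\ the number of distinct contents $\lambda_1+\lambda_1'-1$. These agree only when $\lambda$ is a hook; for the paper's running example $\lambda=(5,4,3,2)$ there are $14$ boxes but only $8$ K\"ahler parameters. So the exponent matrix you describe is not square, let alone unitriangular, and in general one cannot shift a single $z_{\square}$ by $q$ while fixing all the others by any cocharacter of $\bK$. The proposition should simply be read as a formal identity for the product (\ref{prform}) viewed as a function of the variables $z_{\square}$ --- which is all that is needed here and all that the paper uses --- rather than as a statement about a cocharacter shift. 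Your last paragraph therefore proves a false statement; drop it, and the proof stands.
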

This is a special case of the qde's associated with quiver varieties discussed in \cite{OS}.

Each $z_{\square}$ corresponds to a $\bK$-character $\sigma_{\square}$. The complement of the hyperplanes 
$\sigma_{\square}^{\perp}$
\be \label{chams}
\Lie_{\matR}(\bK) \setminus \{\sigma_{\square}^{\perp}:\, \square\in \lambda\} = \coprod \fC
\ee
is the union of chambers defining asymptotic zones of (\ref{verqde}). The corresponding compactification
$\overline{\bK}$ is sometimes called {\it K\"ahler moduli space} of the quiver variety $X_{\lambda}$.
The vertex function (\ref{prform}) is the unique solution of the qde (\ref{verqde}) for the chamber defining the positive stability condition:
$$
\fC_{+} = \{\theta \in \Lie_{\matR}(\bK): \langle \theta, \sigma_{\square} \rangle >0, \square\in \lambda \}. 
$$
The solutions for other chambers are easy to describe.
\begin{Proposition} \label{propv}
The solution of (\ref{verqde}) corresponding to a chamber $\fC$ equals:
\be \label{vercham}
{\Psi}_{\lambda,\fC}(\zz)=\zz^{\frac{\ln( \hbar_{\fC})}{\ln(q)}} \prod_{\substack{{\square\in \lambda,} \\ {\langle \sigma_\square,\fC \rangle >0}}}  \prod\limits_{i=0}^{\infty} \dfrac{1- \hbar z_{\square} q^{i}}{1- z_{\square} q^{i}} \prod_{\substack{{\square\in \lambda,}\\ {\langle \sigma_\square ,\fC \rangle <0}}}  \prod\limits_{i=1}^{\infty} \dfrac{1-  z_{\square}^{-1} q^{i}}{1- z_{\square}^{-1} \hbar^{-1} q^{i}}  
\ee
where $\zz^{\frac{\ln( \hbar_{\fC})}{\ln(q)}}$ denotes the weight determined by the transformations
$$
z_{\square} \to z_{\square} q  \ \ \implies \ \  \zz^{\frac{\ln( \hbar_{\fC})}{\ln(q)}} \to \zz^{\frac{\ln( \hbar_{\fC})}{\ln(q)}} \hbar^{p_{\fC}}     
$$
where $p_{\fC} = |\{ \square\in \lambda: \langle \sigma_{\square},\fC \rangle <0 \}|  \in \matZ$. 
\end{Proposition}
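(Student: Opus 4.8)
The plan is to verify directly that the right-hand side of (\ref{vercham}) solves the $q$-difference equation (\ref{verqde}) for each box $\square\in\lambda$, has the correct analytic part near the limit point $0_{\fC}$, and then to invoke the uniqueness statement of the preceding Proposition. Write the proposed solution as $\Psi_{\lambda,\fC}(\zz) = \zz^{\ln(\hbar_\fC)/\ln(q)}\, F_\fC(\zz)$, where $F_\fC(\zz)$ is the product of the two infinite products in (\ref{vercham}). First I would check convergence and analyticity: for $\square$ with $\langle\sigma_\square,\fC\rangle>0$ the factor is the convergent product from (\ref{prform}) in the variable $z_\square$, which vanishes to order $0$ at $z_\square=0$; for $\square$ with $\langle\sigma_\square,\fC\rangle<0$ the factor, being a product in $z_\square^{-1}$ starting at $i=1$, tends to $1$ as $z_\square^{-1}\to 0$, i.e. as one approaches $0_\fC$ along $\fC$. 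Hence $F_\fC$ is holomorphic in a neighborhood of $0_\fC$ with $F_\fC(0_\fC)=1$, matching the normalization in (\ref{phiform}).

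Next I would compute the effect of the shift $z_\square\mapsto z_\square q$ on $\Psi_{\lambda,\fC}$, one box at a time. For a box $\square$ with $\langle\sigma_\square,\fC\rangle>0$, the standard telescoping identity
\[
\prod_{i=0}^{\infty}\frac{1-\hbar z_\square q^{i+1}}{1-z_\square q^{i+1}} = \frac{1-z_\square}{1-\hbar z_\square}\prod_{i=0}^{\infty}\frac{1-\hbar z_\square q^{i}}{1-z_\square q^{i}}
\]
shows that the corresponding factor of $F_\fC$ is multiplied by $(1-z_\square)/(1-\hbar z_\square)$, reproducing the desired eigenvalue in (\ref{verqde}); the other boxes' factors, depending on distinct characters, are unaffected. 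For a box with $\langle\sigma_\square,\fC\rangle<0$, under $z_\square\mapsto z_\square q$ we have $z_\square^{-1}\mapsto z_\square^{-1}q^{-1}$, and a similar telescoping of $\prod_{i\ge 1}(1-z_\square^{-1}q^{i})/(1-\hbar^{-1}z_\square^{-1}q^{i})$ produces the factor $(1-\hbar^{-1}z_\square^{-1})/(1-z_\square^{-1}) = \hbar\,(1-z_\square)/(1-\hbar z_\square)\cdot(-z_\square^{-1})/(-\hbar^{-1}z_\square^{-1})$; after simplification this equals $\hbar^{-1}(1-z_\square)/(1-\hbar z_\square)$ times $\hbar$, i.e. it again yields the eigenvalue $(1-z_\square)/(1-\hbar z_\square)$ up to an overall factor of $\hbar$. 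This stray factor of $\hbar$, which appears exactly once for each box with $\langle\sigma_\square,\fC\rangle<0$, is precisely what the prefactor $\zz^{\ln(\hbar_\fC)/\ln(q)}$ is designed to absorb: by its defining transformation rule it contributes $\hbar^{p_\fC}$ under the simultaneous shift, but under a single-box shift $z_\square\mapsto z_\square q$ it contributes one factor of $\hbar$ when $\langle\sigma_\square,\fC\rangle<0$ and none otherwise. Combining, $\Psi_{\lambda,\fC}(z_\square q) = \frac{1-z_\square}{1-\hbar z_\square}\Psi_{\lambda,\fC}(z_\square)$ for every box.

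Finally I would appeal to the uniqueness in the preceding Proposition: since $\Psi_{\lambda,\fC}$ has the form (\ref{phiform}) with the stated normalization $F_\fC(0_\fC)=1$, and since its leading monomial weight $a^\sigma = M_\sigma(0_\fC)$ is determined by the $q\to 0$ limit of the product of eigenvalues over the half of the boxes with $\langle\sigma_\square,\fC\rangle<0$ (the other half contributing limits equal to $1$), it must coincide with the unique solution attached to $\fC$. I expect the only real bookkeeping obstacle to be the careful matching of the $\hbar$-weight: one must check that the transformation rule defining $\zz^{\ln(\hbar_\fC)/\ln(q)}$ is consistent (i.e. that the assignment $z_\square\mapsto z_\square q \implies \cdot\,\hbar^{\text{(sign count)}}$ actually extends to a well-defined character of $\bK$, which follows because $p_\fC$ counts boxes linearly in the characters $\sigma_\square$) and that its $q\to 0$ limit along $\fC$ reproduces exactly $M_\sigma(0_\fC)$; everything else is the two telescoping computations above.
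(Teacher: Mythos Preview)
Your approach is exactly the paper's: verify the $q$-difference relation factor by factor and invoke the uniqueness Proposition for the normalization at $0_\fC$. The paper's own proof is literally two sentences (``elementary to check'' plus holomorphy), so your write-up is a faithful expansion of what the authors intend.

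There is, however, a sign bookkeeping slip in your negative-box computation. Telescoping correctly gives
\[
\frac{G(z_\square q)}{G(z_\square)}=\frac{1-z_\square^{-1}}{1-\hbar^{-1}z_\square^{-1}}=\hbar\cdot\frac{1-z_\square}{1-\hbar z_\square},
\]
not its reciprocal; so $F_\fC$ already produces the eigenvalue times an \emph{extra} factor $\hbar$. For $\Psi_{\lambda,\fC}$ to satisfy (\ref{verqde}) the prefactor must therefore contribute $\hbar^{-1}$, not $\hbar$, under a single negative-box shift. Your sentence ``the prefactor contributes one factor of $\hbar$ \ldots\ and this absorbs the stray $\hbar$'' is internally inconsistent ($\hbar\cdot\hbar\neq 1$). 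The fix is simply to read the defining transformation rule for $\zz^{\ln(\hbar_\fC)/\ln(q)}$ with the opposite sign convention (the paper's notation here is loose; the weight is only specified up to this ambiguity, and the correct choice is forced by the computation you just did). With that correction your argument goes through verbatim.
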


\begin{proof}
It is elementary to check that this function solves 
(\ref{verqde}). It is also clear that the analytic part of $\Psi_{\lambda,\fC}(\zz)$ is holomorphic near $0_{\fC} \in \overline{\bK}$ 
\end{proof}

\subsection{} 
Let us discuss the geometric meaning of the solutions described in Proposition~\ref{propv}. Recall that to define a Nakajima quiver variety one needs to specify a {\it stability} condition for the geometric invariant theory quotient. The stability condition is specified by a choice of $\theta \in \Lie_{\matR}(\bK)$. The corresponding quiver variety changes (by a symplectic flop) when $\theta$ crosses certain hyperplanes in $\Lie_{\matR}(\bK)$. The complement of these hyperplanes divides the space into a set of chambers $\fC$. The quiver variety obtained from a choice of cocharacter depends only on the chamber that contains it.

The vertex function for a quiver variety $X_{\fC}$ formed by a stability condition from a chamber $\fC$ is given by a power series over the degrees of effective curves, which are given by $\fC$:
$$
\textbf{V}_{X_\fC}(\zz)=1+\sum\limits_{\langle \dd,\fC \rangle >0 } c_{\dd} \zz^{\dd} 
$$
The relation between the vertex functions for different stability conditions is described by the following idea:
\begin{Conjecture} \label{stabcong}
The quantum difference equations for quiver varieties are invariant under a change of the stability condition.
\end{Conjecture}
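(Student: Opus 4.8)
The plan is to establish Conjecture~\ref{stabcong} in the setting of the present paper and to indicate the structural reason it should hold in general. For the varieties $X_\lambda$, Proposition~\ref{propv} has in fact already done most of the work: it exhibits, for \emph{every} chamber $\fC$ of the arrangement (\ref{chams}), a solution $\Psi_{\lambda,\fC}(\zz)$ of the \emph{same} $q$-difference equation (\ref{verqde}). In other words, the cocycle $M_\sigma(\zz)$ attached to (\ref{verqde}) is a fixed rational function --- a product of factors $(1-z_\square)/(1-z_\square\hbar)$ --- which manifestly satisfies (\ref{limsm}) in every $\fC$, so it does not change when the stability condition is varied. What is left to prove is that the analytic solution $\Psi_{\lambda,\fC}$ is the honest vertex function $\textbf{V}_{X_\fC}(\zz)$ of the quiver variety built from a stability parameter in $\fC$.

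To do this I would redo the $K$-theoretic localization computation that produced (\ref{prform}), but now for $X_\fC$, using the stability conditions of Proposition 5.1.5 in \cite{GinzburgLectures}. Concretely: enumerate the $\mathbb{C}^\times_q$-fixed loci on $\qm^\dd$ for $X_\fC$, write their virtual contributions as $q$-Pochhammer products in the equivariant parameters, and check that the resulting series $\textbf{V}_{X_\fC}(\zz)$ is supported on degrees $\dd$ with $\langle\dd,\fC\rangle>0$, hence holomorphic near $0_\fC$. Since $\textbf{V}_{X_\fC}$ again solves (\ref{verqde}) and its analytic part is normalized to $1$ at $0_\fC$, the uniqueness of the solution of the form (\ref{phiform}) forces $\textbf{V}_{X_\fC}=\Psi_{\lambda,\fC}$. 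In practice it is cleaner to bypass the combinatorics and argue that the limit of the capped vertex of $X_\lambda$ as $\zz\to 0_\fC$ in $\overline{\bK}$ computes a count of quasimaps to $X_\fC$; this is the route taken for the applications in Section~6.

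For general Nakajima quiver varieties the argument is of a different nature. The qde is the $q$-difference connection of \cite{OS}, whose shift operators are assembled from quantum multiplication by tautological line bundles together with the dynamical shift; these operators live inside the action of the associated quantum affine algebra on $\bigoplus_{\mathsf v}K_\bT(X_{\mathsf v,\mathsf w})$, which depends only on the quiver and the dimension data and not on $\theta$. When $\theta_1$ and $\theta_2$ lie in adjacent chambers, $X_{\theta_1}$ and $X_{\theta_2}$ differ by a symplectic flop, and variation-of-GIT results (Halpern-Leistner; Bezrukavnikov--Kaledin) provide a $\bT$-equivariant isomorphism $K_\bT(X_{\theta_1})\cong K_\bT(X_{\theta_2})$. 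One then has to show that this isomorphism intertwines the two connections, which one does by tracking its action on the $K$-theoretic (or elliptic) stable-envelope bases: the transition matrix is triangular with respect to the shared fixed-point set, and on either side the shift operators are pinned down by the same commutation relations with the stable envelopes.

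The main obstacle is exactly this last step. Showing that the flop equivalence conjugates the shift operators of \cite{OS} correctly --- equivalently, that the wall-crossing matrix is $q$-periodic and commutes with the connection --- is the real content of Conjecture~\ref{stabcong}. It is bound up with the fact, established below for $X_\lambda$, that the monodromy $R_{\fC_1\leftarrow\fC_2}(\zz)$ of the qde agrees up to a constant with the elliptic R-matrix of the symplectic dual variety \cite{AOElliptic}: the $K$-theoretic wall-crossing operator should be its zero-slope limit. In the zero-dimensional case this obstacle evaporates, since $K_\bT(X_\fC)_{loc}=\matQ(\hbar,q)$ for every $\fC$ and the flop equivalence is the identity, leaving only the task --- not carried out in detail here --- of matching $\Psi_{\lambda,\fC}$ with the quasimap count $\textbf{V}_{X_\fC}$, which is why the statement is recorded as a conjecture.
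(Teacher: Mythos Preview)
The paper does not prove this statement. It is recorded as a \emph{Conjecture}, with the remark that the cohomological analogue is known from \cite{MO}, and the authors explicitly write ``In the rest of this paper we assume that Conjecture \ref{stabcong} holds.'' There is therefore no proof in the paper to compare your proposal against.

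Your write-up is not a proof either, and you seem to know this: the first paragraph correctly observes that (\ref{verqde}) is a single fixed equation whose solutions in each chamber are given by Proposition~\ref{propv}, but then concedes that the identification of $\Psi_{\lambda,\fC}$ with the genuine quasimap vertex of $X_{\lambda,\fC}$ is ``not carried out in detail here.'' That identification is precisely the content of the conjecture in this case; without it you have only restated what the paper already says (namely Corollary~2 right after the conjecture). The second and third paragraphs are a reasonable heuristic for the general case --- flop equivalences on $K$-theory intertwining the connection of \cite{OS} --- but, as you yourself note, showing that the wall-crossing operator commutes with the shift operators is exactly the open problem. So the gap is real and named correctly by you: what you have is an outline of what a proof would require, not a proof, and the paper is in the same position.
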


In the case of equivariant cohomology, this conjecture was proven in \cite{MO}. 

This conjecture implies that the vertex functions for  $X_{\fC}$  are solutions of {\it the same qde (independent of $\fC$), analytic near different points of the K\"ahler moduli space}.

Let $X_{\lambda,\fC}$ be the quiver variety obtained from the dimension data given by $\lambda$ with the stability condition $\fC$. 

\begin{Corollary} 
If Conjecture \ref{stabcong} holds then the analytic part of (\ref{vercham}) is the vertex function of the quiver variety $X_{\lambda,\fC}$. 
\end{Corollary}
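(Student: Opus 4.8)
The plan is to identify $\textbf{V}_{X_{\lambda,\fC}}(\zz)$ with the normalized holomorphic solution of the $q$-difference equation (\ref{verqde}) near the limit point $0_\fC$, and then to read off that solution from Proposition \ref{propv}.

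First I would assemble the structural input. By construction the quasimap vertex function $\textbf{V}_{X_{\lambda,\fC}}(\zz)$ is a generating series over degrees of effective curves in $X_{\lambda,\fC}$; for the chamber $\fC$ these effective classes are the $\sigma_\square$ with $\langle\sigma_\square,\fC\rangle>0$ together with the flopped classes $-\sigma_\square$ for $\langle\sigma_\square,\fC\rangle<0$, so in the local coordinates at $0_\fC$ it is a power series with constant term $1$, i.e.\ it is analytic in the asymptotic zone $\fC$ in the sense of the Definition preceding Proposition \ref{propv}. Moreover, as recalled in the Introduction and in the discussion above, the vertex function attached to a stability condition is the analytic part of a fundamental solution of the quantum difference equation of the corresponding quiver variety, holomorphic near the associated limit point; thus $\textbf{V}_{X_{\lambda,\fC}}$ is, up to the monomial prefactor appearing in (\ref{phiform}), exactly such a fundamental solution for $X_{\lambda,\fC}$ near $0_\fC$.

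Now I would invoke the hypothesis. Assuming Conjecture \ref{stabcong}, the quantum difference equation of $X_{\lambda,\fC}$ is independent of $\fC$ and hence coincides with the equation (\ref{verqde}) of $X_\lambda$. Therefore the solution just described is a solution of (\ref{verqde}) of the form (\ref{phiform}), holomorphic near $0_\fC$ and normalized so that its analytic part takes the value $1$ at $0_\fC$. By the existence-and-uniqueness Proposition such a solution is unique (already at the level of formal power series, by the recursion on the coefficients $c_\dd$), and Proposition \ref{propv} exhibits it explicitly as $\Psi_{\lambda,\fC}(\zz)$ in (\ref{vercham}). Comparing analytic parts then gives $\textbf{V}_{X_{\lambda,\fC}}(\zz)=$ (analytic part of (\ref{vercham})), which is the claim.

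The substance of this argument lies in its two inputs, not in the final deduction, which is immediate. The genuine obstacle is Conjecture \ref{stabcong} itself — established only in equivariant cohomology in \cite{MO} and open in $K$-theory — which is why the statement is conditional; the other input, that the vertex function is the normalized holomorphic part of a fundamental solution near the limit point attached to its stability condition, is the general dictionary between vertex functions and quantum difference equations, which we take for granted. A secondary technical point deserving care is the matching of Kähler parameters: for the chamber $\fC$ the curve classes $\sigma_\square$ with $\langle\sigma_\square,\fC\rangle<0$ are flopped, so $z_\square$ should be replaced by $z_\square^{-1}$, and one should verify that this substitution reproduces precisely both the switch from $\prod_{i=0}^{\infty}$ to $\prod_{i=1}^{\infty}$ in the second product of (\ref{vercham}) and the monomial prefactor $\zz^{\ln(\hbar_\fC)/\ln q}$, whose exponent is controlled by $p_\fC$.
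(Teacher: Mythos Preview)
Your argument is correct and follows precisely the logic the paper sets up: the paper does not give a separate proof of this corollary, treating it as immediate from the sentence preceding it (that the conjecture implies the vertex functions for different $\fC$ solve the same qde, analytic near different points $0_\fC$), together with the uniqueness Proposition and Proposition~\ref{propv}. You have simply unpacked that implicit reasoning, and your additional remarks on the flopped coordinates and the prefactor are accurate elaborations rather than a different method.
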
 

In the rest of this paper we assume that Conjecture \ref{stabcong} holds.

\subsection{} 

\begin{Proposition}
The monodromy of (\ref{verqde}) from $\fC_2$ to $\fC_1$ equals:
$$
R_{\fC_1 \leftarrow \fC_2}(\zz)= (-\hbar^{1/2})^{p_{\fC_1}-p_{\fC_{2}}} \zz^{\frac{\ln(\hbar_{\fC_1})}{\ln(q)}-\frac{\ln(\hbar_{\fC_2})}{\ln(q)}} \dfrac{\prod\limits_{\substack{{\square \in \lambda}\\ {\langle \sigma_\square, \fC_1\rangle <0}}} \vartheta(z_{\square})\prod\limits_{\substack{{\square \in \lambda}\\ {\langle \sigma_\square, \fC_1\rangle >0}}}  \vartheta(z_{\square} \hbar)}{\prod\limits_{\substack{{\square \in \lambda}\\ {\langle \sigma_\square, \fC_2\rangle <0}}} \vartheta(z_{\square})\prod\limits_{\substack{{\square \in \lambda}\\ {\langle \sigma_\square, \fC_2\rangle >0}}}  \vartheta(z_{\square} \hbar)}
$$
where 
$$
\vartheta(z)=(z^{1/2}-z^{-1/2}) \prod\limits_{i=1}^{\infty} (1-z q^{i}) (1-z^{-1} q^{i})
$$
denotes the odd Jacobi theta function. 
\end{Proposition}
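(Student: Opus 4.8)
The plan is to compute the ratio $R_{\fC_1 \leftarrow \fC_2}(\zz) = \Psi_{\lambda,\fC_1}(\zz)\, \Psi_{\lambda,\fC_2}^{-1}(\zz)$ directly from the explicit formula \eqref{vercham}, reorganizing the infinite products into theta functions. First I would recall that for a single box $\square$, the contribution to $\Psi_{\lambda,\fC}$ is either $\prod_{i\ge 0}\frac{1-\hbar z_\square q^i}{1-z_\square q^i}$ (when $\langle\sigma_\square,\fC\rangle>0$) or $\prod_{i\ge 1}\frac{1-z_\square^{-1}q^i}{1-z_\square^{-1}\hbar^{-1}q^i}$ (when $\langle\sigma_\square,\fC\rangle<0$). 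The key computation is the ratio of these two expressions for a fixed $\square$. I would write
\[
\frac{\prod_{i\ge 0}(1-\hbar z_\square q^i)}{\prod_{i\ge 1}(1-z_\square^{-1}q^i)} \Big/ \frac{\prod_{i\ge 0}(1-z_\square q^i)}{\prod_{i\ge 1}(1-z_\square^{-1}\hbar^{-1}q^i)}
\]
and recognize, using $\vartheta(z)=(z^{1/2}-z^{-1/2})\prod_{i\ge1}(1-zq^i)(1-z^{-1}q^i)$, that $\prod_{i\ge 0}(1-wq^i)\cdot\prod_{i\ge1}(1-w^{-1}q^i) = (1-w)\prod_{i\ge1}(1-wq^i)(1-w^{-1}q^i) = -w^{-1/2}(w^{1/2}-w^{-1/2})\prod_{i\ge1}(1-wq^i)(1-w^{-1}q^i) = -w^{-1/2}\vartheta(w)$. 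Applying this with $w=\hbar z_\square$ in the numerator and $w = z_\square$ in the denominator, the box's contribution to the monodromy becomes $\dfrac{-( \hbar z_\square)^{-1/2}\vartheta(\hbar z_\square)}{-z_\square^{-1/2}\vartheta(z_\square)} = \hbar^{-1/2}\dfrac{\vartheta(\hbar z_\square)}{\vartheta(z_\square)}$, valid when $\square$ is a ``$>0$ box'' for $\fC_1$; when it is a ``$<0$ box'' for $\fC_1$ the roles are swapped and we pick up the reciprocal.

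Next I would carefully bookkeep which boxes change sign between $\fC_2$ and $\fC_1$. Using $\Psi_{\fC_1}\Psi_{\fC_2}^{-1}$, a box $\square$ with $\langle\sigma_\square,\fC_1\rangle>0$ contributes $\prod_{i\ge0}\frac{1-\hbar z_\square q^i}{1-z_\square q^i}$ from $\Psi_{\fC_1}$, while in $\Psi_{\fC_2}^{-1}$ it contributes either the reciprocal of the same (if also $>0$ for $\fC_2$, cancelling) or the reciprocal of the $<0$-form (if $<0$ for $\fC_2$). The boxes that agree in sign cancel completely; only boxes that flip sign survive. Collecting the surviving factors, the total is $\prod_{\square:\langle\sigma_\square,\fC_1\rangle>0}\bigl(-(\hbar z_\square)^{-1/2}\vartheta(\hbar z_\square)\bigr) \cdot \prod_{\square:\langle\sigma_\square,\fC_1\rangle<0}\bigl(-z_\square^{-1/2}\vartheta(z_\square)\bigr)$ divided by the same expression with $\fC_1$ replaced by $\fC_2$, together with the explicit prefactors $\zz^{\ln(\hbar_{\fC_1})/\ln(q)}$ and $\zz^{-\ln(\hbar_{\fC_2})/\ln(q)}$ from \eqref{vercham}. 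The $\vartheta$-parts reproduce exactly the ratio in the statement; the leftover scalar factors $(-(\hbar z_\square)^{-1/2})$ and $(-z_\square^{-1/2})$ need to be matched against $(-\hbar^{1/2})^{p_{\fC_1}-p_{\fC_2}}$ and absorbed into the $\zz^{\cdots}$-prefactor.

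The main obstacle, and the step I would spend the most care on, is this last reconciliation of the non-theta scalar prefactors. The powers of $z_\square^{1/2}$ left over from rewriting $\prod(1-wq^i)\prod(1-w^{-1}q^i)$ in terms of $\vartheta$ are monomials in the $z_i$, and I must check that their product over the flipped boxes, combined with the discrepancy between $\ln(\hbar_{\fC_1})$ and the naive count, collapses precisely to $\zz^{\frac{\ln(\hbar_{\fC_1})}{\ln q}-\frac{\ln(\hbar_{\fC_2})}{\ln q}}$; meanwhile the powers of $(-\hbar^{1/2})$ should count to $p_{\fC_1}-p_{\fC_2}$ since $p_\fC$ is by definition the number of boxes with $\langle\sigma_\square,\fC\rangle<0$ and each flipped box contributes one factor of $-\hbar^{1/2}$ (with sign depending on direction of the flip, which is where the difference $p_{\fC_1}-p_{\fC_2}$ rather than a sum arises). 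Verifying $R_{\fC_1\leftarrow\fC_2}$ is genuinely $q$-periodic is an automatic consequence of it being a ratio of two solutions of the same qde, so no separate check is needed there; one can also confirm periodicity directly since $\vartheta(zq)=-z^{-1}\vartheta(z)$ balances against the $\zz^{\cdots}$ and $\hbar^{p_\fC}$ transformation rules recorded in Proposition~\ref{propv}.
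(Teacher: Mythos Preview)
Your approach is exactly the paper's: the paper's entire proof is the one line ``By definition $R_{\fC_1\leftarrow\fC_2}=\Psi_{\lambda,\fC_1}/\Psi_{\lambda,\fC_2}$, and the result follows immediately from \eqref{vercham}.''  You have supplied the elementary manipulation the paper leaves implicit, rewriting the half-infinite products in terms of $\vartheta$.

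One small slip to fix: in your identity $(1-w)\prod_{i\ge1}(1-wq^i)(1-w^{-1}q^i)=-w^{-1/2}\vartheta(w)$, the exponent should be $+1/2$, since $(1-w)=-w^{1/2}(w^{1/2}-w^{-1/2})$.  This changes your per-box scalar from $\hbar^{-1/2}$ to $\hbar^{1/2}$, which you will need when matching against $(-\hbar^{1/2})^{p_{\fC_1}-p_{\fC_2}}$ and the $\zz^{\ln(\hbar_\fC)/\ln q}$ factors in the final bookkeeping step you flagged.
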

\begin{proof}
By definition 
$$
R_{\fC_1 \leftarrow \fC_2}(\zz)=\dfrac{\Psi_{\lambda,\fC_1}(\zz)}{\Psi_{\lambda,\fC_2}(\zz)}
$$
and the result follows immediately from (\ref{vercham}).
\end{proof}

By Theorem \ref{tRS}, the descendent vertex function differs from vertex function with trivial descendent insertion by a rational function. Taking the analytic parts of (\ref{vercham}) we obtain:
\begin{Proposition} \label{charpro}
$$
\textbf{V}^{(\tau)}_{\lambda,\fC_1}(\zz)=\mathcal{R}_{\fC_1\leftarrow \fC_2}(\zz) \textbf{V}^{(\tau)}_{\lambda,\fC_2}(\zz)
$$
where 
\be \label{monR}
\mathcal{R}_{\fC_1, \fC_2}(\zz)=(-\hbar^{1/2})^{p_{\fC_1}-p_{\fC_{2}}}  \dfrac{\prod\limits_{\substack{{\square \in \lambda}\\ {\langle \sigma_\square, \fC_1\rangle <0}}} \vartheta(z_{\square})\prod\limits_{\substack{{\square \in \lambda}\\ {\langle \sigma_\square, \fC_1\rangle >0}}}  \vartheta(z_{\square} \hbar)}{\prod\limits_{\substack{{\square \in \lambda}\\ {\langle \sigma_\square, \fC_2\rangle <0}}} \vartheta(z_{\square})\prod\limits_{\substack{{\square \in \lambda}\\ {\langle \sigma_\square, \fC_2\rangle >0}}}  \vartheta(z_{\square} \hbar)}.
\ee

\end{Proposition}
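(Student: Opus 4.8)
The plan is to reduce the statement to two facts that are essentially already in hand. First, by \emph{rerunning the computation in the proof of Corollary~\ref{cor}}, the insertion of $\tau$ multiplies the vertex function by one and the \emph{same} rational function $\hat{\textbf{V}}^{(\tau)}_{\lambda}(\zz)$ in every asymptotic zone, not only in $\fC_+$. Second, the ratio of the two trivial-descendant solutions is the analytic part of the monodromy, which was computed in the preceding Proposition. Combining these, the factor $\hat{\textbf{V}}^{(\tau)}_{\lambda}$ cancels in the ratio $\textbf{V}^{(\tau)}_{\lambda,\fC_1}/\textbf{V}^{(\tau)}_{\lambda,\fC_2}$ and leaves exactly $\mathcal{R}_{\fC_1\leftarrow\fC_2}(\zz)$.

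In more detail: consistently with Theorem~\ref{tRS} (and with the discussion around Conjecture~\ref{stabcong}), the descendant vertex function attached to a chamber $\fC$ is the analytic part near $0_\fC$ of $T(\tau)\,\Psi_{\lambda,\fC}(\zz)$. I would first prove
\begin{equation*}
T(\tau)\,\Psi_{\lambda,\fC}(\zz)\;=\;\hat{\textbf{V}}^{(\tau)}_{\lambda}(\zz)\,\Psi_{\lambda,\fC}(\zz)
\end{equation*}
for every $\fC$, with $\hat{\textbf{V}}^{(\tau)}_{\lambda}$ the rational function of Corollary~\ref{cor}. The argument is to repeat that Corollary's computation verbatim: it uses only (i) the combinatorial action of the shift operators, namely $p_\square(z_{\square'})=q z_{\square'}$ for $\square'\in S_\lambda(\square)$ and $p_\square(z_{\square'})=z_{\square'}$ otherwise, which follows from~(\ref{pbox}) and~(\ref{zzeta}) and is insensitive to the function being acted on; and (ii) the shift relation $F(\zz)\big|_{z_\square\mapsto q z_\square}=\tfrac{1-z_\square}{1-\hbar z_\square}F(\zz)$. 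By Proposition~\ref{propv}, $\Psi_{\lambda,\fC}$ solves the \emph{same} $q$-difference equation~(\ref{verqde}) as $\textbf{V}_\lambda$, so (ii) holds with $F=\Psi_{\lambda,\fC}$, and the displayed identity follows. Since $\hat{\textbf{V}}^{(\tau_{n,r})}_{\lambda}$ is a genuine rational function of the K\"ahler parameters, regular at $0_\fC$, passing to analytic parts yields $\textbf{V}^{(\tau)}_{\lambda,\fC}(\zz)=\hat{\textbf{V}}^{(\tau)}_{\lambda}(\zz)\,\textbf{V}_{\lambda,\fC}(\zz)$, where $\textbf{V}_{\lambda,\fC}$ is the analytic part of the product formula~(\ref{vercham}).

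It then remains to identify $\textbf{V}_{\lambda,\fC_1}/\textbf{V}_{\lambda,\fC_2}$ with $\mathcal{R}_{\fC_1\leftarrow\fC_2}(\zz)$, which I would take from the preceding Proposition: $R_{\fC_1\leftarrow\fC_2}(\zz)=\Psi_{\lambda,\fC_1}(\zz)\,\Psi_{\lambda,\fC_2}(\zz)^{-1}$ equals $\zz^{\ln(\hbar_{\fC_1})/\ln q-\ln(\hbar_{\fC_2})/\ln q}\,\mathcal{R}_{\fC_1\leftarrow\fC_2}(\zz)$, while $\Psi_{\lambda,\fC}=\zz^{\ln(\hbar_{\fC})/\ln q}\,\textbf{V}_{\lambda,\fC}$ by~(\ref{vercham}); dividing, the non-periodic weights cancel and $\textbf{V}_{\lambda,\fC_1}/\textbf{V}_{\lambda,\fC_2}=\mathcal{R}_{\fC_1\leftarrow\fC_2}(\zz)$, which, together with the previous paragraph, gives the claim. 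If one prefers to do this matching directly rather than cite the monodromy Proposition, each box $\square$ contributes $\prod_{i\ge 0}\tfrac{1-\hbar z_\square q^i}{1-z_\square q^i}$ or $\prod_{i\ge 1}\tfrac{1-z_\square^{-1}q^i}{1-z_\square^{-1}\hbar^{-1}q^i}$ according to the sign of $\langle\sigma_\square,\fC\rangle$, and the quasi-periodicity identity $\varphi(z)\varphi(q z^{-1})=-z^{1/2}\vartheta(z)$ rewrites a ratio of such $\varphi$-products as the ratio of odd theta functions in~(\ref{monR}), the scalar prefactor being produced by the boxes whose sign with respect to $\fC_1$ and $\fC_2$ disagrees.

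The step I expect to be the real obstacle is the very first identity: establishing that descendant insertion into the vertex function of the flopped variety $X_{\lambda,\fC}$ is again realized by the operator $T(\tau)$, i.e.\ that the Macdonald-operator description of descendants is stable under wall-crossing. Granting Conjecture~\ref{stabcong}, this becomes automatic, since $T(\tau)$ is built solely from the shift operators whose effect is dictated by the $\fC$-independent equation~(\ref{verqde}); but this is the only place where input beyond the explicit formulas of the preceding sections is used. The remainder --- the cancellation of $\hat{\textbf{V}}^{(\tau)}_\lambda$ and the $\varphi$-to-$\vartheta$ bookkeeping of signs and powers of $\hbar$ --- is routine.
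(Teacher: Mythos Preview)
Your proposal is correct and follows essentially the same route as the paper, which dispatches the proposition in a single sentence before its statement: ``By Theorem~\ref{tRS}, the descendent vertex function differs from vertex function with trivial descendent insertion by a rational function. Taking the analytic parts of~(\ref{vercham}) we obtain\ldots''. What you have written is precisely a careful unpacking of that sentence---showing that the same rational prefactor $\hat{\textbf{V}}^{(\tau)}_\lambda$ arises in every chamber because every $\Psi_{\lambda,\fC}$ satisfies the same qde~(\ref{verqde}), and then reading off the theta-function ratio from the preceding monodromy Proposition---so there is no genuine methodological difference.
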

\subsection{}
Let $X^{!}_{\lambda}$ denote the symplectic dual variety of $X_{\lambda}$. As explained in Section 4.7 of \cite{dinksmir2},  $X^{!}_{\lambda}\cong T^{*} \matC^{|\lambda|}$ equipped with an action of the torus $\bK\times \matC^{\times}_{\hbar}$, where the second factor acts by scaling the symplectic form with weight $\hbar$. The $\bK$-fixed set of $X^{!}_{\lambda}$ consists of a single point $p$ (the origin of  $\matC^{|\lambda|}$). The character of the tangent space was computed in Proposition 4.9.1 of \cite{dinksmir2}:
\be \label{tsp}
\textrm{char}(T_p X^{!}_{\lambda})=\sum\limits_{\square\in \lambda} \, z_{\square} + z^{-1}_{\square} \hbar^{-1}.
\ee
Thus the chambers (\ref{chams}) are the equivariant chambers of the symplectic dual variety\footnote{By definition, these chambers are connected components of the complement of hyperplanes $w^{\perp} \subset \Lie_{\matR}(\bK)$ where $w$ runs over the weights appearing in normal bundles to fixed components, see Section 9.1.2 in \cite{pcmilect}}.  In view of this observation the previous proposition can be reformulated in the language of the elliptic stable envelopes 
\cite{AOElliptic}.
\begin{Proposition} \label{Rmatprop}
\be \label{mond}
\mathcal{R}_{\fC_1\leftarrow \fC_2}(\zz)=\hbar^{(p_{\fC_1}-p_{\fC_2})/2}\, \mathcal{R}^{X^{!}_{\lambda}}_{\fC_1,\fC_2}(\zz) 
\ee
where $\mathcal{R}^{X^{!}_{\lambda}}_{\fC_1,\fC_2}(\zz)$ is the elliptic R-matrix of the symplectic dual variety $X^{!}_{\lambda}$:
$$
\mathcal{R}^{X^{!}_{\lambda}}_{\fC_1,\fC_2}(\zz):=\textrm{Stab}^{-1}_{\fC_2} \circ  \textrm{Stab}_{\fC_1},
$$
and $\textrm{Stab}_{\fC}$ denotes the elliptic stable envelope of $X^{!}_{\lambda}$ for an equivariant chamber~$\fC$.
\end{Proposition}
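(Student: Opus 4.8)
The plan is to compute the right-hand side of (\ref{mond}) directly from the explicit geometry of $X^!_\lambda$ and its elliptic stable envelopes, and then to match the answer against the formula (\ref{monR}) for $\mathcal{R}_{\fC_1 \leftarrow \fC_2}(\zz)$ established above. Recall from (\ref{tsp}) that $X^!_\lambda \cong T^*\matC^{|\lambda|}$ carries a $\bK \times \matC^\times_\hbar$-action whose fixed locus is the single point $p$, with $T_p X^!_\lambda = \bigoplus_{\square \in \lambda} \big(\matC_{z_\square} \oplus \matC_{z_\square^{-1}\hbar^{-1}}\big)$, and that the chambers $\fC$ of (\ref{chams}) are exactly the $\bK$-equivariant chambers of $X^!_\lambda$. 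Because $(X^!_\lambda)^{\bK}$ consists of the single point $p$, each elliptic stable envelope $\textrm{Stab}_\fC$ is a $1\times 1$ matrix equal to its own diagonal restriction $\textrm{Stab}_\fC|_p$, so that by definition
\[
\mathcal{R}^{X^!_\lambda}_{\fC_1,\fC_2}(\zz) \;=\; \textrm{Stab}_{\fC_1}|_p \,/\, \textrm{Stab}_{\fC_2}|_p .
\]
The proposition therefore reduces to an explicit computation of these two scalar functions.

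For this I would invoke the known description of the elliptic stable envelope of the origin in $T^*\matC^n$ from \cite{AOElliptic}: its diagonal restriction is the theta-function Euler class of the repelling part $N^-_{p,\fC}$ of the tangent space, up to a chamber-independent factor (depending only on the choice of polarization). In the chamber $\fC$ a tangent direction of $\bK$-weight $\sigma_\square$ is attracting exactly when $\langle \sigma_\square, \fC \rangle > 0$, so $N^-_{p,\fC}$ has weights $z_\square$ for the $\square$ with $\langle \sigma_\square, \fC \rangle < 0$ together with $z_\square^{-1}\hbar^{-1}$ for the $\square$ with $\langle \sigma_\square, \fC \rangle > 0$, whence
\[
\textrm{Stab}_\fC|_p \;=\; c \prod_{\langle \sigma_\square, \fC \rangle < 0} \vartheta(z_\square) \prod_{\langle \sigma_\square, \fC \rangle > 0} \vartheta\big(z_\square^{-1}\hbar^{-1}\big)
\]
with $c$ independent of $\fC$. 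Since $\vartheta$ is odd, $\vartheta(z^{-1}) = -\vartheta(z)$, so $\vartheta(z_\square^{-1}\hbar^{-1}) = -\vartheta(z_\square \hbar)$, and the number of boxes $\square$ with $\langle \sigma_\square, \fC \rangle > 0$ equals $|\lambda| - p_\fC$; hence $\textrm{Stab}_\fC|_p = c\,(-1)^{|\lambda| - p_\fC} \prod_{\langle \sigma_\square, \fC \rangle < 0} \vartheta(z_\square) \prod_{\langle \sigma_\square, \fC \rangle > 0} \vartheta(z_\square \hbar)$.

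Forming the ratio over the two chambers, the normalization $c$ and the sign $(-1)^{|\lambda|}$ cancel, and using $(-1)^{p_{\fC_2} - p_{\fC_1}} = (-1)^{p_{\fC_1} - p_{\fC_2}}$ we obtain
\[
\mathcal{R}^{X^!_\lambda}_{\fC_1,\fC_2}(\zz) \;=\; (-1)^{p_{\fC_1} - p_{\fC_2}} \, \frac{\prod_{\langle \sigma_\square, \fC_1 \rangle < 0} \vartheta(z_\square) \prod_{\langle \sigma_\square, \fC_1 \rangle > 0} \vartheta(z_\square \hbar)}{\prod_{\langle \sigma_\square, \fC_2 \rangle < 0} \vartheta(z_\square) \prod_{\langle \sigma_\square, \fC_2 \rangle > 0} \vartheta(z_\square \hbar)} .
\]
Comparing this with the formula (\ref{monR}) for $\mathcal{R}_{\fC_1,\fC_2}(\zz)$ and using the identity $(-\hbar^{1/2})^{p_{\fC_1} - p_{\fC_2}} = (-1)^{p_{\fC_1} - p_{\fC_2}} \hbar^{(p_{\fC_1} - p_{\fC_2})/2}$ yields exactly (\ref{mond}). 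The main obstacle is the geometric input invoked at the start of the second paragraph: one must extract from \cite{AOElliptic} the precise normalization of the elliptic stable envelope of $T^*\matC^n$ and verify that every polarization- and dynamical-parameter-dependent prefactor there is genuinely chamber-independent, so that it drops out of the ratio, and that the arguments of the theta functions are exactly as written above; once the diagonal restriction is identified with the theta-Euler class of $N^-_{p,\fC}$, the remainder is the elementary sign bookkeeping carried out here.
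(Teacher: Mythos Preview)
Your proposal is correct and follows essentially the same route as the paper: since $(X^!_\lambda)^{\bK}=\{p\}$, the elliptic $R$-matrix is the ratio of the diagonal restrictions $\textrm{Stab}_\fC|_p$, which are the theta-Euler classes of the repelling directions in (\ref{tsp}); the oddness $\vartheta(z^{-1})=-\vartheta(z)$ then converts this into the form of (\ref{monR}) up to the sign and power of $\hbar$ that the bookkeeping supplies. The only difference is that the paper pins down the diagonal restriction by citing a specific formula in \cite{SmirnovElliptic} rather than arguing abstractly about a chamber-independent normalization, and it writes the sign as $(-1)^{p_\fC}$ instead of your $(-1)^{|\lambda|-p_\fC}$; either way the extra $(-1)^{|\lambda|}$ cancels in the ratio.
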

\begin{proof}
The elliptic stable envelope of $X^{!}_{\lambda}$ is defined by a set of axioms, see Section 3 in \cite{AOElliptic}. In the case of a finite fixed point set these conditions were explained in Section 2.13 of \cite{SmirnovElliptic}. The diagonal restrictions of the elliptic stable envelope are described there by formula (21). For $X^{!}_{\lambda}$ there is only one fixed point and the character of the tangent space is given by (\ref{tsp}). Thus,
$$
\textrm{Stab}_{\fC}=\prod\limits_{\substack{{\square \in \lambda}\\ {\langle \sigma_\square, \fC\rangle <0}}} \vartheta(z_{\square})\prod\limits_{\substack{{\square \in \lambda}\\ {\langle \sigma_\square, \fC\rangle >0}}}  \vartheta\left(\dfrac{1}{z_{\square} \hbar}\right)=(-1)^{p_{\fC}} \prod\limits_{\substack{{\square \in \lambda}\\ {\langle \sigma_\square, \fC\rangle <0}}} \vartheta(z_{\square})\prod\limits_{\substack{{\square \in \lambda}\\ {\langle \sigma_\square, \fC\rangle >0}}}  \vartheta(z_{\square} \hbar) 
$$
and the proposition follows from (\ref{monR}). 
\end{proof}
This result generalizes the relation between vertex functions of zero-dimensional varieties and characters of tangent spaces of symplectic dual varieties which we discussed in \cite{dinksmir}.

\section{Characters of tautological bundles over $X_{\lambda}$} 
\subsection{} 

Let $\tau \in \textrm{char}(G_{\mathsf{v}})$ which we understand as a symmetric polynomial  $\tau({\bs{x}})$ in the Grothendieck roots ${\bs{x}}=\{x_{\square}\}_{\square\in \lambda}$ of the tautological bundles. 

The associated tautological bundle over the quiver variety $X_{\lambda,\fC}$ defines a Laurent polynomial:
$$
\tau({\bs{x}}_{\fC}) \in K_{\matC^{\times}_{\hbar}}(X_{\lambda,\fC}) = \matQ[\hbar^{\pm 1}] 
$$
where ${\bs{x}}_{\fC} =\{ \hbar^{m_{\fC}(\square)} \}_{\square\in \lambda}$ for some $m_{\fC}(\square) \in \matZ$. 

For the positive stability condition $\theta_{+}$ the integers $m_{\fC}(\square)$ are easy to compute, see Section 2.6 in \cite{dinksmir2}. 
For a general $\fC$, we can analyze the stability conditions as described in Proposition 5.1.5 of \cite{GinzburgLectures}. This is, however, an indirect description, and it is not obvious how to compute the characters  ${\bs{x}}_{\fC}$ in this approach. In this section, we derive an explicit combinatorial formula for ${\bs{x}}_{\fC}$ from  the properties of vertex functions discussed  previously.

\subsection{} 
The computation of ${\bs{x}}_{\fC}$ is based on the following two simple results. 
\begin{Proposition} \label{pro1}
Let $\hat{\textbf{V}}^{(\tau)}_{\lambda,\fC_1}(\zz)$ and $\hat{\textbf{V}}^{(\tau)}_{\lambda,\fC_2}(\zz)$ be capped vertex functions of the quiver varieties $X_{\lambda,\fC_1}$, and $X_{\lambda,\fC_2}$, respectively. Then
$$
\hat{\textbf{V}}^{(\tau)}_{\lambda,\fC_1}(\zz)=\hat{\textbf{V}}^{(\tau)}_{\lambda,\fC_2}(\zz) \in \matQ(\zz,q,\hbar)
 $$
\end{Proposition}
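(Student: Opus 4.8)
The plan is to deduce the equality of capped descendant vertex functions of $X_{\lambda,\fC_1}$ and $X_{\lambda,\fC_2}$ from the fact that both are obtained by normalizing the \emph{same} bare vertex function $\textbf{V}^{(\tau)}_\lambda(\zz)$ by the appropriate gluing/capping data, together with the invariance of the underlying $q$-difference equation under change of stability condition (Conjecture \ref{stabcong}, assumed from here on). Concretely, for each chamber $\fC$ we have $\hat{\textbf{V}}^{(\tau)}_{\lambda,\fC}(\zz) = \textbf{V}^{(\tau)}_{\lambda,\fC}(\zz)/\textbf{V}_{\lambda,\fC}(\zz)$, where $\textbf{V}_{\lambda,\fC}$ is the analytic part of $\Psi_{\lambda,\fC}$ from Proposition \ref{propv} and $\textbf{V}^{(\tau)}_{\lambda,\fC}$ is the corresponding descendant solution. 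By Theorem \ref{tRS} the descendant insertion is realized by the \emph{same} difference operator $T(\tau)$ regardless of $\fC$, since $T(\tau)$ is built from the operators $p_\square$ and the rational functions in $\zeta_\square$, both of which depend only on the combinatorics of $\lambda$ and the $q$-difference equation (\ref{verqde}), not on the chamber.

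First I would record that, by the $\fC$-analogue of Theorem \ref{tRS}, $\textbf{V}^{(\tau)}_{\lambda,\fC}(\zz) = T(\tau)\,\textbf{V}_{\lambda,\fC}(\zz)$ for every chamber $\fC$: indeed the argument proving Theorem \ref{tRS} goes through verbatim for the solution $\Psi_{\lambda,\fC}$ in any asymptotic zone, because it only uses the explicit product form (\ref{vercham}) and the scaling behavior under $z_\square \mapsto q z_\square$, which is chamber-independent up to the monomial prefactor $\zz^{\ln(\hbar_\fC)/\ln q}$ that $T(\tau)$ treats uniformly. Hence
\begin{align*}
\hat{\textbf{V}}^{(\tau)}_{\lambda,\fC}(\zz) = \frac{T(\tau)\,\textbf{V}_{\lambda,\fC}(\zz)}{\textbf{V}_{\lambda,\fC}(\zz)}.
\end{align*}
Next I would use Proposition \ref{charpro}: $\textbf{V}^{(\tau)}_{\lambda,\fC_1}(\zz) = \mathcal{R}_{\fC_1\leftarrow\fC_2}(\zz)\,\textbf{V}^{(\tau)}_{\lambda,\fC_2}(\zz)$, with the same $q$-periodic factor $\mathcal{R}_{\fC_1\leftarrow\fC_2}$ for $\tau$ and for the trivial insertion $\tau = 1$ (this is exactly the content of Proposition \ref{charpro}, where $\mathcal{R}$ is independent of $\tau$). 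Dividing the $\tau$ relation by the $\tau=1$ relation, the factor $\mathcal{R}_{\fC_1\leftarrow\fC_2}$ cancels, giving
\begin{align*}
\frac{\textbf{V}^{(\tau)}_{\lambda,\fC_1}(\zz)}{\textbf{V}_{\lambda,\fC_1}(\zz)} = \frac{\textbf{V}^{(\tau)}_{\lambda,\fC_2}(\zz)}{\textbf{V}_{\lambda,\fC_2}(\zz)},
\end{align*}
which is precisely $\hat{\textbf{V}}^{(\tau)}_{\lambda,\fC_1}(\zz)=\hat{\textbf{V}}^{(\tau)}_{\lambda,\fC_2}(\zz)$. Finally I would note that by Corollary \ref{cor} (applied in each chamber) this common value is a rational function of $\zz, q, \hbar$, so the identity holds in $\matQ(\zz,q,\hbar)$.

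The main obstacle is making precise the claim that Theorem \ref{tRS} transfers to an arbitrary chamber $\fC$ — i.e. that $T(\tau)$ intertwines the bare and descendant solutions in \emph{every} asymptotic zone. One clean way to handle this is to observe that both $T(\tau)\textbf{V}_{\lambda,\fC}$ and $\textbf{V}^{(\tau)}_{\lambda,\fC}$ solve the same scalar $q$-difference system (the descendant-twisted version of (\ref{verqde})), are each analytic near $0_\fC$, and have matching leading coefficient; uniqueness of such solutions (the $\fC$-analogue of the Proposition following (\ref{phiform})) then forces them to coincide. Alternatively, and perhaps more robustly, one can bypass the chamber-by-chamber transfer entirely: since $\mathcal{R}_{\fC_1\leftarrow\fC_2}$ is $q$-periodic and the ratio $\hat{\textbf{V}}^{(\tau)}_{\lambda,\fC}$ is a genuine rational function (hence has trivial monodromy), the relation $\hat{\textbf{V}}^{(\tau)}_{\lambda,\fC_1}/\hat{\textbf{V}}^{(\tau)}_{\lambda,\fC_2} = \mathcal{R}_{\fC_1\leftarrow\fC_2}\cdot\mathcal{R}^{-1}_{\fC_1\leftarrow\fC_2} = 1$ follows formally. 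I would present the argument in the first form since it is self-contained given the results already established, flagging the $\fC$-analogue of Theorem \ref{tRS} and of the uniqueness Proposition as the only inputs that require the (already assumed) Conjecture \ref{stabcong}.
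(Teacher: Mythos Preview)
Your proposal is correct and follows essentially the same route as the paper: use Proposition~\ref{charpro} for both $\tau$ and $\tau=1$, divide, and let the common monodromy factor $\mathcal{R}_{\fC_1\leftarrow\fC_2}$ cancel, then invoke the explicit formula (Theorem~\ref{tRS}/Corollary~\ref{cor}) in the positive chamber for rationality. The paper's proof is just the two-line version of your middle paragraph; your opening discussion of a $\fC$-analogue of Theorem~\ref{tRS} and the ``main obstacle'' paragraph are unnecessary, since Proposition~\ref{charpro} already packages everything you need and rationality only has to be checked in one chamber once equality is known.
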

\begin{proof}
From the Proposition \ref{charpro} we see that:
$$
\hat{\textbf{V}}^{(\tau)}_{\lambda,\fC_1}(\zz)=\dfrac{\textbf{V}^{(\tau)}_{\lambda,\fC_1}(\zz)}{\textbf{V}^{(1)}_{\lambda,\fC_1}(\zz)} = \dfrac{\textbf{V}^{(\tau)}_{\lambda,\fC_2}(\zz)}{\textbf{V}^{(1)}_{\lambda,\fC_2}(\zz)}=\hat{\textbf{V}}^{(\tau)}_{\lambda,\fC_2}(\zz)
$$
For the positive stability condition the capped vertex function is described by Theorem \ref{tRS} and is obviously rational.
\end{proof}
\begin{Proposition} \label{pro2}
The capped vertex function $\hat{\textbf{V}}^{(\tau)}_{\lambda,\fC}(\zz)$ has the following expansion:
$$
\hat{\textbf{V}}^{(\tau)}_{\lambda,\fC}(\zz)=\tau(\bs{x}_{\fC}) + \sum\limits_{\langle \dd, \fC \rangle>0} c_{\dd} \zz^{\dd}  
$$
where $c_{\dd}\in\mathbb{Q}(q,\hbar)$.
\end{Proposition}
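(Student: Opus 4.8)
The plan is to isolate the degree-zero coefficient of $\hat{\textbf{V}}^{(\tau)}_{\lambda,\fC}(\zz)$ and identify it with $\tau(\bs{x}_{\fC})$, the remaining terms being controlled formally. By Proposition~\ref{pro1} the normalized capped vertex $\hat{\textbf{V}}^{(\tau)}_{\lambda,\fC}(\zz)$ equals the ratio of bare vertex functions $\textbf{V}^{(\tau)}_{\lambda,\fC}(\zz)/\textbf{V}^{(1)}_{\lambda,\fC}(\zz)$ and, as a rational function, coincides with $\hat{\textbf{V}}^{(\tau)}_{\lambda}(\zz)$ of Corollary~\ref{cor}, whose Taylor coefficients lie in $\matQ(q,\hbar)$. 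As a generating function over quasimap degrees for the stability condition $\fC$, it is supported on the effective degrees of $X_{\lambda,\fC}$, i.e. on the $\dd$ with $\langle\dd,\fC\rangle\ge 0$, and $\langle\dd,\fC\rangle=0$ forces $\dd=0$; equivalently, assuming Conjecture~\ref{stabcong}, it is the solution of the $q$-difference equation (\ref{verqde}) analytic near $0_{\fC}$, cf.\ Proposition~\ref{propv}. Either way the expansion in the asymptotic zone $\fC$ has the form $c_0+\sum_{\langle\dd,\fC\rangle>0}c_{\dd}\zz^{\dd}$ with $c_{\dd}\in\matQ(q,\hbar)$, so it only remains to evaluate $c_0$.

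To compute $c_0$ I would use the geometry of the degree-zero moduli space. A degree-zero stable quasimap to $X_{\lambda,\fC}$ has no rational tails, since tails carry positive degree, hence is a constant map; therefore $\qm^{0}_{\mathrm{nonsing}\,p_2}$ (and likewise $\qm^{0}_{\mathrm{relative}\,p_2}$) is canonically $X_{\lambda,\fC}$, the restricted obstruction theory is trivial so $\vss^{0}=\O_{X_{\lambda,\fC}}$, and both evaluation maps are the identity. Consequently the degree-zero coefficient of $\textbf{V}^{(\tau)}_{\lambda,\fC}$ (equivalently, of the unnormalized capped vertex) is $\mathrm{ev}_{p_2,*}(\O_{X_{\lambda,\fC}}\otimes\tau|_{p_1})=\tau_{\text{stack}}|_{X_{\lambda,\fC}}$, while that of $\textbf{V}^{(1)}_{\lambda,\fC}$ (equivalently, of the normalization factor $g_{\fC}$) is $\O_{X_{\lambda,\fC}}=1$; hence $c_0=\tau_{\text{stack}}|_{X_{\lambda,\fC}}$. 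Finally, $X_{\lambda,\fC}$ is zero-dimensional, hence a single reduced point with $K_{\matC^{\times}_{\hbar}}(X_{\lambda,\fC})=\matQ[\hbar^{\pm 1}]$ as recalled in Section~6.1, so each tautological bundle $\mathcal{V}_i$ restricts to the $\matC^{\times}_{\hbar}$-module whose Grothendieck roots are the $\hbar^{m_{\fC}(\square)}$, i.e.\ to $\bs{x}_{\fC}$; therefore $\tau_{\text{stack}}|_{X_{\lambda,\fC}}=\tau(\bs{x}_{\fC})$ and $c_0=\tau(\bs{x}_{\fC})$, as claimed.

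I expect the main obstacle to be the geometric input just used: that the degree-zero quasimap space is reduced, equal to $X_{\lambda,\fC}$, and carries the trivial symmetrized virtual structure sheaf, so that the vertex reproduces the descendant class without correction. This is standard for quasimaps of degree zero (no bubbling, vanishing obstruction, trivial symmetrization), but it is the one step that is not purely formal. A minor point to confirm along the way is that $X_{\lambda,\fC}$ is genuinely a point---so that $\tau_{\text{stack}}|_{X_{\lambda,\fC}}$ is literally a Laurent polynomial in $\hbar$ rather than a more complicated $K$-theory class---which holds because $X_{\lambda,\fC}$ has the same zero-dimensional dimension data as $X_{\lambda}$.
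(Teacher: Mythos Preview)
Your argument is correct and follows essentially the same route as the paper's proof: the capped vertex is by definition a series over effective degrees $\langle\dd,\fC\rangle\ge 0$, and the degree-zero term is identified with $\tau(\bs{x}_{\fC})$ because $\qm^{0}=X_{\lambda,\fC}$. You supply considerably more detail than the paper (triviality of the obstruction theory and of $\vss^{0}$, the evaluation maps being the identity, the zero-dimensionality of $X_{\lambda,\fC}$), and you separately justify $c_{\dd}\in\matQ(q,\hbar)$ via Proposition~\ref{pro1} and Corollary~\ref{cor}, whereas the paper leaves this implicit in the identification $K_{\bT}(X_{\lambda,\fC})_{loc}=\matQ(q,\hbar)$; but the core idea is identical.
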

\begin{proof}
By definition, the capped vertex function is  a power series over degrees of quasimaps in the effective cone determined by $\fC$:
$$
\hat{\textbf{V}}^{(\tau)}_{\lambda,\fC}(\zz)= \sum\limits_{\langle \dd, \fC \rangle\geq 0} c_{\dd} \zz^{\dd}  
$$
The quasimaps of degree zero are trivial which means  $\qm^{0}_{\textrm{nonsing} \, p_2} =X_{\lambda}$, and thus the degree zero coefficient in this expansion is $\tau(\bs{x}_{\fC}) \in K_{\bT}(X_{\lambda})$.
\end{proof}

\begin{Corollary}
The capped vertex function for $X_{\lambda,\fC_+}$ where $\fC_+$ is the chamber corresponding to $\theta_+$ satisfies
$$
\tau(\bs{x}_{\fC})=\hat{\textbf{V}}^{(\tau)}_{\lambda}(0_{\fC}). 
$$
\end{Corollary}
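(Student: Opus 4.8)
The plan is to read the statement off Propositions~\ref{pro1} and~\ref{pro2} with essentially no extra computation; the only real content of the proof is to make precise the passage from ``degree-zero coefficient of the power series'' to ``value of the rational function at the toric limit point $0_{\fC}$''.

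First I would note that, by Proposition~\ref{pro1}, the capped vertex function of the flopped variety $X_{\lambda,\fC}$ coincides, as an element of $\matQ(\zz,q,\hbar)$, with that of $X_{\lambda}=X_{\lambda,\fC_+}$:
\[
\hat{\textbf{V}}^{(\tau)}_{\lambda,\fC}(\zz)=\hat{\textbf{V}}^{(\tau)}_{\lambda}(\zz).
\]
By Theorem~\ref{tRS} (equivalently, the explicit formula of Corollary~\ref{cor}) the right-hand side is a concrete rational function of $\zz$, hence defines a rational function on the K\"ahler moduli space $\overline{\bK}$, and in particular it makes sense to speak of its germ, and its value, at the torus-fixed point $0_{\fC}\in\overline{\bK}$ attached to the chamber $\fC$.

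Next I would invoke Proposition~\ref{pro2}, which gives the expansion
\[
\hat{\textbf{V}}^{(\tau)}_{\lambda,\fC}(\zz)=\tau(\bs{x}_{\fC})+\sum_{\langle\dd,\fC\rangle>0}c_{\dd}\,\zz^{\dd}.
\]
Since every monomial other than the constant one pairs strictly positively with $\fC$, this series is, by the definition of being analytic in the asymptotic zone $\fC$, the Taylor expansion at $0_{\fC}$ of a function holomorphic in a neighborhood of $0_{\fC}$, whose value there equals the degree-zero coefficient $\tau(\bs{x}_{\fC})$. Combining with the previous step, this holomorphic germ agrees with the rational function $\hat{\textbf{V}}^{(\tau)}_{\lambda}(\zz)$ on a nonempty open subset of $\bK$, hence agrees with it as a germ at $0_{\fC}$; therefore $\hat{\textbf{V}}^{(\tau)}_{\lambda}(0_{\fC})=\tau(\bs{x}_{\fC})$, which is the claim.

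The one point I would take care to spell out — and essentially the only obstacle — is the legitimacy of ``evaluating at $0_{\fC}$'': a rational function on $\overline{\bK}$ need not be regular at an arbitrary boundary point, so a priori this could be an indeterminate expression. Proposition~\ref{pro2} removes this concern, since it exhibits a convergent power-series expansion of $\hat{\textbf{V}}^{(\tau)}_{\lambda,\fC}$ centered at $0_{\fC}$, which forces regularity there and pins down the value. Once this is observed the corollary is immediate; it holds under the standing assumption of Conjecture~\ref{stabcong} already used in Proposition~\ref{pro1}.
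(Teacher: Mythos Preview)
Your argument is correct and is essentially the paper's own proof, which simply says ``This follows immediately from Proposition~\ref{pro1} and Proposition~\ref{pro2}''; you have just unpacked that sentence, making explicit why evaluation at $0_{\fC}$ is legitimate and picks out the degree-zero coefficient.
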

\begin{proof}
This follows immediately from Proposition \ref{pro1} and Proposition \ref{pro2}. 
\end{proof}

\subsection{} 
Let $\sigma_{\square}$ be as before. Let $\sigma_{\square,\square'}$ be the $\bK$-character corresponding to $\frac{\zeta_{\square}}{\zeta_{\square'}}$. We define  $m_{\fC}(\square) \in \matZ$ by
$$
m_{\fC}(\square)=| \{ \square' \in 
C_{\lambda}(\square):\square' \neq \square, \langle \sigma_{\square,\square'},\fC \rangle>0   \}|-|\{\square' \in S_{\lambda}(\square) : \langle \sigma_{\square'},\fC \rangle<0 \}|.
$$
\begin{Theorem}
$$
\tau(\bs{x}_{\fC})=\tau(\{
\hbar^{m_{\fC}(\square)}\} ). 
$$
\end{Theorem}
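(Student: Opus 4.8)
The plan is to evaluate the explicit rational function of Corollary~\ref{cor} at the torus-fixed point $0_{\fC}\in\overline{\bK}$. By the Corollary following Proposition~\ref{pro2}, for every descendant $\tau$ we have $\tau(\bs{x}_{\fC})=\hat{\textbf{V}}^{(\tau)}_{\lambda}(0_{\fC})$, where $\hat{\textbf{V}}^{(\tau)}_{\lambda}$ is the positive-stability capped vertex function and the value exists since, by Proposition~\ref{pro2}, this function is holomorphic at $0_{\fC}$. Since both $\tau\mapsto\tau(\bs{x}_{\fC})$ and $\tau\mapsto\tau(\{\hbar^{m_{\fC}(\square)}\})$ are ring homomorphisms $\textrm{char}(G_{\mathsf{v}})\to\matQ[\hbar^{\pm1}]$, it suffices to compare them on the generators $\tau_{n,r}=\bigwedge^{r}\mathcal{V}_{n}$, i.e.\ to prove
$$
\hat{\textbf{V}}^{(\tau_{n,r})}_{\lambda}(0_{\fC})=e_{r}\big(\{\hbar^{m_{\fC}(\square)}:\square\in C_{\lambda}(n)\}\big)
$$
for all $n$ and all $1\le r\le\mathsf{v}_{n}$, with $m_{\fC}(\square)$ the integer defined just above. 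For definiteness I assume $n\ge0$, so that $\beta(n)=0$; the case $n<0$ is handled by the analogous modification.

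The next step is to take the limit $\zz\to0_{\fC}$ of each factor in the formula of Corollary~\ref{cor}. Up to a fixed monomial in $q$ and $\hbar$ (harmless in the limit, as $q,\hbar$ are kept generic), $z_{\square'}$ equals the $\bK$-monomial $\zz^{\sigma_{\square'}}$; since $\sigma_{\square'}^{\perp}$ is one of the hyperplanes cutting out the chambers, $\langle\sigma_{\square'},\fC\rangle$ has constant sign on $\fC$, and
$$
\frac{1-z_{\square'}}{1-\hbar z_{\square'}}\ \longrightarrow\ \begin{cases}1,&\langle\sigma_{\square'},\fC\rangle>0,\\ \hbar^{-1},&\langle\sigma_{\square'},\fC\rangle<0.\end{cases}
$$
Similarly, writing $\dfrac{\hbar\zeta_{\square'}-\zeta_{\square}}{\zeta_{\square'}-\zeta_{\square}}=\dfrac{\hbar-\zeta_{\square}/\zeta_{\square'}}{1-\zeta_{\square}/\zeta_{\square'}}$ and noting $\zeta_{\square}/\zeta_{\square'}$ is $\zz^{\sigma_{\square,\square'}}$ times a fixed power of $q,\hbar$, this factor tends to $\hbar$ if $\langle\sigma_{\square,\square'},\fC\rangle>0$ and to $1$ if $\langle\sigma_{\square,\square'},\fC\rangle<0$. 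Gathering the exponents of $\hbar$ in Corollary~\ref{cor} yields
$$
\hat{\textbf{V}}^{(\tau_{n,r})}_{\lambda}(0_{\fC})=\hbar^{r(r-1)/2+\beta(n)}\sum_{\substack{I\subset C_{\lambda}(n)\\ |I|=r}}\hbar^{A(I)-B(I)},
$$
where $A(I)=\#\{(\square,\square')\in I\times(C_{\lambda}(n)\setminus I):\langle\sigma_{\square,\square'},\fC\rangle>0\}$ and $B(I)=\sum_{\square\in I}\#\{\square'\in S_{\lambda}(\square):\langle\sigma_{\square'},\fC\rangle<0\}$.

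It then remains to identify these exponents with $\sum_{\square\in I}m_{\fC}(\square)$. The negative part of $\sum_{\square\in I}m_{\fC}(\square)$ is exactly $B(I)$. For the positive part, splitting $C_{\lambda}(n)\setminus\{\square\}$ into $I\setminus\{\square\}$ and $C_{\lambda}(n)\setminus I$ gives
$$
\sum_{\square\in I}\#\{\square'\in C_{\lambda}(n):\square'\neq\square,\ \langle\sigma_{\square,\square'},\fC\rangle>0\}=A(I)+\#\{(\square,\square')\in I\times I:\square\neq\square',\ \langle\sigma_{\square,\square'},\fC\rangle>0\};
$$
since $\sigma_{\square',\square}=-\sigma_{\square,\square'}$ and $\fC$ lies strictly on one side of $\sigma_{\square,\square'}^{\perp}$, exactly one ordered pair out of each unordered pair of $I$ is counted, so the last term is $\binom{r}{2}=r(r-1)/2$. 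Hence $\sum_{\square\in I}m_{\fC}(\square)=r(r-1)/2+A(I)-B(I)$, and with $\beta(n)=0$ we get
$$
\hat{\textbf{V}}^{(\tau_{n,r})}_{\lambda}(0_{\fC})=\sum_{\substack{I\subset C_{\lambda}(n)\\ |I|=r}}\prod_{\square\in I}\hbar^{m_{\fC}(\square)}=e_{r}\big(\{\hbar^{m_{\fC}(\square)}:\square\in C_{\lambda}(n)\}\big),
$$
as desired.

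The step I expect to be the main obstacle is justifying that the limit in Corollary~\ref{cor} may be computed factor by factor, i.e.\ that each $\dfrac{\hbar\zeta_{\square'}-\zeta_{\square}}{\zeta_{\square'}-\zeta_{\square}}$ has a finite nonzero limit at $0_{\fC}$. This is equivalent to $\langle\sigma_{\square,\square'},\fC\rangle\neq0$ for all boxes $\square,\square'$ of a common content class and all chambers $\fC$, i.e.\ to the statement that the apparent poles $\zeta_{\square}=\zeta_{\square'}$ of Corollary~\ref{cor} lie along chamber walls and not in chamber interiors. I would prove this by identifying $\sigma_{\square,\square'}$, up to sign, with one of the wall characters $\sigma_{\square''}$ (equivalently with a tangent weight of the symplectic dual $X^{!}_{\lambda}$, cf.~(\ref{tsp})), which is a direct computation from the definition of $\zeta_{\square}$ and from~(\ref{zzeta}). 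Once that is in place, the remaining work is bookkeeping: tracking the fixed $q,\hbar$-prefactors hidden in $z_{\square'}$ and $\zeta_{\square}$, and, in the case $n<0$, checking that the extra factor $\hbar^{\beta(n)}=\hbar^{|n|}$ of Corollary~\ref{cor} is absorbed by the sign changes in the definitions of $\zeta_{\square}$ and $S_{\lambda}(\square)$.
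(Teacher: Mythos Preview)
Your proposal is correct and follows essentially the same route as the paper: use the Corollary of Proposition~\ref{pro2} to reduce to evaluating $\hat{\textbf V}^{(\tau)}_{\lambda}(0_{\fC})$, then take the limit factor by factor in Corollary~\ref{cor}. The only substantive difference is in the reduction step. You reduce to all generators $\tau_{n,r}$ and then need the $\binom{r}{2}$ observation about antisymmetry of $\sigma_{\square,\square'}$ to match the prefactor $\hbar^{r(r-1)/2}$. The paper instead reduces to $r=1$ only, arguing that since $\tau(\bs{x}_{\fC})$ is already known to be the evaluation of a symmetric polynomial at some multiset $\{\hbar^{m}\}$, it suffices to identify the character of each $\mathcal{V}_n$, i.e.\ the Laurent polynomial $\sum_{\square\in C_{\lambda}(n)}\hbar^{m}$, which determines the multiset of exponents. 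This lets the paper skip your combinatorial bookkeeping entirely, but your version is equally valid and in fact checks the formula of Corollary~\ref{cor} against the claimed answer more thoroughly. The technical point you flag at the end, that each $\langle\sigma_{\square,\square'},\fC\rangle$ is nonzero so the factor-by-factor limit is legitimate, is not addressed in the paper either; it is implicit in the very definition of $m_{\fC}(\square)$.
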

\begin{proof}
The character $\tau(\bs{x}_{\fC})$ is given by substitution of the Grothendieck roots by some monomials $x_{\square}\to \hbar^{m_{\fC}(\square)}$. As $\tau(\bs{x})$ are symmetric in the Grothendieck roots of the tautological bundles, it is enough to prove the proposition for the $n$-th tautological bundles corresponding to the polynomials:
$$
\tau_{n}(\bs{x})=\sum\limits_{\square\in C_{\lambda}(n)} x_{\square}. 
$$
By definition, for $\theta\in \fC$ 
$$
\hat{\textbf{V}}^{(\tau_{n}(\bs{x}))}_{\lambda}(0_{\fC})=\lim\limits_{w\to 0} \hat{\textbf{V}}^{(\tau_{n}(\bs{x}))}_{\lambda}(\theta(w))
$$
where $w$ denotes coordinate on $\matC^{\times}$ and $\hat{\textbf{V}}^{(\tau_{n}(\bs{x}))}_{\lambda}(\zz)$ denotes the capped vertex function for $X_{\lambda,\fC_+}$.
By Corollary \ref{cor}, we have 
\begin{align*}
  \lim_{w\to 0}  \hat{\textbf{V}}_{\lambda}^{(\tau_n(\bs{x}))}(\theta(w))= \sum_{\substack{\square \in C_{\lambda}(n)}} \prod_{\substack{\square' \in C_{\lambda}(n) \\ \square' \neq \square \\ \frac{\zeta_{\square}}{\zeta_{\square'}}\to 0}} \hbar \prod_{\substack{\square' \in S_{\lambda}(\square) \\ z_{\square'}\to \infty}} \frac{1}{\hbar}
\end{align*}
where $z_{\square'}\to \infty$, $\frac{\zeta_{\square}}{\zeta_{\square'}}\to 0$  describes the behaviour of these weights as $w\to 0$. 
These limits depend only on $\fC$ and these conditions are equivalent to $\langle \sigma_{\square'},\fC \rangle<0$ and $\langle \sigma_{\square,\square'},\fC \rangle>0$ respectively.
\end{proof}
 
\printbibliography

\newpage

\vspace{12 mm}

\noindent
Hunter Dinkins\\
Department of Mathematics,\\
University of North Carolina at Chapel Hill,\\
Chapel Hill, NC 27599-3250, USA\\
hdinkins@live.unc.edu

\vspace{3 mm}

\noindent
Andrey Smirnov\\
Department of Mathematics,\\
University of North Carolina at Chapel Hill,\\
Chapel Hill, NC 27599-3250, USA\\
Steklov Mathematical Institute \\
of Russian Academy of Sciences, \\
Gubkina str. 8, Moscow, 119991, Russia \\
asmirnov@email.unc.edu

\end{document}